\documentclass{amsart}


\usepackage{comment}

\usepackage[all]{xy}   

\usepackage{hyperref} 

\usepackage{amsmath,amssymb,amsthm, amsfonts, dsfont}
\usepackage[usenames,dvipsnames]{xcolor}
\usepackage{tikz, mathtools,listings}


\definecolor{mygreen}{rgb}{0,0.3,0}
\definecolor{mygray}{rgb}{0.5,0.5,0.5}
\definecolor{mymauve}{rgb}{0.58,0,0.82}

\lstset{ %
  backgroundcolor=\color{white},   
  basicstyle=\footnotesize,        
  breaklines=true,                 
  captionpos=b,                    
  commentstyle=\color{mygreen},    
  escapeinside={\%*}{*)},          
  keywordstyle=\color{blue},       
  stringstyle=\color{mymauve},     
}


\theoremstyle{plain}
\newtheorem{lem}{Lemma}[section]
\newtheorem{cor}[lem]{Corollary}
\newtheorem{prop}[lem]{Proposition}
\newtheorem{thm}[lem]{Theorem}

\theoremstyle{definition}
\newtheorem{ex}[lem]{Example}
\newtheorem{rem}[lem]{Remark}
\newtheorem{dfn}[lem]{Definition}


\newcommand{\edge}[1]{\ar@{-}[#1]}      
\newcommand{\lulab}[1]{\ar@{}[l]_<<{{}_{#1}}}   
\newcommand{\node}{*=0{\bullet}}                       


\newcommand{\zz}{\mathbb{Z}}    
\newcommand{\qq}{\mathbb{Q}}    
\newcommand{\rr}{\mathbb{R}}     
\newcommand{\cc}{\mathbb{C}}   

\newcommand{\HH}{\mathcal{H}}   

\newcommand{\RS}{\Phi}    
\newcommand{\SR}{\Delta}   

\newcommand{\cl}{\Lambda}  
\newcommand{\rl}{\cl_r}  
\newcommand{\wl}{\cl_w}  

\newcommand{\al}{\alpha}   
\newcommand{\be}{\beta}    
\newcommand{\de}{\delta}   
\newcommand{\la}{\lambda} 

\newcommand{\hra}{\hookrightarrow} 

\newcommand{\tA}{\mathrm{A}}  
\newcommand{\tB}{\mathrm{B}}  
\newcommand{\tC}{\mathrm{C}}  
\newcommand{\tD}{\mathrm{D}}  
\newcommand{\tE}{\mathrm{E}}  
\newcommand{\tF}{\mathrm{F}}  
\newcommand{\tG}{\mathrm{G}}  
\newcommand{\tH}{\mathrm{H}} 
\newcommand{\tI}{\mathrm{I}}    

\DeclareMathOperator{\Hom}{\mathrm{Hom}}    
\newcommand{\om}{\omega}  
\newcommand{\DD}{\mathcal{D}}    
\newcommand{\id}{\mathrm{id}}   
\newcommand{\OF}{\zz_{\RS}} 

\DeclareMathOperator{\GL}{\mathrm{GL}}  
\DeclareMathOperator{\SL}{\mathrm{SL}}
\DeclareMathOperator{\PGL}{\mathrm{PGL}}


\newcommand{\NN}{\mathcal{N}} 
\DeclareMathOperator{\Sym}{\mathrm{Sym}}    
\newcommand{\NH}{\mathfrak{N}} 
\newcommand{\xra}[1]{\xrightarrow{#1}} 
\DeclareMathOperator{\End}{\mathrm{End}}    


\newcommand{\DI}{\mathit{\Delta}} 
\newcommand{\eps}{\epsilon} 
\newcommand{\veps}{\varepsilon} 
\newcommand{\II}{\mathcal{I}} 
\newcommand{\ii}{\mathbf{i}} 
\newcommand{\jj}{\mathbf{j}}
\newcommand{\kkk}{\mathbf{k}}


\newcommand{\kk}{\mathds{k}} 
\DeclareMathOperator{\SC}{\mathrm{H}} 
\DeclareMathOperator{\CH}{\mathrm{CH}} 
\DeclareMathOperator{\image}{\mathrm{Im}}


\DeclareMathOperator{\K}{\mathrm{K}_0} 
\DeclareMathOperator{\CK}{\mathrm{CK}} 
\DeclareMathOperator{\hh}{\mathcal{A}}  
\DeclareMathOperator{\AC}{\mathrm{\Omega}} 
\newcommand{\LL}{\mathbb{L}}      
\newcommand{\Ll}{\mathcal{L}}   

\DeclareMathOperator{\MM}{\mathcal{M}} 
\newcommand{\OO}{\mathcal{O}} 
\newcommand{\SP}{\mathcal{S}} 
\newcommand{\DF}{\mathfrak{D}}


\begin{document}

\title[Nil-Hecke rings and the Schubert calculus]{Nil-Hecke rings and the Schubert calculus}

\author[E.~Richmond, K.~Zainoulline]{Edward Richmond, Kirill Zainoulline}

\address[Edward Richmond]{Department of Mathematics, Oklahoma State University, Stillwater, OK 74078, USA}
\email{edward.richmond@okstate.edu}
\urladdr{https://math.okstate.edu/people/erichmond/}

\address[Kirill Zainoulline]{Department of Mathematics and Statistics, University of Ottawa, 150 Louis-Pasteur, Ottawa, ON, K1N 6N5, Canada}
\email{kirill@uottawa.ca}
\urladdr{https://mysite.science.uottawa.ca/kzaynull/}

\subjclass[2010]{}
\keywords{equivariant cohomology, Schubert calculus, nil-Hecke algebra}

\begin{abstract}
The purpose of the present notes is to give a self-contained exposition
on the use of the techniques of Nil-Hecke algebras in the localization approach to the equivariant Schubert calculus
for cohomology of flag varieties. We also demonstrate how this techniques can be applied to non-crystallographic root systems
as well as to study (connective) $K$-theory of flag varieties.
\end{abstract}

\maketitle


\tableofcontents

\section{Introduction}

Hecke-type algebras and the associated localization techniques play an essential role in the modern equivariant Schubert calculus. It was first observed by Demazure in~\cite{Dem73} and then by Konstant and Kumar in \cite{KK86, KK90} that the algebras of divided-difference operators (nil-Hecke and $0$-Hecke algebras) can be efficiently used to describe and study (equivariant) cohomology and $K$-theory rings of flag varieties and Kac-Moody groups. Later this approach was applied to many other generalized cohomology theories (e.g., to algebraic and complex oriented cobordism in \cite{BE87,BE90} and \cite{HMSZ,CPZ,CZZ,CZZ1,CZZ2}). Indeed, almost all geometric structures which come with such generalized cohomology can be reconstructed in a purely algebraic way using the respective Hecke-type algebras. For example, the intersection product on cohomology can be defined as the dualized co-product of the respective nil-Hecke ring. Hence, all the properties of the intersection product can be deduced from those of the coproduct.

In these notes, we give a self-contained and detailed exposition of the interplay between nil-Hecke algebras, the coproduct structure and the cohomology of flag varieties.  In particular, we showcase how nil-Hecke algebras can be used a practical tool to make calculations in cohomology.  We also demonstrate how the same techniques can be applied to non-crystallographic root systems as well as to study (connective) $K$-theory of flag varieties.  Many of the results stated in these notes readily extend to root systems and flag varieties of Kac-Moody type, however for the sake of exposition and simplicity, we restrict our focus to finite root systems (although not necessarily crystallographic) and finite dimensional flag varieties.  For more on Kac-Moody nil-Hecke algebras and flag varieties, see \cite{Ku02}.

Our paper is organized as follows. In section~\ref{Section_typeA_example} we start by discussing root systems, root and weight lattices, Weyl and Coxeter groups.
These objects serve as the algebraic and combinatorial foundation for constructing and analyzing nil-Hecke algebras. In the next section~\ref{sec:nilHecke} we define the nil-Hecke ring associated to a root datum $\RS\hra \cl^\vee$ and give several examples. Observe that our definition applies to all root systems and lattices (not necessarily crystallographic).
We then realize the nil-Hecke ring as a sub-ring of a twisted group algebra and as a subring of certain operators on the polynomial ring of the root lattice. These two presentations provide important computational tools for dealing with nil-Hecke rings. Then in section~\ref{sec:coprod} we equip the localized twisted group algebra $Q_W$ with a co-product structure that plays an important role in the Schubert calculus of flag varieties. This coproduct naturally restricts to nil-Hecke subrings and also leads to the augmented coproduct (corresponding to the usual cohomology). We discuss several recursive formulas for its structure coefficients and provide various examples of computations. In section~\ref{sec:twistedgr} we study duals of the twisted group algebra, nil-Hecke ring and its relations to Schubert calculus.  In section~\ref{sec:eqcoh} we outline the classical connections between torus-equivariant singular cohomology of flag varieties and nil-Hecke rings. In section~\ref{sec:eqkth} we demonstrate how the results of previous sections can be extended to the context of the so-called connective $K$-theory.  Finally in the last section~\ref{sec:sagemath} we present some SageMath (Python) code for computing Schubert calculus structure coefficients for flag varieties of finite type.  We remark that many of the results that appear in Section 4-6 on Nil-Hecke rings follow \cite[Chapters 11-12]{Ku02}.

\

\paragraph{\it Acknowledgements}  E.R. was supported by a grant from the Simons Foundation 941273.  K.Z. was partially supported by the 
NSERC Discovery grant RGPIN-2022-03060, Canada.


\section{Root systems and Coxeter groups}\label{Section_typeA_example}

In this section, we discuss root systems, root and weight lattices, Weyl and Coxeter groups.
These objects serve as the algebraic and combinatorial foundation for constructing and analyzing nil-Hecke algebras.
There are many references on these subjects. In our presentation we primarily follow \cite{Humphreys} and \cite{Bou}.

We start from the following classical motivating example.

Let $n\geq 1$ and consider the Euclidean space $\rr^{n+1}$ with standard basis $\{e_0,\ldots,e_n\}$.
Let $\HH \subset \rr^{n+1}$ be the hyperplane consisting of vectors whose sum of coordinates is zero
\[
\HH:=\Big\{\sum_{i=0}^n a_i e_i \colon \sum_{i=0}^n a_i=0\Big\}.
\]
Let $\RS$ denote the subset of $\HH$ consisting of all differences of standard vectors
\[
\RS:=\{e_i-e_j\}_{i\neq j}.
\]
Clearly $\RS\cap m\RS\neq \emptyset$ if and only if $m=\pm 1$.
Observe that each vector from $\RS$ can be uniquely expressed as a $\zz$-linear combination of vectors from the subset
\[
\SR:=\{e_{i-1}-e_{i}\}_{i=1,\ldots,n}.
\]
Moreover, if $\al\in \RS$ and
\[
\al=\sum_{i=1}^n c_i (e_{i-1}-e_{i}),
\]
then either all coefficient $c_i\geq 0$, or all $c_i\leq 0$.
In other words, $\RS$ splits into two disjoint subsets $\RS^+$ and $\RS^-$, where
\[
\RS^+:=\{e_i-e_j\}_{i<j}\quad \text{and}\quad \RS^-:=\{e_i-e_j\}_{i>j}.
\]
Let $\cl$ denote the $\zz$-linear span of $\SR$.
Observe that $\cl$ is a free $\zz$-module of rank $n$ and $\cl\otimes_\zz \rr \simeq \HH$.

Given $\al\in \RS$, consider the orthogonal reflection $s_{\al}$ which fixes the hyperplane orthogonal to $\al=e_i-e_j$.
This reflection is given by the following formula
\[
s_\al(\be):=\be- (\al,\be)\, \al,
\]
where $(\cdot,\cdot)$ is the usual dot-product on $\rr^{n+1}$ given by $(e_i,e_j):=\de_{i,j}$.
Observe that $s_\al$ simply switches $e_i$ and $e_j$ (the $i$th and the $j$th coordinates)
and, hence, it leaves both $\RS$ and $\cl$ invariant:
\[
s_\al(\be)\in \RS\; (\text{resp.}\,\cl),\; \text{for all}\; \be \in \RS\; (\text{resp.}\, \cl).
\]

Let $W$ be the group generated by all reflections $s_\al$, $\al\in \SR$,
where the multiplication is given by the composition.
Since it acts by permutations of the set of standard vectors $\{e_0,\ldots,e_n\}$,
it can be identified with the symmetric group $S_{n+1}$ on the set of indices $\{0,\ldots, n\}$.
By the very definition it leaves $\RS$ and $\cl$ invariant.

\medskip

The introduced subset of vectors $\RS$ provides an example of a {\em root system}
of type $\tA$ and of rank $n$ denoted $\tA_n$:

\begin{itemize}
\item
elements of $\RS$ are called roots,
\item
elements of $\SR$ are called simple roots,
\item
elements of $\RS^+$ and $\RS^-$ are called positive and negative roots, respectively,
\item
the free $\zz$-module $\cl$ is called the root lattice,
\item
the group $W$ is called the Weyl group,
\item
elements $s_\al\in W$, $\al\in \SR$ are called simple reflections.
\end{itemize}


\subsection{\it Root systems and lattices.}\label{rddef}
We now provide definitions of a general root system and of a root datum.
These generalize the previous example and serve as algebraic and combinatorial foundations for most of the objects discussed later.

\begin{dfn}
Let $\RS$ be a finite (non-empty) subset of $\rr^N\setminus\{0\}$
(here we view $\rr^N$ as a Euclidean space as before).
For each vector $\al\in\RS$ we define a linear transformation $s_\al\colon \rr^N\to \rr^N$ by
\[
s_\al(x):=x-\tfrac{2(\al,x)}{(\al,\al)}\, \al,\quad x\in \rr^N.
\]
We say $\RS$ is a {\em root system} if the following conditions are satisfied:
\begin{enumerate}
\item[(i)]
If $m\in\rr$ and $\RS\cap m\RS\neq\emptyset$, then $m=\pm 1$, and
\item[(ii)]
$s_\al(\RS)\subseteq \RS$ for all $\al \in \RS$.
\end{enumerate}
\end{dfn}

Next, we define a root datum which pairs a root system with a lattice.
The definition follows the one given in \cite[Exp.~XXI, \S1.1]{SGA}.

\begin{dfn}\label{Def:root_datum_int}
A {\em root datum} is a finitely generated free $\zz$-module $\cl$ with a nonempty finite subset $\RS$ together with a set inclusion
\[
\RS \hra \cl^\vee, \quad \al \mapsto \al^\vee
\]
into the dual $\cl^\vee:=\Hom_\zz(\cl,\zz)$ such that
\begin{enumerate}
\item[(i)]
If $m\in \zz$ and $\RS \cap m\RS\neq \emptyset$, then $m=\pm 1$,
\item[(ii)]
$\al^\vee(\al)=2$ for all $\al \in \RS$, and
\item[(iii)]
for all  $\al,\be \in \RS$, we have
\[
\be - \al^\vee(\be)\,\al \in \RS
\quad\text{and}\quad
\be^\vee - \be^\vee(\al)\,\al^\vee \in \RS^\vee.
\]
\end{enumerate}
Here $\RS^\vee$ denotes the image of $\RS$ in $\cl^\vee$,
and the elements of $\RS$ (resp. $\RS^\vee$) are called roots (resp. coroots).
\end{dfn}

The $\zz$-submodule generated by $\RS$ in $\cl$ is called the {\em root lattice} and is denoted by $\rl$.
A root datum is called {\em semisimple} if
\[
\cl\otimes_\zz \qq=\rl \otimes_\zz \qq.
\]
From now on, by a root datum we will always mean a semisimple one.
The $\zz$-submodule of $\cl_\qq:=\cl\otimes_\zz \qq$ generated by all $x \in \cl_\qq$
such that $\al^\vee(x)\in \zz$ for all $\al\in\RS$ is called the {\em weight lattice} and is denoted by $\wl$.

By definition we have
\[
\rl\subseteq \cl \subseteq \wl\quad\text{ and }\quad \rl\otimes_\zz\qq=\cl_\qq=\wl\otimes_\zz \qq.
\]
The $\qq$-rank of $\cl_\qq$ is called the {\em rank} of the root datum.
Each root datum $\RS\hra \cl^\vee$ gives rise to a root system
by considering the real vector space $\cl_\rr:=\cl \otimes_\zz \rr$
equipped with a compatible Euclidean product $(\cdot,\cdot)$ such that
\[
\al^\vee(x)=\tfrac{2(\al,x)}{(\al,\al)}.
\]
While a root datum determines a root system, the converse is not true.
For example, taking $\cl=\cl_r$ compared to $\cl=\cl_w$ can produce non-isomorphic root datum.

It can be shown that the root lattice $\rl$ admits a $\zz$-basis $\SR=\{\al_1,\ldots,\al_n\}$
such that each $\al \in \RS$ is a linear combination of $\al_i$'s with
either all positive or all negative coefficients and $n$ is the rank of the root datum.
Hence the set $\RS$ splits into two disjoint subsets
\[
\RS = \RS^+ \sqcup \RS^-,
\]
where $\RS^+$ (resp. $\RS^-$) is called the set of positive (resp. negative) roots.
The $\zz$-basis $\SR$ is called the set of {\em simple roots},
We define the set of {\em fundamental weights} $\{\om_1,\ldots,\om_n\}\subset \wl$
to be Kronecker dual to the simple coroots $\{\al_1^\vee,\ldots,\al_n^\vee\}$,
i.e.,
\[\al_i^\vee(\om_j):=\de_{i,j}.\]
The fundamental weights form a $\zz$-basis of the weight lattice $\wl$.
The {\em Cartan matrix} $C:=[c_{ij}]_{1\leq i,j\leq n}$ of the root datum is defined by the values
\[
c_{ij}:=\al_j^\vee(\al_i).
\]
By definition we have $\al_i=\sum_{j=1}^n c_{ij}\,\om_j$.
In other words, the Cartan matrix expresses simple roots in terms of fundamental weights.
Observe that the determinant of the Cartan matrix coincides
with the number of elements in the quotient group $\cl_w/\cl_r$.

\begin{ex}
Consider the root system of type $\tA_2$.
Here we consider the hyperplane in $\rr^3$ given by
\[
\HH=\{a_1e_1+a_2e_2+a_3e_3\ |\ a_1+a_2+a_3=0\}
\]
and the subset of vectors
\[
\RS=\{\pm (e_1-e_2), \pm (e_2-e_3),\pm (e_1-e_3)\}\subset \HH.
\]
The subset $\RS$ forms a root system of rank 2 where
\[
\al_1:=e_1-e_2\quad\text{and}\quad\al_2:=e_2-e_3
\]
is the standard choice of simple roots.
The Cartan matrix of type $\tA_2$ is
\[
C=\left[\begin{matrix} 2 & -1 \\ -1 & 2 \end{matrix}\right].
\]
We can diagrammatically represent this root system as follows:
\[
\begin{tikzpicture}
\node[inner sep=0pt] at (0,0)
    {\includegraphics[scale=0.5]{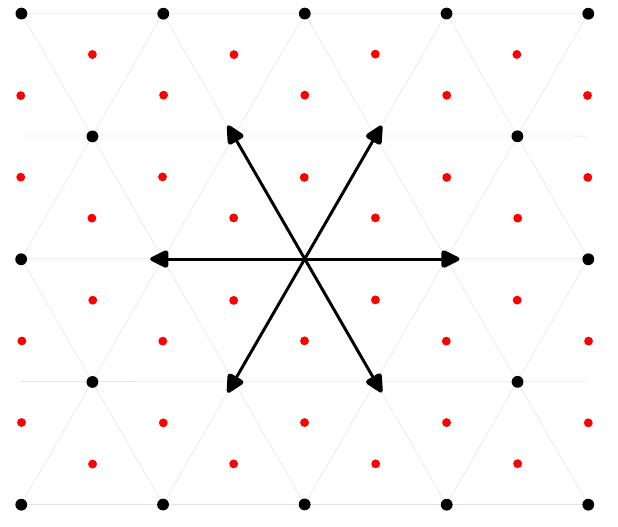}};
\node at (1.6,0) {$\al_1$};
\node at (-1.1,1.1) {$\al_2$};
\node at (1,1.4) {$\al_1+\al_2$};
\end{tikzpicture}
\]
Here the black vertices represent the root lattice $\cl_r$ and the black and red dots together represent the weight lattice $\cl_w$.
In particular, $\cl_r$ is the $\zz$-span of $\al_1$ and $\al_2$ and $\cl_w$ is the $\zz$-span of fundamental weights
\[
\om_1:=\tfrac{1}{3}(2\al_1+\al_2)\quad\text{and}\quad\om_2:=\tfrac{1}{3}(\al_1+2\al_2).
\]
Note that calculating the fundamental weights in standard coordinates gives
$\om_1=e_1$ and $\om_2=e_1+e_2$.
The lattices $\cl_r$ and $\cl_w$ are two examples of a root datum of type $\tA_2$.
The quotient group $\cl_w/\cl_r$ is cyclic of order $3$.
\end{ex}


\subsection{\it Dynkin diagrams and classification.}
A root datum is called {\em irreducible} if it cannot be represented as a direct sum of root data,
i.e., $\cl$ cannot be written as $\cl=\cl_1\oplus \cl_2$,
where $\RS_1\subset \cl_1$ and $\RS_2\subset \cl_2$ are the root data.

To any irreducible root datum we associate a multi-graph $\DD$ called the {\em Dynkin diagram}.
Its vertices correspond to the set of simple roots $\SR=\{\al_1,\ldots,\al_n\}$, and
the number of edges connecting two different vertices $\al_i$ and $\al_j$ is determined
by the product $c_{ij}\cdot c_{ji}$ of coefficients of the Cartan matrix.
If there is more than one edge connecting $\al_i$ and $\al_j$,
we assign a direction to the edges between $\al_i$ and $\al_j$.
Specifically if $c_{ij}<c_{ji}$, then $\al_j$ points to $\al_i$.
In this case, we also have that $(\al_i,\al_i)<(\al_j,\al_j)$.
If there is a single edge, between $\al_i$ and $\al_j$, then $(\al_i,\al_i)=(\al_j,\al_j)$.

All Dynkin diagrams of irreducible root data are classified and consist of the following types.
Here we label a vertex $i$ when it is associated to the simple root $\al_i$
(our enumeration of vertices follows Bourbaki; the lower index is the rank $n$):

\medskip

\noindent
\begin{tabular}{ll}
Classical types: & \\
$\xymatrix@C=2.5em{
\tA_n: & \node \lulab{1} \edge{r} & \node \lulab{2} \edge{r} &
\node \lulab{3} \ar@{..}[r] & \node \lulab{n-1} \edge{r} & \node \lulab{n}
}$
& \qquad
$\xymatrix@C=2.5em{
\tC_n:  & \node \lulab{1} \edge{r} & \node \lulab{2} \ar@{..}[r]
& \node \lulab{n-2} \edge{r} & \node \lulab{n-1} \ar@{=}[r] |-{<}\ar@{=}[r]
& \node \lulab{n}
}$
\\
$\xymatrix@C=2.5em{
\tB_n:& \node \lulab{1} \edge{r} & \node \lulab{2} \ar@{..}[r]
& \node \lulab{n-2} \edge{r} & \node \lulab{n-1}  \ar@{=}[r] |-{>} \ar@{=}[r]
& \node \lulab{n}
}$
& \qquad
$\xymatrix@C=2em@R=1ex{
\tD_n: & \node \lulab{1} \edge{r} & \node \lulab{2} \ar@{..}[r]
& \node \lulab{n-3} \edge{r} & \node \lulab{n-2}\edge{r} & \node \lulab{n-1}\\
&&&& \node \lulab{n}\edge{u} &
}$
\end{tabular}

\medskip

\noindent
\begin{tabular}{ll}
Exceptional types: & \\
$\xymatrix@=2em{
\tG_2: &\node \lulab{1}  \ar@3{-}[r]|-{<}\ar@3{-}[r]
& \node \lulab{2}
}$
&
$\xymatrix@C=1.5em@R=1ex{
\tE_7: &\node\lulab{1}\edge{r}& \node \lulab{3} \edge{r}& \node \lulab{4} \edge{r}\edge{d} &
\node \lulab{5} \edge{r} & \node \lulab{6}
\edge{r} & \node \lulab{7}\\
&&& \node \lulab{2} &&&
}$
\\
$\xymatrix@=2em{
\tF_4: & \node \lulab{1} \edge{r} & \node \lulab{2}  \ar@{=}[r]|-{>}\ar@{=}[r]
& \node \lulab{3}\edge{r} & \node \lulab{4}
}$
&
$\xymatrix@C=1.5em@R=1ex{
\tE_8: &\node\lulab{1}\edge{r}&\node\lulab{3}\edge{r}& \node \lulab{4} \edge{r}\edge{d}& \node \lulab{5} \edge{r} &
\node \lulab{6} \edge{r} & \node \lulab{7}
\edge{r} & \node \lulab{8}\\
&&& \node \lulab{2} &&&&
}$
\\
$\tE_6:
\xymatrix@=1.5em@R=1.5ex{
& \node \lulab{1} \edge{r}& \node \lulab{3} \edge{r} &
\node \lulab{4} \edge{r}\edge{d} & \node \lulab{5}
\edge{r} & \node \lulab{6}\\
&&& \node \lulab{2} &&
}$
&
\end{tabular}

\medskip

It can be shown that an irreducible root datum is determined uniquely by

\begin{itemize}
\item its Dynkin diagram $\DD$ and
\item the intermediate lattice $\cl_r\subseteq
\cl\subseteq \cl_w$.
\end{itemize}

If $\cl=\wl$ (resp. $\cl=\rl$), then the root datum is called simply connected (resp. adjoint),
and it will be denoted by $\DD^{sc}_n$ (resp. $\DD_n^{ad}$),
where $\DD={\rm A,B,C,D,E,F,G}$ is one of the Dynkin diagrams and $n$ is its rank.


\subsection{\it Coxeter groups and Weyl groups}
For each root $\al \in \RS$, define a $\zz$-linear map $s_\al\colon \cl_w \to \cl_w$, by
\[
s_\al(\la):=\la - \al^\vee (\la) \al, \quad \la\in \cl_w.
\]
The linear map $s_\al$ is called the {\em reflection} with respect to the root $\al$.
By definition we have $s_\al^2=\id$.
The group $W$ generated by all reflections $s_\al$, $\al\in \RS$ is called the {\em Weyl group} of the root datum.
It can be shown that the Weyl group has a presentation
\[
W=\langle s_1,\ldots,s_n \colon (s_is_j)^{m_{ij}}=1 \rangle
\]
where $s_i=s_{\al_i}$ denotes the generator corresponding to the simple root $\al_i\in \SR$.
So $W$ is the finite Coxeter group.
The {\em Coxeter exponents} $m_{ij}$ are determined by the coefficients of the Cartan matrix:

If $i=j$, then  $m_{ii}:=1$.  Otherwise, if $i\neq j$, then:
\[
\begin{aligned}
m_{ij} &:=2 \text{ if }c_{ij}c_{ji}=0 \text{ (no edge), } \\
m_{ij} &:=3 \text{ if }c_{ij}c_{ji}=1 \text{ (single edge), }\\
m_{ij} &:=4 \text{ if }c_{ij}c_{ji}=2 \text{ (double edge), }\\
m_{ij} &:=6 \text{ if }c_{ij}c_{ji}=3 \text{ (triple edge). }
\end{aligned}
\]
(Since $\RS$ is finite, these are all possible values of Coxeter exponents.)

By definition each element of $W$ can be written as a product of simple reflections
\[
w=s_{i_1}s_{i_2}\cdots s_{i_r}.
\]
The minimal number of generators required to express an element $w\in W$ is unique
and is called the {\em length of $w$}.
We denote the length by $\ell(w)$.
Any expression of $w$ as a product of $\ell(w)$ simple reflections is called {\em reduced} expression (or {\em reduced word}).


\subsection{\it Finite real root systems.}
We now show how root datum can be generalized to finite real root systems.

Let $\RS\subseteq \rr^N$ be a root system and for each $\al\in \RS$. We define $\al^\vee\in (\rr^N)^*$ by
\begin{equation}\tag{*}
\al^\vee(x) := \tfrac{2(\al,x)}{(\al,\al)}, \quad  x\in \rr^N.
\end{equation}
If $\al^\vee(\be)\in \zz$ for all $\al,\be\in \RS$, then $\al^\vee$ defines an element in the dual $\zz$-module $\cl_r^\vee$,
and the map $\al\mapsto\al^\vee$ gives rise to an inclusion $\RS \hra \cl_r^\vee$,
that satisfies the properties of a root datum.
This gives rise the following definition in \cite[\S2.9]{Humphreys}

\begin{dfn}
We say a finite root system $\RS\subseteq \rr^N\setminus\{0\}$ is {\em crystallographic}
if the values $\tfrac{2(\al,\be)}{(\al,\al)}\in\zz$ for all $\al,\be\in \RS$.
\end{dfn}

If a root system is not crystallographic (i.e. $\al^\vee(\be)\notin \zz$ for some $\al,\be\in\RS$),
then assigning coroots using the formula~(*) does not produce a well-defined root datum.
However, with an appropriate change in coefficient ring, a root datum can be formally defined for any root system (see below).

Let $\RS\subseteq \rr^N\setminus\{0\}$ be a root system and
define $\OF$ to be the subring of $\rr$ generated by the values $\al^\vee(\be)$ for all $\al,\be\in \RS$.
We call $\OF$ the {\em coefficient ring of the root system} $\RS$ and
it can be viewed as the smallest subring of $\rr$ containing the values $\al^\vee(\be)$.
If $\RS$ is crystallographic, then $\OF=\zz$.
The following is a generalization of Definition~\ref{Def:root_datum_int} that allows for non-crystallographic root datum.

\begin{dfn}\label{Def:root_datum_real}
Let $\RS\subseteq \rr^N\setminus\{0\}$ be a root system and
for each $\al\in\RS$, define the coroot $\al^\vee\in{(\rr^N)}^*$ as in~(*).
A {\em root datum with coefficients in $\OF$} is a free $\OF$-module $\cl$ containing $\RS$
together with a set inclusion
\[
\RS\hra \cl^\vee, \quad \al\mapsto \al^\vee
\]
in the dual $\cl^\vee:=\Hom_{\OF}(\cl,\OF)$ such that
\begin{enumerate}
\item[(i)]
If $m\in \OF$ and $\RS \cap m\RS\neq \emptyset$, then $m=\pm 1$,
\item[(ii)]
$\al^\vee(\al)=2$ for all $\al \in \RS$, and
\item[(iii)]
for any  $\al,\be \in \RS$, we have
\[
\be - \al^\vee(\be)\,\al \in \RS\quad\text{and}\quad  \be^\vee - \be^\vee(\al)\,\al^\vee \in \RS^\vee.
\]
\end{enumerate}
\end{dfn}

Define the \emph{root lattice} $\cl_r$ to be the $\OF$-linear span of roots in $\RS$.
Taking $\cl=\cl_r$ provides an example of a root datum with coefficients in $\OF$.
As with crystallographic root datum, $\cl_r$ admits an $\OF$-basis of simple roots
$\SR=\{\al_1,\cdots,\al_n\}\subseteq\RS$ and
the Weyl group $W=\langle s_1,\ldots,s_n\colon (s_is_j)^{m_{ij}}=1\rangle$.
Let $K$ denote the field of fractions of $\OF$.
If $\RS$ is crystallographic then $K=\qq$.
We say a root datum is {\em semisimple} if
\[
\cl \otimes_{\OF} K= \cl_r \otimes_{\OF} K.
\]

Define the \emph{weight lattice}
\[
\cl_w:=\{\la\in \cl_r\otimes_{\OF} K\colon \al^\vee(\la)\in {\OF} \text{ for all }\al\in \RS\}.
\]
We prove the following lemma:

\begin{lem}\label{lem:semisimple_lattices}
Let $\RS\subseteq \rr^N\setminus\{0\}$ be a root system and
let $\RS\hra \cl^\vee$ be a semisimple root datum with coefficients in $\OF$.
Then the weight lattice $\cl_w$ is a finitely generated free $\OF$-module and $\cl_r\subseteq\cl\subseteq \cl_w$.
\end{lem}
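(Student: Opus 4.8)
The plan is to settle the two inclusions and then produce an explicit $\OF$-basis of $\cl_w$ consisting of ``fundamental weights''.

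\textbf{The inclusions.} That $\cl_r\subseteq\cl$ is immediate: $\RS\subseteq\cl$ by hypothesis and $\cl$ is an $\OF$-module, so the $\OF$-span $\cl_r$ of $\RS$ lies in $\cl$. For $\cl\subseteq\cl_w$, I would first note that $\cl$ is free, hence torsion-free, so the canonical map $\cl\to\cl\otimes_\OF K$ is injective; by semisimplicity $\cl\otimes_\OF K=\cl_r\otimes_\OF K$, so $\cl$ is identified with a submodule of $\cl_r\otimes_\OF K$. Moreover, by Definition~\ref{Def:root_datum_real} each $\al^\vee$ lies in $\cl^\vee=\Hom_\OF(\cl,\OF)$, whence $\al^\vee(\la)\in\OF$ for all $\la\in\cl$ and all $\al\in\RS$; thus $\cl\subseteq\cl_w$.

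\textbf{Fundamental weights.} Next I would show the $K$-linear map
\[
\varphi\colon\ \cl_r\otimes_\OF K\ \longrightarrow\ K^n,\qquad \la\longmapsto\big(\al_1^\vee(\la),\ldots,\al_n^\vee(\la)\big)
\]
is an isomorphism. In the $\OF$-basis $\SR$ of $\cl_r$ its matrix is $\big[\al_i^\vee(\al_j)\big]_{i,j}$, which factors as $D\cdot B$ with $D=\mathrm{diag}\!\big(\tfrac{2}{(\al_i,\al_i)}\big)$ invertible and $B=\big[(\al_i,\al_j)\big]_{i,j}$ the Gram matrix of $\SR$. Since the simple roots are $\OF$-linearly independent they are $\rr$-linearly independent, so $B$ is positive definite, hence invertible, and $\varphi$ is an isomorphism. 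Let $\om_1,\ldots,\om_n\in\cl_r\otimes_\OF K$ be the (unique, by the isomorphism) elements with $\al_j^\vee(\om_i)=\de_{ij}$; they form a $K$-basis of $\cl_r\otimes_\OF K$.

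\textbf{Identifying $\cl_w$.} Put $M:=\bigoplus_{i=1}^n\OF\,\om_i$, a free $\OF$-module of rank $n$; it then suffices to prove $\cl_w=M$. The inclusion $\cl_w\subseteq M$ is clear: for $\la\in\cl_w$, writing $\la=\sum_i a_i\om_i$ with $a_i\in K$ gives $a_i=\al_i^\vee(\la)\in\OF$. For $M\subseteq\cl_w$ I would first check that $M$ is $W$-stable: from $s_j(\la)=\la-\al_j^\vee(\la)\,\al_j$ we get $\al_i^\vee(s_j\la)=\al_i^\vee(\la)-\al_j^\vee(\la)\,\al_i^\vee(\al_j)$, and since the Cartan entries $\al_i^\vee(\al_j)$ lie in $\OF$ the $\om$-coordinates of $s_j\la$ remain in $\OF$; as $W=\langle s_1,\ldots,s_n\rangle$, it preserves $M$. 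Now fix $\la\in M$ and $\al\in\RS$. Using the standard fact that every root of a finite root system is $W$-conjugate to a simple root, write $\al=w(\al_j)$ with $w\in W$; since $w$ is orthogonal,
\[
\al^\vee(\la)=\tfrac{2(\al,\la)}{(\al,\al)}=\tfrac{2(\al_j,w^{-1}\la)}{(\al_j,\al_j)}=\al_j^\vee(w^{-1}\la)\in\OF,
\]
as $w^{-1}\la\in M$. Hence $\la\in\cl_w$, so $M\subseteq\cl_w$ and $\cl_w=M$ is free of rank $n$.

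\textbf{Main obstacle.} The inclusions and the invertibility of the Gram matrix are routine. The one step needing care in the non-crystallographic generality is reducing the defining condition of $\cl_w$ from all coroots to the simple coroots; I handle it through $W$-stability of $M$ together with $W$-conjugacy of roots to simple roots, both valid verbatim for finite (possibly non-crystallographic) root systems. If one prefers an argument closer to the text, the same reduction follows from the fact that $\RS^\vee$ is the root system dual to $\RS$ with simple system $\{\al_1^\vee,\ldots,\al_n^\vee\}$, so every coroot is an integral combination of the simple coroots.
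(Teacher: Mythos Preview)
Your proof is correct and follows essentially the same strategy as the paper: reduce the defining condition of $\cl_w$ from all coroots to the simple coroots, and then identify $\cl_w$ with the image of $\cl_r$ under the inverse Cartan matrix (equivalently, as the $\OF$-span of the fundamental weights). Your reduction step, via $W$-stability of $M$ together with orthogonality of $W$ and conjugacy of roots to simple roots, is arguably cleaner than the paper's, which argues that every root is an $\OF$-combination of simple roots and then passes (implicitly, via the dual root system) to the analogous statement for coroots. One small quibble: your claim that $\OF$-linear independence of $\SR$ implies $\rr$-linear independence is not valid in general; here you should simply invoke the standard fact (e.g.\ Humphreys) that a simple system of a finite root system is $\rr$-linearly independent.
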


\begin{proof}
Since it is semisimple, it immediately follows that $\cl_r\subseteq\cl\subseteq \cl_w$.
Let $\SR:=\{\al_1,\ldots,\al_n\}$ denote the simple roots of $\RS$.
If $\al\in \RS$, then by \cite[\S1.5]{Humphreys}, there exists $w\in W$ such that $\al=w(\al_i)$ for some $\al_i\in \SR$.
By induction on the length of $w$, we have that $\al=\sum_{i=1}^n b_i\,\al_i$ for some coefficients $b_i\in \OF$.
Hence, if $\al_j^{\vee}(\la)\in \OF$ for all $\al_j\in\SR$,
then $\al^{\vee}(\la)\in \OF$.
Let $\la\in \cl_r\otimes_{\OF} K$ and write
\[
\la=\sum_{i=1}^n c_i\, \al_i, \quad c_i\in K.
\]
We have that $\la\in \cl_w$ if and only if
\[
\al_j^\vee(\la)=\sum_{i=1}^nc_i\, \al_j^\vee(\al_i)\in \OF\;\text{ for all }\al_j\in\SR.
\]
These conditions are equivalent to $C\cdot(c_1,\ldots,c_n)^\mathrm{t}\in (\OF)^n$,
where $C$ is the Cartan matrix of $\RS$.
Hence $\cl_w=C^{-1}(\cl_r)$ is a finitely generated, free $\OF$-module.
\end{proof}

As in the crystallographic case, we will only consider semisimple root datum.

\begin{ex}\label{Example:B2}
We contrast two examples of root system $\tB_2$.
First, we have the crystallographic root system of type $\tB_2$ given by the Cartan matrix
\[
C=\left[\begin{matrix} 2 & -2 \\ -1 & 2 \end{matrix}\right].
\]
The root system and root lattice can be visualized by the following diagram:
\[
\begin{tikzpicture}
\node[inner sep=0pt] at (0,0)
    {\includegraphics[scale=0.75]{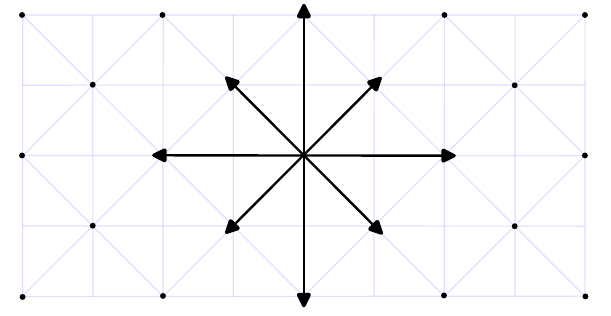}};
\node at (2.3,0) {$\al_1$};
\node at (-1.3,1) {$\al_2$};
\node at (1.8,1) {$\al_1+\al_2$};
\node at (0.8,2) {$\al_1+2\al_2$};
\end{tikzpicture}
\]
In this case, the coefficient ring is $\OF=\zz$.

Second, we consider the ``normalized" root system of type $\tB_2$ given by the Cartan matrix
\[
C=\left[\begin{matrix} 2 & -\sqrt{2} \\ -\sqrt{2} & 2 \end{matrix}\right].
\]
In this case, the root system is no longer crystallographic:
\[
\begin{tikzpicture}
\node[inner sep=0pt] at (0,0)
    {\includegraphics[scale=0.75]{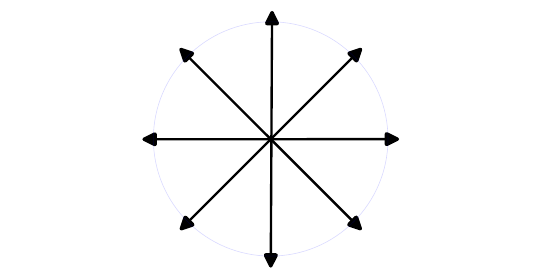}};

\node at (2,0) {$\al_1$};
\node at (-1.5,1) {$\al_2$};
\node at (2.1,1) {$\sqrt{2}\al_1+\al_2$};
\node at (1,1.8) {$\al_1+\sqrt{2}\al_2$};
\end{tikzpicture}
\]
The coefficient ring is $\OF=\zz(\sqrt{2})$ and the root lattice $\cl_r$ is the $\zz(\sqrt{2})$-span of $\al_1$ and $\al_2$.
Note that, while $\cl_r$ is a free $\zz$-module of rank 4, it is also dense (in the Euclidean sense) in $\rr^2$.
\end{ex}


\subsection{\it Coxeter diagrams and classification}
Any Coxeter group
\[
W=\langle s_1,\ldots,s_n \colon (s_is_j)^{m_{ij}}=1 \rangle
\]
can be geometrically realized as a root system as follows.
Let $\{\al_1,\ldots,\al_n\}$ and $\{\al_1^\vee,\ldots,\al_n^\vee\}$ be bases of Euclidean spaces $\rr^n$ and $(\rr^n)^*$ where
\[
(\al_i,\al_i)=1\quad\text{and}\quad \al_j^\vee(\al_i)=-2\cos (\pi/m_{ij}).
\]
Then the orthogonal subgroup of $\GL(\rr^n)$ generated by simple reflections
\[
s_{i}(x)=x-\al_i^\vee(x)\,\al_i,\quad x\in \rr^n,
\]
gives rise to a root system whose Weyl group is isomorphic to $W$.
We call such realization the {\em finite (normalized) real root system corresponding to} $W$.

Finite (normalized) real root systems are classified by {\em Coxeter diagrams} \cite[\S2]{Humphreys}.
Similar to Dynkin diagrams, its vertices are in 1-1 correspondence with the set of simple roots $\{\al_1,\ldots,\al_n\}$.
Different vertices $\al_i$ and $\al_j$ are connected by an edge only if $m_{ij}\ge 3$.
In addition, if $m_{ij}\ge 4$, then the respective edge is labelled by the Coxeter exponent $m_{ij}$.
The respective Cartan matrix $C=[\al_j^\vee(\al_i)]_{ij}$ is symmetric.
When $W$ is not the Weyl group of a finite crystallographic root system,
the finite real root systems are classified by the Coxeter diagrams of the following types
(here $m=5$ or $m\ge 7$):
\[
\begin{tabular}{ll}
$\xymatrix@C=2.5em{
\tI_2(m): & \node \lulab{1}  \ar@{-}[r]|-m  & \node \lulab{2}
}$
& \qquad
$\xymatrix@C=2.5em{
\tH_3:  & \node \lulab{1} \ar@{-}[r]|-5  & \node \lulab{2} \edge{r}
& \node \lulab{3}
}$
\\
$\xymatrix@C=2.5em{
\tH_4:& \node \lulab{1} \ar@{-}[r]|-5 & \node \lulab{2} \edge{r}
& \node \lulab{3} \edge{r} & \node \lulab{4}
}$
&
\end{tabular}
\]

Observe that $\tI_2(2)$ and $\tI_2(3)$ correspond to finite crystallographic root systems of type $\tA_1\times \tA_1$ and $\tA_2$, respectively.
Moreover, $\tI_2(4)$ and $\tI_2(6)$ can be viewed as the normalized crystallographic root systems of type $\tB_2$ ($=\tC_2$) and $\tG_2$, respectively.  Indeed, these are obtained by scaling one of the simple roots by $2\cos (\pi/m)=\sqrt{2}$ and $\sqrt{3}$, respectively (See Example \ref{Example:B2}).  In the following examples, we give an explicit construction of the normalized and polarized real root systems of the dihedral group $\tI_2(m)$.

\begin{ex}\label{Ex:I_5 real-root-system}
We realize $\tI_2(m)$ as the Weyl group of a real root system as follows.
Under the standard identification of $\cc\simeq \rr^2$, let $\zeta$ denote the $2m$-th primitive root of unity.
Consider the finite set of vectors $\RS=\langle \zeta\rangle\subset \rr^2$.
For any $\al\in\RS$, the orthogonal reflection (in complex coordinates) is given by
\[
s_\al(z)=(-\al \bar{\al}^{-1})\,\bar{z}.\]
In particular, if $\al=\zeta^j$, then
\[
s_{\zeta^j}(z)=-\zeta^{2j}\bar z=\zeta^{2j+m}\bar z.
\]
Hence $s_{\zeta^j}(\zeta^i)=\zeta^{2j+m-i}$ and the set $\RS$ forms a root system with Weyl group
$\tI_2(m)=\langle s_\al\colon \al\in\RS\rangle$.
Moreover, since
\[
s_{\zeta^j}(\zeta^i)=\zeta^{2j+m-i}=\zeta^i-(\zeta^{i-j}+\zeta^{j-i})\zeta^j,
\]
we conclude that the co-roots satisfy $(\zeta^j)^\vee (\zeta^i)=\zeta^{i-j}+\zeta^{j-i}$.  If we set $$\tau_m:=2\cos(\pi/m)=\zeta+\zeta^{-1}$$ and define simple roots

\[
\SR=\{\al_1=1,\al_2=\zeta^{m-1},\}
\]
then $\al_1^\vee(\al_2)=\al_2^\vee(\al_1)=-\tau_m$ and the simple reflections act by
\[
s_{i}(\al_j)=\al_j+\tau_m\,\al_i
\]
for $i\neq j$.  Thus, $\RS$ is a normalized root system.  It is easy to see that $s_1,s_2$ are Coxeter generators of the Weyl group of $\tI_2(m)$.
The coefficient ring
\[
\OF=\langle \zeta^{i-j}+\zeta^{j-i} \rangle_{i,j}=\zz(\tau_m)
\]
is the ring of integers in the real cyclotomic field $\qq(\tau_m)$.

If $m=5$, then $\tau_5=\tfrac{1+\sqrt{5}}{2}$ is a root of $x^2-x-1=0$.  If $m=7$, then $\tau_7$ is a root of $x^3-x^2-2x+1=0$.
The diagram below corresponds the normalized root system of type $\tI_2(5)$.
 \[
\begin{tikzpicture}
\node[inner sep=0pt] at (0,0)
    {\includegraphics[scale=0.75]{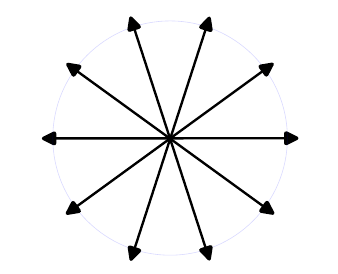}};

\node at (2,0) {$\al_1$};
\node at (-1.7,1) {$\al_2$};
\node at (2.3,1) {$\tau_5\al_1+\al_2$};
\node at (-1.1,1.8) {$\al_1+\tau_5\al_2$};
\node at (1.1,1.8) {$\tau_5\al_1+\tau_5\al_2$};
\end{tikzpicture}
\]

In contrast to normalized root system, we next consider polarized root systems of type $\tI_2(m)$ in the case that $m$ is even.  When $m$ is even, the action of $\tI_2(m)$ on the normalized root system $\RS$ has two orbits which are generated by the simple roots $1$ and $\zeta^{m-1}$ respectively.  We define a new root system of type $\tI_2(m)$ by scaling the vectors in the orbit of $\zeta^{m-1}$ by $\tau_m^{-1}$ and leaving the orbit of 1 unchanged.  Any root system isomorphic to this rank 2 root system is called the \emph{polarized root system} of type $\tI_2(m)$.  In this case, we have simple roots
\[
\SR=\{\al_1=1,\al_2=\tau_m^{-1}\cdot \zeta^{m-1}\}.
\]
with $\al_1^\vee(\al_2)=-1$, $\al_2^\vee(\al_1)=-(\tau_m^2)$ and the coefficient ring $\OF=\zz(\tau_m^2)$.  The simple reflections act on the simple roots by
\[s_1(\alpha_2)=\alpha_1+\alpha_2 \quad\text{and}\quad s_2(\alpha_1)=\alpha_1+\tau_m^2\alpha_2.\]
The $\tI_2(m)$-orbits of $\alpha_1$ and $\alpha_2$ are called the long and short roots respectively.  Note that the crystallographic root systems of type $\tB_2$ and $\tG_2$ are polarized root systems corresponding to $m=4$ and $m=6$.  Intuitively, polarized root systems can be thought of as root systems that are ``as close as possible" to crystallographic.

If $m=8$, then $\tau_8=\sqrt{2+\sqrt{2}}$ and $\OF=\zz(\tau_m^2)=\zz(\sqrt{2})$.
The diagram below corresponds the polarized root system of type $\tI_2(8)$.
 \[
\begin{tikzpicture}
\node[inner sep=0pt] at (0,0)
    {\includegraphics[scale=0.75]{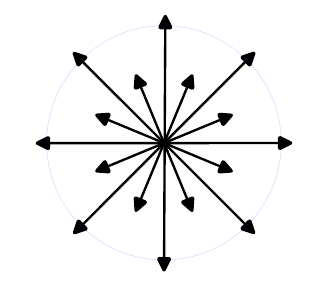}};

\node at (2,0) {$\al_1$};
\node at (-1.125,0.5) {$\al_2$};
\node at (1.625,0.5) {$\al_1+\al_2$};
\node at (-2,1.5) {$\al_1+(2+\sqrt{2})\al_2$};
\node at (3.5,1.5) {\hspace{1in}};
\end{tikzpicture}
\]
\end{ex}
We remark that when $m$ is odd, then any non-normalized root system of type $\tI_2(m)$ must be of at least rank 3.



\section{Nil-Hecke rings and twisted group algebras}\label{sec:nilHecke}

In this section we define the nil-Hecke ring associated to a root datum $\RS\hra \cl^\vee$ and give several examples.
We then realize the nil-Hecke ring as a sub-ring of a twisted group algebra and as a subring of certain operators on the polynomial ring of the root lattice.
These two presentations provide important computational tools for dealing with nil-Hecke rings.
Our exposition follows the original construction of the nil-Hecke ring given by Kostant and Kumar in \cite{KK86} and the generalized version of it in \cite{CZZ}.

\subsection{\it The nil-Coxeter and the nil-Hecke ring}
We start with the following definition

\begin{dfn}\label{Def:nil_coxeter_ring}
Let $W$ be a Coxeter group with generating set $\{s_1,\ldots,s_n\}$
subject to the Coxeter relations $(s_is_j)^{m_{ij}}=1$ for $1\leq i,j\leq n$.
The {\em nil-Coxeter ring} $\NN$ associated to $W$ is the unital ring with free generators $\{X_i\colon i=1,\ldots,n\}$
modulo the {\em nilpotent relations}
\[
X_i^2=0\quad \text{for all $1\leq i\leq n$},
\]
and {\em braid relations}
\[
\underbrace{X_iX_jX_i\ldots}_{m_{ij}}=\underbrace{X_jX_iX_j\ldots}_{m_{ij}}\quad \text{for all $i\neq j$}.
\]
\end{dfn}

In other words, $\NN$ is obtained from the integral group ring of the Coxeter group $W$ by substituting the relation $s_i^2=1$ with $X_i^2=0$.
The following lemma follows immediately from the definition of nil-Coxeter ring.

\begin{lem}\label{lem:Nil-Coxeter}
Let $\NN$ denote the nil-Coxeter ring associated to a Coxeter group $W$ and let $w\in W$.
If $w=s_{i_1}s_{i_2}\ldots s_{i_r}=s_{j_1}s_{j_2}\ldots s_{j_r}$ are two reduced expressions for $w$, then
\[
X_{i_1}X_{i_2}\ldots X_{i_r}=X_{j_1}X_{j_2}\ldots X_{j_r}.
\]
In particular, the product of generators $X_w:=X_{i_1}X_{i_2}\ldots X_{i_r}$
does not depend on the reduced expression of $w$ and is well-defined.

Furthermore, if $u,v\in W$, then
\[
X_u\cdot X_v=
\begin{cases}
X_{uv} & if\quad \ell(u)+\ell(v)=\ell(uv),\\
0 & if\quad \ell(u)+\ell(v)<\ell(uv).
\end{cases}
\]
\end{lem}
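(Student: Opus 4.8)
The plan is to split the lemma into two independent claims: (1) well-definedness of $X_w$, and (2) the multiplication rule. For claim (1), I would invoke the standard combinatorial fact (Matsumoto--Tits, see \cite{Humphreys, Bou}) that any two reduced expressions for $w\in W$ are connected by a finite sequence of braid moves $\underbrace{s_is_js_i\cdots}_{m_{ij}}\leftrightarrow\underbrace{s_js_is_j\cdots}_{m_{ij}}$, without ever passing through a non-reduced word. Since the braid relations $\underbrace{X_iX_jX_i\cdots}_{m_{ij}}=\underbrace{X_jX_iX_j\cdots}_{m_{ij}}$ hold by definition in $\NN$, applying the same sequence of moves to the product of generators shows $X_{i_1}\cdots X_{i_r}=X_{j_1}\cdots X_{j_r}$. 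This makes $X_w$ well-defined.

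For claim (2), I would argue by cases on $\ell(u)+\ell(v)$ versus $\ell(uv)$. Write reduced expressions $u=s_{i_1}\cdots s_{i_p}$ and $v=s_{j_1}\cdots s_{j_q}$, so $X_uX_v=X_{i_1}\cdots X_{i_p}X_{j_1}\cdots X_{j_q}$ is the product of $p+q$ generators. If $\ell(uv)=p+q$, then $s_{i_1}\cdots s_{i_p}s_{j_1}\cdots s_{j_q}$ is a reduced expression for $uv$, so by claim (1) this product equals $X_{uv}$. If $\ell(uv)<p+q$, the concatenated word is not reduced; by the Exchange/Deletion Condition for Coxeter groups one can delete two of the letters to obtain a shorter word representing the same element, and iterating, one reaches a reduced expression for $uv$ by deleting letters in pairs. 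I would then want to massage the concatenated word, using only braid moves, into a word that contains a consecutive repetition $s_k s_k$ somewhere; the corresponding product of $X$'s then contains $X_k^2=0$, forcing $X_uX_v=0$.

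The key technical point supporting the vanishing case is the following: if $\ell(uv)<\ell(u)+\ell(v)$, then there is a reduced expression for $u$ ending in some simple reflection $s_k$ and a reduced expression for $v$ beginning in the same $s_k$ (equivalently, $u$ has a right descent that is a left descent of $v$). Granting this, rewrite $X_u=X_{u'}X_k$ and $X_v=X_kX_{v'}$ with $\ell(u')=p-1$, $\ell(v')=q-1$, using claim (1) to pass between reduced words; then $X_uX_v=X_{u'}X_k^2X_{v'}=0$. To obtain that common descent, one uses a descent/strong-exchange argument: choose $k$ with $\ell(vs_k)? $ wait — rather, since $\ell(uv)<\ell(u)+\ell(v)$ there exists a reduced word of $v$, say $v = s_{j_1}\cdots s_{j_q}$, and a least index $t$ such that $\ell(u s_{j_1}\cdots s_{j_t}) < \ell(u s_{j_1}\cdots s_{j_{t-1}})$; setting $u_0 = u s_{j_1}\cdots s_{j_{t-1}}$ (which has a reduced word of length $p + t - 1$ extending that of $u$) and $k = j_t$, the Exchange Condition shows $u_0 s_k$ is obtained from a reduced word of $u_0$ by deleting a letter, and one manipulates from there. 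I would fold all of this into a clean statement, cite \cite[\S1.6--1.7]{Humphreys} for the Exchange Condition and descent lemmas, and keep the $\NN$-side of the argument to the two lines $X_u X_v = X_{u'} X_k^2 X_{v'} = 0$.

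The main obstacle is the vanishing case: carefully tracking reduced expressions through braid and exchange moves is fiddly, and one must be sure that all rewriting of the $X$-products is legitimate — i.e.\ that each step is either a braid relation (always valid in $\NN$) or a genuine application of claim (1) to two reduced words for the \emph{same} element. The pleasant part is that once the common-descent statement is in hand, the nil-Coxeter computation is trivial, so the whole proof reduces to quoting the right Coxeter-combinatorial facts and bookkeeping the reduced expressions correctly.
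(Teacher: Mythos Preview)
The paper gives no proof of this lemma beyond the remark that it ``follows immediately from the definition of nil-Coxeter ring,'' so there is nothing substantive to compare against; your outline via Matsumoto's theorem for claim~(1) and a descent argument for claim~(2) is the standard way to make this precise.

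That said, there is a genuine error in what you call the ``key technical point.'' It is \emph{not} true that whenever $\ell(uv)<\ell(u)+\ell(v)$ there exists a simple reflection $s_k$ which is simultaneously a right descent of $u$ and a left descent of $v$. For instance, in $W=S_3$ take $u=v=s_1s_2$: then $\ell(u)+\ell(v)=4$ while $uv=s_1s_2s_1s_2=s_1(s_1s_2s_1)=s_2s_1$ has length $2$, yet $D_R(u)=\{s_2\}$ and $D_L(v)=\{s_1\}$ are disjoint. So the short argument ``$X_uX_v=X_{u'}X_k^2X_{v'}=0$'' with $u=u's_k$ and $v=s_kv'$ simply does not apply.

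Your second approach, which you sketch after the ``wait,'' is the correct one and should replace the first entirely. With $v=s_{j_1}\cdots s_{j_q}$ reduced, choose the least $t$ with $\ell(u\,s_{j_1}\cdots s_{j_t})<\ell(u\,s_{j_1}\cdots s_{j_{t-1}})$; then $u_0:=u\,s_{j_1}\cdots s_{j_{t-1}}$ has the reduced expression obtained by concatenation, so by claim~(1) $X_uX_{j_1}\cdots X_{j_{t-1}}=X_{u_0}$. Since $s_{j_t}\in D_R(u_0)$, again by claim~(1) one has $X_{u_0}=X_{u_0'}X_{j_t}$ for some reduced $u_0'$, and hence
\[
X_uX_v=X_{u_0}X_{j_t}X_{j_{t+1}}\cdots X_{j_q}=X_{u_0'}X_{j_t}^2X_{j_{t+1}}\cdots X_{j_q}=0.
\]
This is a complete argument; the only Coxeter input needed is Matsumoto's theorem and the fact that a right descent can always be moved to the last position of a reduced word. (Incidentally, the inequality in the second case of the statement is a typo in the paper: it should read $\ell(u)+\ell(v)>\ell(uv)$, as you correctly interpreted.)
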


Observe that Lemma~\ref{lem:Nil-Coxeter} implies that the nil-Coxeter ring $\NN$ is a free $\zz$-module with basis $\{X_w\}_{w\in W}$.

Now suppose we are given a root datum $\RS \hra \cl^\vee$
in the sense of either~\ref{Def:root_datum_int} or~\ref{Def:root_datum_real}.
By definition the lattice $\cl$ is a free $\OF$-module and, hence, we may consider the symmetric algebra
\[
S(\cl):=\Sym_{\OF}(\cl).
\]
If we fix a $\OF$-basis $\{x_1,\ldots,x_n\}$ of $\cl$, then $S(\cl)$ is isomorphic to the polynomial ring ${\OF}[x_1,\ldots,x_n]$.
Note that the respective Coxeter group $W$ acts on the lattice $\cl$ and, therefore, it acts on the symmetric algebra $S(\cl)$ which we denote simply by $S$.

\begin{dfn}\label{Def:nil_Hecke_ring}
Given a root datum $\RS\hra \cl^\vee$ and a basis $\{x_1,\ldots,x_n\}$ of $\cl$,
the {\em nil-Hecke ring} associated to $\RS\hra \cl^\vee$ is defined to be the left $S$-module
\[
\NH:=S\otimes_{\OF} \NN
\]
equipped with multiplication induced by the relations called the {\em affine relations}:
\begin{equation}\tag{Aff}
X_i\, x_j = s_i(x_j)\, X_i + \al_i^\vee(x_j)\quad \text{for all }1\leq i,j\leq n.
\end{equation}
\end{dfn}
Note that the definition of the nil-Hecke ring does not depend on the choice of basis of $\cl$.
From the affine relations~(Aff) it follows that $\NH$ is a free left $S$-module with basis $\{X_w\}_{w\in W}$.
The relations~(Aff) also induce the following relations in $\NH$:

\begin{prop}\label{prop:general_aff}
For any $w\in W$ and $p\in \cl$ we have the following relation in $\NH$:
\[
X_w p = w(p) X_w + \sum_{w' \xra{\be} w} \be^{\vee}(p) X_{w'},
\]
where $u\xra{\be} w$ means $w=u s_\be$ for some root $\be\in \RS_+$ with $\ell(w)=\ell(u)+1$.
\end{prop}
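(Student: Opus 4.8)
The plan is to induct on the length $\ell(w)$, with the affine relations~(Aff) serving as the base case $\ell(w)=1$. For $w=s_i$ and $p\in\cl$, relation~(Aff) reads $X_ip = s_i(p)X_i + \al_i^\vee(p)$; since the only arrow into $s_i$ is $1\xra{\al_i}s_i$ (because $s_i = 1\cdot s_{\al_i}$ and $\ell(s_i)=\ell(1)+1$), and $X_1$ is the identity, this is exactly the claimed formula. For the inductive step, I would write $w = s_i v$ with $\ell(w) = \ell(v)+1$, so that $X_w = X_i X_v$ by Lemma~\ref{lem:Nil-Coxeter}. Applying the induction hypothesis to $v$ gives
\[
X_v p = v(p)X_v + \sum_{v' \xra{\gamma} v} \gamma^\vee(p) X_{v'},
\]
and then I would left-multiply by $X_i$, pushing $X_i$ past the polynomial coefficients $v(p)$ and $\gamma^\vee(p)$ using~(Aff) again (these are degree-one elements of $S$, or scalars, so~(Aff) applies directly after expanding in the chosen basis).

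The computation produces three kinds of terms. First, $X_i\big(v(p)X_v\big) = s_i(v(p))X_iX_v + \al_i^\vee(v(p))X_v = (sv)(p)X_{s_iv} + \al_i^\vee(v(p))X_v = w(p)X_w + \al_i^\vee(v(p))X_v$, using $X_iX_v = X_{s_iv}=X_w$. The leading term $w(p)X_w$ is precisely the leading term we want. Second, for each arrow $v'\xra{\gamma}v$ with $\ell(v')=\ell(v)-1$, we get $X_i \gamma^\vee(p) X_{v'} = \gamma^\vee(p)\big(s_i(1)\big)$—more carefully, $X_i\big(\gamma^\vee(p)X_{v'}\big)=s_i(\gamma^\vee(p))X_iX_{v'} + \al_i^\vee(\gamma^\vee(p))X_{v'}$; but $\gamma^\vee(p)\in\OF$ is a scalar fixed by $s_i$, and $\al_i^\vee$ annihilates scalars, so this reduces simply to $\gamma^\vee(p)\,X_iX_{v'}$. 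Now $X_iX_{v'}$ equals $X_{s_iv'}$ if $\ell(s_iv')=\ell(v')+1$ and equals $0$ otherwise, again by Lemma~\ref{lem:Nil-Coxeter}.

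The main obstacle—and the combinatorial heart of the argument—is to check that the surviving terms reassemble into exactly $\sum_{w'\xra{\be}w}\be^\vee(p)X_{w'}$, with the correct multiplicities and no spurious or missing contributions. Concretely I must verify: (a) the term $\al_i^\vee(v(p))X_v$ from the leading part matches the arrow $v\xra{\be}w$ where $\be = -v^{-1}(\al_i)$ (or $v(\be)=\al_i$ up to sign), so that $\be^\vee(p) = \al_i^\vee(v(p))$ by the $W$-equivariance of the coroot pairing; and (b) for $v'\xra{\gamma}v$, whenever $\ell(s_iv')=\ell(v')+1$ the element $w' := s_iv'$ satisfies $w'\xra{\delta}w$ for a suitable root $\delta$ with $\delta^\vee(p)=\gamma^\vee(p)$, and conversely every arrow into $w$ of this ``non-leading'' type arises this way exactly once. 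This is a standard but delicate length/Bruhat-order bookkeeping exercise: one uses that $u\xra{\be}w$ iff $w=us_\be$ with $\ell(w)=\ell(u)+1$, the strong exchange condition, and the identity $s_i s_\be s_i = s_{s_i(\be)}$ to transport reflections across the left multiplication by $s_i$. I would organize this by a careful case analysis on whether $\ell(s_iv')>\ell(v')$ or not, observing that the pairs where the length drops contribute $0$ and therefore do not disturb the count, while the cancellations in the Coxeter group are precisely mirrored by the vanishing $X_iX_{v'}=0$. Once this matching is established, collecting all terms yields the displayed identity, completing the induction.
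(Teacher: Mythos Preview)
Your proposal is correct and follows essentially the same inductive approach as the paper's proof: write $w=s_iv$ with $\ell(v)=\ell(w)-1$, apply the induction hypothesis to $X_v p$, left-multiply by $X_i$, and match the resulting terms against the Bruhat covers $w'\xra{\be}w$. Two small clarifications will streamline your write-up: in part~(a) the root is $\be=v^{-1}(\al_i)$ (positive because $\ell(s_iv)>\ell(v)$), not its negative; and in part~(b) the root is literally unchanged---from $v=v's_\gamma$ one gets $w=s_iv=(s_iv')s_\gamma=w's_\gamma$, so $\delta=\gamma$ and no conjugation identity $s_is_\be s_i=s_{s_i(\be)}$ is needed.
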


\begin{proof}
We proceed by induction on $\ell(w)$.
If $\ell(w)=1$, then the proposition follows directly from~(Aff).
Now assume that the relation holds for all $u<w$ and
write $w=s_iu$ from some simple generator $s_i$ where $\ell(w)=\ell(u)+1$.
By induction, we have
\begin{align*}
X_w p = X_i \cdot (X_u p)&=X_i \cdot \Big(u(p) X_u + \sum_{u'\xra{\gamma} u} \gamma^{\vee}(p) X_{u'}\Big)\\
&=w(p) X_w + \al_i^{\vee}(u(p)) X_u + \sum_{u'\xra{\gamma} u} \gamma^{\vee}(p)\, X_i\cdot X_{u'}.
\end{align*}
First note that $u\xra{u^{-1}(\al_i)}w$ and, hence the second summand belongs to the desired sum in the relation.
Now suppose that $w'\xra{\be} w$ and $u\neq w'$ and hence $w=w's_{\be}=s_iu$.
This implies $u=(s_iw')s_{\be}$ with $\ell(u)=\ell(s_iw')+1$ and thus $s_iw'\xra{\be} u$.
Conversely, if $u'\xra{\gamma} u$, then $u'=s_iw'$ from some $(w')\xra{\gamma} w$
if and only if $\ell(s_iu')=\ell(u')+1$.
Hence, the summand in the expression above yields the desired result.
\end{proof}

\begin{ex}
Consider the Coxeter group of type $\tA_3$ and let $w=s_1s_2s_3$.
Then for any $p\in \cl$ we have
\[
X_w p= w(p) X_w+ \al_3^{\vee}(p)X_{s_1s_2}+ s_3(\al_2)^{\vee}(p) X_{s_1s_3}+ s_3s_2(\al_1)^{\vee}(p) X_{s_2s_3}.
\]
\end{ex}

\begin{rem}
Observe that an embedding of intermediate lattices $\cl \hra \cl'$ induces
the embedding of the respective polynomial rings $S(\cl)\to S(\cl')$.
This further induces an embedding of the respective nil-Hecke rings $\NH \hra \NH'$.
This embedding becomes an isomorphism after inverting the order$|\cl'/\cl|$.
In other words, we get an isomorphism between $\NH$ and $\NH'$ if we extend the coefficient ring $\OF$ to $\OF[\tfrac{1}{|\cl'/\cl_r|}]$.
However, the rings $\NH$ and $\NH'$ are not necessarily isomorphic over $\OF$.
\end{rem}

\begin{ex}
Let $\RS$ be the root system of type $\tA_n$.
The Weyl group $W$ is the permutation group generated by $\{s_1,\ldots,s_n\}$ subject to the Coxeter relations:
\[
s_i^2=1, \quad s_is_{i+1}s_i=s_{i+1}s_is_{i+1}\quad\text{and}\quad s_is_j=s_js_i\quad\text{if $|i-j|>1$.}
\]
The nil-Coxeter group $\NN$ is generated by $\langle 1,X_1,\ldots,X_n\rangle$ with the analogous relations:
\begin{equation}\label{Eqn:nil-coxeter-typeA}
X_i^2=0,\quad X_iX_{i+1}X_i=X_{i+1}X_iX_{i+1},\quad\text{and}\quad X_iX_j=X_jX_i\quad\text{if $|i-j|>1$.}
\end{equation}

Consider the root datum $\RS\hra \cl_w^\vee$ corresponding to the weight lattice (it corresponds to the simply connected simple group $\SL_{n+1}$).
The weight lattice has a basis of fundamental weights $\{\om_1,\ldots,\om_n\}$.
Using the notation from Section~\ref{Section_typeA_example}, we can write $\om_i=e_1+e_2+\cdots+e_i$,
where $\{e_1,\ldots,e_{n+1}\}$ denotes the standard coordinate basis of $\rr^{n+1}$.
The nil-Hecke ring is then given by
\[
\NH_w=\zz[\om_1,\ldots,\om_n]\otimes_\zz \NN,
\]
where $\NN$ is the respective nil-Coxeter ring, and
the affine relations (here we assume $\om_0=\om_{n+1}=0$) are
\begin{equation}\label{eq:affr}
\begin{cases}
X_i\om_j=\om_j X_i& \text{if }i\neq j,\\
X_i\om_i=(\om_{i-1}-\om_i+\om_{i+1}) X_i + 1 & \text{otherwise.}
\end{cases}
\end{equation}

Observe that $e_1=\om_1$, $e_i=\om_i-\om_{i-1}$ for $i=2,\ldots, n$, and $e_{n+1}=-\om_n$
(here we assume $e_1+\ldots+e_{n+1}=0$).
Substituting into~\eqref{eq:affr} we obtain
\[
X_ie_j=e_j X_i,\text{ if }j\neq i,i+1,\; \text{ and } X_ie_i=e_{i+1}X_i+1,\; X_ie_{i+1}=e_iX_i-1
\]
which can be rewritten as
\begin{equation}\label{eq:afftypeA}
X_ie_j-e_{s_i(j)}X_i=\de_{i,j}-\de_{i+1,j}.
\end{equation}

If we substitute the polynomial ring $\zz[\om_1,\ldots,\om_n]$ by $\zz[e_1,\ldots,e_{n+1}]$
and forget the relation $e_1+\ldots +e_{n+1}=0$,
we then obtain the classical definition of the {\em nil-Hecke~algebra} generated by $\{e_1,\ldots,e_{n+1},X_1,\ldots,X_n\}$
which satisfies the nil-Coxeter relations given in~\eqref{Eqn:nil-coxeter-typeA}
and the additional affine relations in~\eqref{eq:afftypeA}.

Consider now the root datum $\RS\hra \cl_r^\vee$ corresponding to the root lattice instead of the weight lattice
(this corresponds to the adjoint simple group $\PGL_{n+1}$).
The root lattice has a basis of simple roots $\SR=\{\al_1,\ldots,\al_n\}$, and in terms of the standard basis of $\rr^{n+1}$
we have $\al_i=e_i-e_{i+1}$.
The nil-Hecke ring is then
\[
{\NH}_r=\zz[\al_1,\ldots,\al_n]\otimes_{\zz}\NN
\]
with the additional affine relations:
\[
\begin{cases}
X_i\al_j=(\al_i+\al_j)X_i-1 & \text{if }|i-j|=1,\\
X_i\al_j=\al_jX_i &  \text{if }|i-j|>1,\\
X_i\al_i=-\al_i X_i + 2 & \text{otherwise}.
\end{cases}
\]
By definition ${\NH}_r$ is a proper subring of ${\NH}_w$.
For example, take $n=1$. Then
\[
{\NH}_r=\zz[\al]\Big\langle
X \colon
{\scriptsize\begin{matrix}
X^2=0,\\ X\al=-\al X+2
\end{matrix}}
\Big\rangle \stackrel{\al\mapsto 2\om}\longrightarrow
\zz[\om]\Big\langle X \colon
{\scriptsize\begin{matrix}
X^2=0,\\ X\om=-\om X+1
\end{matrix}}
\Big\rangle={\NH}_w.
\]
Observe that the ring ${\NH}_r$ is not isomorphic to ${\NH}_w$ (over $\zz$),
as they are not even isomorphic modulo $2$ (${\NH}_r$ is commutative, but ${\NH}_w$ is not).
However, they become isomorphic after inverting $2$.
\end{ex}

Next, we consider an example of non-crystallographic type.

\begin{ex}
Consider the normalized real root system of type $\tI_2(5)$ (see Example~\ref{Ex:I_5 real-root-system}).
In this case $\OF=\zz(\tau_5)$ where $\tau_5=2\cos(\tfrac{\pi}{5})=\tfrac{1+\sqrt{5}}{2}$.

The nil-Hecke ring ${\NH}_r$ for $\RS\hra \cl_r^\vee$ (the root lattice) is the quotient of
\[
{\OF}[\al_1,\al_2]\langle 1, X_1,X_2\colon X_1^2=X_2^2=0,\; X_1X_2X_1X_2X_1=X_2X_1X_2X_1X_2 \rangle
\]
by (Aff) relations: ($i\neq j$, $i,j=1,2$)
\[
\begin{cases}
X_i \al_{i} = -\al_i X_i + 2,\\
X_i\al_j =(\al_j+\tau_5\al_i)X_i -\tau_5.
\end{cases}
\]

The nil-Hecke ring ${\NH}_w$ for $\RS\hra \cl_w^\vee$ (the weight lattice with $\al_i=2\om_i-\tau_5\om_j$) is the quotient of
\[
{\OF}[\om_1,\om_2]\langle 1, X_1,X_2\colon X_1^2=X_2^2=0,\; X_1X_2X_1X_2X_1=X_2X_1X_2X_1X_2 \rangle
\]
by (Aff) relations: ($i\neq j$, $i,j=1,2$)
\[
\begin{cases}
X_i \om_{i} = (\tau_5 \om_j-\om_i) X_i + 1,\\
X_i\om_j =\om_jX_i.
\end{cases}
\]

Direct computations show that $|\cl_w/\cl_r|=5$. So the inclusion ${\NH}_r\hra {\NH}_w$ becomes a ring isomorphism only after inverting $5$.
\end{ex}


\subsection{\it The nil-Hecke subring of a localized twisted group algebra}
Given a finite real root system, its Coxeter group and the associated root/weight lattice,
we introduce the localized twisted group algebra and show that the nil-Hecke ring is natural subring of this algebra.
The action of the twisted group algebra on the symmetric algebra over a root datum gives
an important representation of the nil-Hecke ring.
In this subsection we provide an overview of these constructions and explain its basic properties.

Consider a  root datum $\RS\hra \cl^\vee$ with coefficient ring $\OF$.
Let $R$ be a commutative $\OF$-algebra. Let $S:=\Sym_R(\cl)$ be the associated graded symmetric $R$-algebra.
The Coxeter group $W$ acts on $\cl$ by $\OF$-linear automorphisms and, hence,
it acts on $S$ by $R$-linear ring automorphisms.

\begin{dfn}
We define the {\em twisted group algebra} $S_W$ to be the free left $S$-module
\[
S_W:=S\otimes_R R[W].
\]
\end{dfn}

Each element of $S_W$ can be written as a $S$-linear combination
\[
\sum_{w\in W} p_w\de_w,
\]
where $\{\de_w\}_{w\in W}$ is the standard basis of the group ring $R[W]$ and $p_w\in S$ are coefficients.
For the identity element $e\in W$ we set $\de_e=1$.
We define the ring multiplication on $S_W$ to be the one coming from $R[W]$, however, we require it to satisfy the twisted commuting relation with respect to coefficients:
\[
w(p)\,\de_w =\de_w\, p,\quad  p\in S,\; w\in W.
\]
For example, if $p,q\in S$ and $w,v\in W$, then
\[
p\,\de_w \cdot q\,\de_v=p\,w(q)\,\de_{wv}.
\]
Let $\SR=\{\al_1,\ldots,\al_n\}\subset\cl$ be the set of simple roots.
We further assume that each $\al_i\in \SR$ is regular in $S$ (in other words, $\al_i$ is a non-zero element which is not a zero-divisor).
Let $\al\in \RS$.  Since $\al=w(\al_i)$ for some $w\in W$ and $\al_i\in\SR$, we have $\al$ is also regular in $S$.
A symmetric algebra $S$ in which every root is regular is called $\RS$-regular.

\begin{rem} (cf. \cite[Lemma~2.2]{CZZ})
The regularity of $\al_i$ holds if either the root datum does not contain an irreducible component of simply-connected type $\mathrm{C}$
or if $2$ is regular in $R$.
Indeed, in the simply-connected type $\tC_n$ we have $\al_n=-2\om_{n-1}+2\om_n \equiv 0\mod 2$.
\end{rem}

\begin{dfn}
Let $Q:=S\left(\tfrac{1}{\al},\al\in \RS\right)$ denote the localization of the $\RS$-regular algebra $S$ at all $\al\in\RS$.  We define the \emph{ localized twisted group algebra} to be \[Q_W:=Q \otimes_R R[W].\]
\end{dfn}

Observe that the canonical ring inclusion $S\hra Q$ induces the ring inclusion $S_W \hra Q_W$.

We now realize the nil-Coxeter and nil-Hecke rings as sub-rings of $Q_W$.
For each simple root $\al_i$, define the element
\[
X_i:=\tfrac{1}{\al_i}(\de_e-\de_{s_i})\in Q_W
\]
and let $\NN$ denote the sub-ring of $Q_W$ generated by $\{X_i\}_{i=1,\ldots,n}$.
It is proved in \cite{KK86} that the generators
$\{X_i\}_{i=1,\ldots,n}$ satisfy the nilpotent and braid relations.
Hence, if $w=s_{i_1}\ldots s_{i_r}$ is a reduced expression, we can define
\[
X_w:=X_{i_1}\ldots X_{i_r}
\]
with $X_w$ being independent of choice of reduced expression of $w$.
\begin{prop}\label{prop:nil_coxeter_in_Q_w}
The sub-ring $\NN\subseteq Q_W$ is isomorphic to the nil-Coxeter ring of $W$ as defined in Definition~\ref{Def:nil_coxeter_ring}.
\end{prop}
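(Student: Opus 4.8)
The plan is to produce an explicit ring isomorphism between the abstract nil-Coxeter ring $\NN$ of Definition~\ref{Def:nil_coxeter_ring} and the subring of $Q_W$ generated by the $X_i:=\tfrac{1}{\al_i}(\de_e-\de_{s_i})$. The natural candidate map sends each abstract generator $X_i$ to the corresponding element of $Q_W$; the cited result of \cite{KK86} already guarantees that these $Q_W$-elements satisfy the nilpotent relations $X_i^2=0$ and the braid relations, so the universal property of the abstract nil-Coxeter ring gives a well-defined surjective ring homomorphism $\phi\colon\NN\to\NN\subseteq Q_W$. The only thing left to prove is that $\phi$ is injective.

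First I would establish, inside $Q_W$, an explicit formula expressing each $X_w$ in the $S$-basis $\{\de_v\}_{v\in W}$ of $Q_W$. Starting from $X_i=\tfrac{1}{\al_i}\de_e-\tfrac{1}{\al_i}\de_{s_i}$ and multiplying out a reduced word $X_w=X_{i_1}\cdots X_{i_r}$ using the twisted commuting relation $w(p)\de_w=\de_w p$, one obtains a triangularity statement of the form
\[
X_w=\sum_{v\le w} a_{w,v}\,\de_v,\qquad a_{w,v}\in Q,
\]
where the leading coefficient $a_{w,w}$ is a nonzero element of $Q$ (a product of inverses of roots, hence invertible), and all other $v$ occurring satisfy $v<w$ in Bruhat order (or at least $v\neq w$ with $\ell(v)<\ell(w)$ suffices). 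This is a standard induction on $\ell(w)$: multiplying $X_u$ by $X_i$ on the left and using the twisted relation only replaces each $\de_v$ by a $Q$-combination of $\de_{v}$ and $\de_{s_iv}$, and the leading term $\de_{s_iw}$ picks up the invertible coefficient $\tfrac{1}{\al_i}\,s_i(a_{u,u})$ times a sign. Since $Q_W$ is a free left $S$-module on $\{\de_v\}$, and in fact free over $Q$ on the same basis, this triangularity immediately shows the $\{X_w\}_{w\in W}$ are $Q$-linearly independent in $Q_W$, a fortiori $\zz$-linearly independent.

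Now I would combine this with the structural fact recorded just after Lemma~\ref{lem:Nil-Coxeter}: the abstract nil-Coxeter ring $\NN$ is a free $\zz$-module with basis $\{X_w\}_{w\in W}$. The homomorphism $\phi$ carries this basis to the elements $X_w\in Q_W$, which we have just shown are linearly independent. Hence $\phi$ maps a $\zz$-basis to a linearly independent set, so $\phi$ is injective; being also surjective onto the subring $\NN\subseteq Q_W$ it generates, $\phi$ is the desired isomorphism.

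The main obstacle is the triangularity computation: one must be careful that the coefficients arising when expanding $X_{i_1}\cdots X_{i_r}$ do not accidentally vanish, i.e.\ that the leading coefficient $a_{w,w}$ is genuinely a unit in $Q$. This is exactly where $\RS$-regularity of $S$ is used — each root $\al$ is a non-zero-divisor, so its image in $Q$ is invertible, and a product of such inverses (possibly with a sign) is again invertible. A clean way to organize this is to observe that $a_{w,w}=\tfrac{(-1)^{\ell(w)}}{\prod}$ where the product runs over the inversion set of $w$ in an appropriate order; but for the purposes of injectivity it is enough to know $a_{w,w}\neq 0$ together with the strict triangularity of the remaining terms, which follows by the same induction. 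Everything else is a routine application of the free-module structure of $Q_W$ and the already-established relations.
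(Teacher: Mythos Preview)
Your proposal is correct and follows essentially the same approach as the paper: both arguments expand $X_w$ in the $\{\de_v\}$ basis, observe upper-triangularity with respect to Bruhat order with an invertible leading coefficient (a product of inverse roots, using $\RS$-regularity), and conclude $Q$-linear independence of $\{X_w\}_{w\in W}$, hence $\zz$-linear independence. The paper's version is terser and computes the leading coefficient explicitly as $\tfrac{(-1)^{\ell(w)}}{\al_{i_1}s_{i_1}(\al_{i_2})\cdots s_{i_1}\cdots s_{i_{r-1}}(\al_{i_r})}$, but your framing via the universal-property map $\phi$ and its injectivity is the same argument with the logical scaffolding made more explicit.
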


\begin{proof}
It suffices to show that the set $\{X_w\}_{w\in W}$ is linearly independent over $S$
and is therefore a $\zz$-basis of $\NN$ (following Lemma~\ref{lem:Nil-Coxeter}).
Let $w\in W$ and fix a reduced word $w=s_{i_1}\cdots s_{i_r}$.
Then expanding $X_w$ in the $\{\de_u\}_{u\in W}$ basis of $Q_W$ yields:
\begin{align*}
X_w&=\tfrac{1}{\al_{i_1}}(\de_e-\de_{s_{i_1}})\cdots \tfrac{1}{\al_{i_r}}(\de_e-\de_{s_{i_r}})\\
&=\tfrac{(-1)^{\ell(w)}}{\al_{i_1}s_{i_1}(\al_{i_2})\cdots (s_{i_1}s_{i_2}\cdots s_{i_{r-1}}(\al_{i_r}))}\, \de_w+\sum_{u<w} c_{w,u}\, \de_u.
\end{align*}
In particular, the transition matrix from $\{X_u\}_{u\in W}$ to $\{\de_u\}_{u\in W}$ is upper triangular with respect to Bruhat order
and is invertible over $Q$.
Hence $\{X_u\}_{u\in W}$ is a $Q$-basis of $Q_W$ which implies that $\{X_u\}_{u\in W}$ is $S$-linearly independent.
\end{proof}
Note that the denominator of the coefficient $c_{w,w}$ in the proof above is equal to the product of roots in $\RS^+\cap w(\RS^-)$.  This set of roots is called the inversion set of $w\in W$.  We now define
\[
\NH:=S\otimes_{R} \NN\subset Q\otimes_{Q} Q_W=Q_W.
\]
\begin{prop}\label{prop:nil_Hecke_in_Q_w}
The sub-ring $\NH\subseteq Q_W$ is isomorphic to the nil-Hecke ring of $W$ as defined in Definition \ref{Def:nil_Hecke_ring}.
\end{prop}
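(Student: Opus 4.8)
The plan is to show that the subring $\NH = S\otimes_R \NN \subset Q_W$ satisfies exactly the presentation of Definition~\ref{Def:nil_Hecke_ring}, namely that it is a free left $S$-module on $\{X_w\}_{w\in W}$ and that the affine relations~(Aff) hold and generate the multiplication. First I would verify directly in $Q_W$ that the elements $X_i = \tfrac{1}{\al_i}(\de_e-\de_{s_i})$ satisfy~(Aff). For $p\in S$ one computes
\[
X_i\, p = \tfrac{1}{\al_i}(\de_e - \de_{s_i})\, p = \tfrac{1}{\al_i}\big(p\,\de_e - s_i(p)\,\de_{s_i}\big),
\]
using the twisted commuting relation $\de_{s_i} p = s_i(p)\de_{s_i}$. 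On the other hand
\[
s_i(p)\, X_i + \partial_i(p) = \tfrac{1}{\al_i}\big(s_i(p)\de_e - s_i(p)\de_{s_i}\big) + \tfrac{p - s_i(p)}{\al_i},
\]
where $\partial_i(p) := \tfrac{p - s_i(p)}{\al_i}$; the two expressions agree, so $X_i\, p = s_i(p)\,X_i + \partial_i(p)$ in $Q_W$. Specializing $p = x_j$ for a basis element of $\cl$ gives $\partial_i(x_j) = \tfrac{x_j - s_i(x_j)}{\al_i} = \tfrac{\al_i^\vee(x_j)\,\al_i}{\al_i} = \al_i^\vee(x_j)$ (here I use that $s_i(x_j) = x_j - \al_i^\vee(x_j)\al_i$ and that $\al_i$ is regular), which is precisely~(Aff). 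In particular $\partial_i(p)\in S$ whenever $p\in S$, so $\NH$ is closed under multiplication and $\NH$ is genuinely a subring of $Q_W$, not merely a submodule.

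Next I would invoke Proposition~\ref{prop:nil_coxeter_in_Q_w}: the subring $\NN$ is the abstract nil-Coxeter ring, hence free over $\zz$ (equivalently over $\OF$, after the base change to $R$) on $\{X_w\}_{w\in W}$, and by the upper-triangularity argument in that proof the set $\{X_w\}_{w\in W}$ is $Q$-linearly independent in $Q_W$, hence $S$-linearly independent. Therefore $\NH = S\otimes_R\NN = \bigoplus_{w\in W} S\cdot X_w$ is a free left $S$-module on $\{X_w\}$, matching the module structure asserted in Definition~\ref{Def:nil_Hecke_ring}. It remains to identify the multiplicative structure: let $\NH^{\mathrm{abs}}$ denote the abstract ring of Definition~\ref{Def:nil_Hecke_ring}. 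Since the $X_i\in Q_W$ satisfy the nilpotent relations, the braid relations (by \cite{KK86}, as recalled before Proposition~\ref{prop:nil_coxeter_in_Q_w}), and the relations~(Aff) just verified, the universal property of $\NH^{\mathrm{abs}}$ yields a ring homomorphism $\varphi\colon \NH^{\mathrm{abs}}\to Q_W$ sending the abstract generators to these elements, with image $\NH$. This $\varphi$ is a homomorphism of left $S$-modules sending the basis $\{X_w\}_{w\in W}$ of $\NH^{\mathrm{abs}}$ to the $S$-linearly independent family $\{X_w\}_{w\in W}\subset\NH$; hence $\varphi$ is injective, and it is surjective onto $\NH$ by construction. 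Thus $\varphi\colon \NH^{\mathrm{abs}}\xrightarrow{\ \sim\ }\NH$ is a ring isomorphism, as claimed.

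The main obstacle is the bookkeeping needed to be sure that the abstract nil-Hecke ring $\NH^{\mathrm{abs}}$ of Definition~\ref{Def:nil_Hecke_ring} really is free as a left $S$-module on $\{X_w\}_{w\in W}$, so that injectivity of $\varphi$ can be read off from $S$-linear independence in $Q_W$ — i.e., one must check that the affine relations~(Aff) suffice to rewrite every product into the normal form $\sum_w p_w X_w$ without collapsing the basis. This is the remark made just after Definition~\ref{Def:nil_Hecke_ring}, and Proposition~\ref{prop:general_aff} gives the key straightening move $X_w p = w(p)X_w + \sum_{w'\xrightarrow{\be} w}\be^\vee(p)X_{w'}$; combined with Lemma~\ref{lem:Nil-Coxeter} for products $X_u X_v$ of pure nil-Coxeter elements, this shows that $\{X_w\}$ spans $\NH^{\mathrm{abs}}$ over $S$, and the $Q$-independence of the images in $Q_W$ then forces it to be a basis. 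Everything else is the routine verification carried out above.
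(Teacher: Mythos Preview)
Your proof is correct and follows essentially the same approach as the paper: verify directly in $Q_W$ that the elements $X_i=\tfrac{1}{\al_i}(\de_e-\de_{s_i})$ satisfy the affine relations~(Aff), then combine this with Proposition~\ref{prop:nil_coxeter_in_Q_w} to conclude. The paper's proof is far terser---it simply says ``it suffices to show that the relations (Aff) hold'' and carries out the same computation $X_i p - s_i(p)X_i = \al_i^\vee(p)\de_e$---whereas you have carefully spelled out the universal-property and free-module bookkeeping that the paper leaves implicit.
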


\begin{proof}
To prove that $\NH$ is isomorphic to the nil-Hecke ring,
it suffices to show that the relations (Aff) hold.
Indeed, let $p\in\cl.$  Then for each $1\leq i\leq n$, we have
\begin{align*}
X_i p-s_i(p)X_i&=\tfrac{1}{\al_i}(\de_e-\de_{s_i})p-\tfrac{s_i(p)}{\al_i}(\de_e-\de_{s_i}) \\
&= \tfrac{p}{\al_i}\de_e-\tfrac{s_i(p)}{\al_i}\de_{s_i}- \tfrac{s_i(p)}{\al_i}\de_{e}+\tfrac{s_i(p)}{\al_i}\de_{s_i}=
\al_i^\vee(x)\de_e. \qedhere
\end{align*}
\end{proof}


\subsection{\it The polynomial representation}
From now on, we will view the nil-Hecke ring as an $S$-subalgebra of $Q_W$.
We define the $\circ$-action of the localized twisted group algebra $Q_W$ on $Q$ by $R$-linear endomorphisms via
\begin{equation}\label{Eqn:circle_action}
p\de_w \circ q:=pw(q).
\end{equation}
Under this action the element $X_i$ acts on $Q$ as the divided-difference operator
\[
X_i\circ q =\frac{q-s_i(q)}{\al_i}, \quad q\in Q.
\]
Divided-difference operators preserve the polynomial subalgebra $S$,
so the $\circ$-action gives a homomorphism of $R$-modules
\[
Q_W\to \Hom_R(S,Q)
\]
which restricts to a homomorphism of $R$-algebras which we denote by:
\[
\rho\colon \NH\to \End_R(S).
\]
Specifically, for any $y=\sum_{w\in W} a_w X_w\in\NH$ and $p\in S$, we have
\[
\rho(y)(p):=\sum_{w\in W} a_w (X_w\circ p)\in S.
\]

\begin{lem}\label{lem:bullet_properties}
Let $p,q\in S$ and $1\leq i\leq n$.  Then the following are true:

\begin{enumerate}
\item[(i)] If $p\in\cl$, then $X_i\circ p=\al_i^\vee(p)$
\item[(ii)] The operator $X_i$ satisfies the twisted Leibniz rule:
\[
X_i\circ(pq)=(X_i\circ p)\, q+s_i(p)\,X_i\circ q
\]
\end{enumerate}
\end{lem}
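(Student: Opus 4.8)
The plan is to prove both statements directly from the definition of the $\circ$-action in~\eqref{Eqn:circle_action} and the explicit formula $X_i \circ q = \tfrac{q-s_i(q)}{\al_i}$. For part (i), I would take $p \in \cl$ and compute $X_i \circ p = \tfrac{p - s_i(p)}{\al_i}$. Using the reflection formula $s_i(p) = p - \al_i^\vee(p)\,\al_i$ from the definition of $s_\al$ in the section on Coxeter groups, we get $p - s_i(p) = \al_i^\vee(p)\,\al_i$, so the quotient is exactly $\al_i^\vee(p)$. One should note here that $\al_i^\vee(p) \in R$ (indeed in $\OF$), so the division by $\al_i$ is legitimate and the result is a scalar, landing in $S$ as claimed; this is where the $\RS$-regularity of $S$ (so that dividing by $\al_i$ makes sense) is implicitly used, though in fact the quotient is a polynomial of lower degree and no localization is needed for the final answer.

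For part (ii), the twisted Leibniz rule, I would simply expand both sides. On the left,
\[
X_i \circ (pq) = \frac{pq - s_i(pq)}{\al_i} = \frac{pq - s_i(p)s_i(q)}{\al_i},
\]
using that $s_i$ is a ring automorphism of $S$. On the right, the first term is $(X_i \circ p)\,q = \tfrac{(p - s_i(p))q}{\al_i}$ and the second is $s_i(p)\,(X_i \circ q) = \tfrac{s_i(p)(q - s_i(q))}{\al_i}$. Adding these, the cross terms cancel:
\[
\frac{(p - s_i(p))q + s_i(p)(q - s_i(q))}{\al_i} = \frac{pq - s_i(p)q + s_i(p)q - s_i(p)s_i(q)}{\al_i} = \frac{pq - s_i(p)s_i(q)}{\al_i},
\]
which matches the left-hand side. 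Finally I would observe that the displayed identities are equalities in $Q$ a priori, but since divided-difference operators preserve the polynomial subalgebra $S$ (as already noted before the statement of the lemma), all quantities appearing lie in $S$, so the identities hold in $S$ as stated.

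There is essentially no hard part: the argument is a direct unwinding of definitions together with the elementary identity $s_i(p) = p - \al_i^\vee(p)\,\al_i$ and the multiplicativity of $s_i$. The only point requiring a moment of care is bookkeeping the two cross terms in part (ii) so they cancel correctly, and confirming in part (i) that the factor $\al_i$ in the denominator genuinely cancels against the $\al_i$ appearing in $p - s_i(p)$, leaving the scalar $\al_i^\vee(p)$.
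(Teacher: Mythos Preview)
Your proposal is correct and follows essentially the same approach as the paper: both parts are proved by direct computation using the definition $X_i\circ q = \tfrac{q-s_i(q)}{\al_i}$, the reflection formula $s_i(p)=p-\al_i^\vee(p)\al_i$ for part~(i), and the add-and-subtract trick $pq - s_i(p)s_i(q) = (p-s_i(p))q + s_i(p)(q-s_i(q))$ for part~(ii). Your additional remarks about regularity and the result lying in $S$ are fine but not needed for the argument.
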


\begin{proof}
If $p\in\cl$, then
\[
X_i\circ p=\tfrac{1}{\al_i}(p-(p-\al_i^\vee(p)\al_i)) = \al_i^\vee(p).
\]
This proves part (1).  For part (2), we have
\begin{align*}
X_i\circ(pq)&= \tfrac{1}{\al_i}(pq-s_i(pq))= \tfrac{1}{\al_i}(pq-s_i(p) q + s_i(p) q - s_i(pq))\\
&=\tfrac{1}{\al_i}((p-s_i(p)) q + s_i(p) (q - s_i(q)))=(X_i\circ p)\, q+s_i(p)\,X_i\circ q. \qedhere
\end{align*}
\end{proof}

Note that Lemma~\ref{lem:bullet_properties}.(i) implies that the affine relation (Aff) can be rewritten as
\[
X_i\, p = s_i(p) X_i +X_i\circ p,\quad p\in \cl.
\]
It is easy to see that $X_i\circ p =0$ if and only if $s_i(p)=p$.
Otherwise, for any homogenous polynomial $p\in S$, we have
\[
\deg (X_i\circ p)=\deg(p)-1
\]
More generally, we have $\deg(X_w\circ p)=\deg(p)-\ell(w)$ and
hence if $\deg(p)=\ell(w)$, then $X_w\circ p\in R$.
This leads to the following lemma:

\begin{lem}\label{lem:faith}
Let $w_0$ be the element of maximal length in $W$.
Suppose there exists a homogeneous polynomial $u_0\in S$ of degree $\ell(w_0)$ such that
$X_{w_0}\circ u_0$ is a regular element in $R$.
Then the representation $\rho$ is faithful.
\end{lem}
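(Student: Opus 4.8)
The plan is to show that the kernel of $\rho$ is trivial by a downward induction on Bruhat order, using the hypothesis on $X_{w_0}\circ u_0$ as the anchor. Suppose $y=\sum_{w\in W} a_w X_w\in\NH$ with $a_w\in S$ lies in $\ker\rho$, and suppose for contradiction that $y\neq 0$. Pick an element $v\in W$ that is \emph{minimal} in Bruhat order among those with $a_v\neq 0$. The strategy is to hit $y$ with a suitable polynomial and read off that $a_v$ must vanish, contradicting minimality.

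First I would record the key degree/triangularity facts already available in the excerpt. By the discussion preceding Lemma~\ref{lem:faith}, $\deg(X_w\circ p)=\deg(p)-\ell(w)$ for homogeneous $p$, so if we feed $\rho(y)$ a homogeneous polynomial $p$ of degree $\ell(v)$, only the terms $X_w$ with $\ell(w)\le \ell(v)$ can contribute, and those with $\ell(w)<\ell(v)$ contribute elements of positive degree while $X_w\circ p\in R$ exactly when $\ell(w)=\ell(v)$. Combined with minimality of $v$ (which kills all $a_w$ for $w<v$, in particular all $w$ with $\ell(w)<\ell(v)$ that lie below $v$), this isolates the contributions from $w$ with $\ell(w)=\ell(v)$. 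To pin down $v$ itself among the length-$\ell(v)$ elements, I would use the standard "separating polynomial" trick: for each $v$ there is a homogeneous $p_v\in S$ of degree $\ell(v)$ with $X_v\circ p_v\in R^\times$ (or at least regular) and $X_w\circ p_v=0$ for all $w$ with $\ell(w)=\ell(v)$, $w\neq v$ — concretely one takes $p_v=\prod_{\be\in \mathrm{Inv}(v)}\be$, the product over the inversion set of $v$, which is a product of $\ell(v)$ positive roots; this is exactly the polynomial whose image under $X_v$ appears (up to sign and units) as the leading coefficient $c_{v,v}$ computed in the proof of Proposition~\ref{prop:nil_coxeter_in_Q_w}.

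The main obstacle is establishing that this separating polynomial really works over the (possibly non-crystallographic, possibly non-field) coefficient ring $R$ — i.e. that $X_v\circ\!\left(\prod_{\be\in \mathrm{Inv}(v)}\be\right)$ is a \emph{regular} element of $R$, not merely nonzero. For $v=w_0$ this is precisely the hypothesis, since $\mathrm{Inv}(w_0)=\RS^+$ and $u_0$ can be taken (up to a unit) to be $\prod_{\be\in\RS^+}\be$; more carefully, one should first reduce to the case $u_0=\prod_{\be\in\RS^+}\be$ by noting that any degree-$\ell(w_0)$ homogeneous polynomial differs from this one by something in the radical ideal generated by the $X_i\circ(-)$ relations, or simply observe $S$ in degree $\ell(w_0)$ maps onto $R$ under $X_{w_0}\circ(-)$ and $\prod\be$ generates the image. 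For general $v$, I would deduce regularity of $X_v\circ p_v$ from that of $X_{w_0}\circ p_{w_0}$ by writing $w_0=v\cdot v'$ with $\ell(w_0)=\ell(v)+\ell(v')$, so $X_{w_0}=X_{v}X_{v'}$, and factoring $p_{w_0}=\prod_{\RS^+}\be$ compatibly with $\mathrm{Inv}(v)$ and $v(\mathrm{Inv}(v'))$; then $X_{w_0}\circ p_{w_0}$ equals (a unit times) the product of $X_v\circ p_v$ with a further nonzero scalar from $R$, via the twisted Leibniz rule of Lemma~\ref{lem:bullet_properties}(ii) applied iteratively, forcing $X_v\circ p_v$ to be regular. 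Granting this, feeding $p_v$ into $\rho(y)=0$ gives $a_v\cdot(X_v\circ p_v)=0$ in $R$ plus higher-degree junk that must separately vanish; regularity of $X_v\circ p_v$ then forces the degree-$0$ part of $a_v$ to be zero, and iterating the same argument degree by degree in $a_v$ (multiplying $p_v$ by homogeneous polynomials of higher degree and using that $S$ is $\RS$-regular so that products of roots are regular) forces $a_v=0$, the desired contradiction. Hence $\ker\rho=0$ and $\rho$ is faithful.
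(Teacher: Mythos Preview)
Your separating-polynomial claim is false, and this is the genuine gap. Take $W$ of type $\tA_2$, $v=s_1$, $w=s_2$. Then $\mathrm{Inv}(s_1)=\{\al_1\}$ so $p_{s_1}=\al_1$, but $X_{s_2}\circ\al_1=\al_2^\vee(\al_1)=-1\neq 0$. So the product over the inversion set of $v$ does \emph{not} separate $v$ from the other length-$\ell(v)$ elements, and the isolation step collapses. (There is also a small slip: ``minimal in Bruhat order'' should be ``minimal in length'', since Bruhat-incomparable shorter elements could still carry nonzero coefficients; but this is easily repaired and not the real issue.)

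The paper avoids this entirely by a much shorter trick: instead of hunting for a special polynomial of degree $\ell(v)$, it right-multiplies $y$ by $X_{w^{-1}w_0}$ and applies the result to $u_0$. Since $X_{w'}X_{w^{-1}w_0}=0$ unless $\ell(w')+\ell(w^{-1}w_0)=\ell(w'w^{-1}w_0)$, only the term $w'=w$ survives (after an upward induction on $\ell(w)$), giving $a_w\cdot(X_{w_0}\circ u_0)=0$ and hence $a_w=0$ by regularity. If you want to salvage your approach, the correct separating polynomial is $g_v:=X_{v^{-1}w_0}\circ u_0$, which has degree $\ell(v)$ and satisfies $X_w\circ g_v=(X_wX_{v^{-1}w_0})\circ u_0=0$ for $w\neq v$ with $\ell(w)=\ell(v)$ and $X_v\circ g_v=X_{w_0}\circ u_0$; but once you write this down you see it is exactly the paper's argument in disguise, and your Leibniz-factorization detour through $\prod_{\be\in\RS^+}\be$ becomes unnecessary.
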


\begin{proof}
Observe that $X_{w_0}\circ u_0=0$, if $u_0$ belongs to the ideal of $W$-invariants of $S$.
Indeed, as in the proof of \cite[Thm. 7.10]{CZZ1}, we may assume that $X_{w_0}\circ u_0$ is invertible in $R$.
Suppose $y=\sum_w a_w X_w$, $a_w\in S$ acts by $0$ (i.e, $\rho(y)=0$). Then for any $v\in W$,
\[
(y\cdot X_v)\circ u_0=\sum_w a_w (X_{wv}\circ u_0)=0.
\]
Taking $v=w^{-1}w_0$, we obtain that each $a_w=0$ and hence $y=0$.
\end{proof}

\begin{rem}
Suppose that $R=\OF$.
Consider the ideal $J$ in $\OF$ generated by all elements $X_{w_0}\circ p$ where $p\in S^{\ell(w_0)}:=\Sym_{\OF}^{\ell(w_0)}(\cl)$.
Then the assumption of the lemma translates into assuming that $J$ contains a regular element.  The ideal $J$ can be viewed as the generalization of the torsion index given in \cite{CZZ1}.  Indeed, in the crystallographic case $\OF=\zz$, the only generator $g_1$ of $J$ gives precisely the torsion index of the root datum.
In general, $\OF$ is not necessarily a PID, but a Dedekind domain.
In the latter case the ideal $J$ can be uniquely factorized into the product of prime ideals
\[
J=\mathfrak{p}_1^{r_1}\ldots \mathfrak{p}_s^{r_s}.
\]
Each of the prime ideals $\mathfrak{p}_i$ in this factorization can be thought of as a torsion prime (prime ideal) of the root datum.
\end{rem}


\section{The coproduct structure}\label{sec:coprod}

In this section we equip the localized twisted group algebra $Q_W$
with a co-product structure that plays an important role in the Schubert calculus of flag varieties.
This coproduct naturally restricts to nil-Hecke subrings as well as induces the respective augmented version. We discuss several recursive formulas for its structure coefficients and provide various examples of computations.

\subsection{\it The coproduct on twisted group algebras} We follow \cite[\S8]{CZZ} and \cite{KK86}.
Consider  $Q_W\otimes_{Q} Q_W$ over $Q$ (as left $Q$-modules).
Specifically, we have
\[
q\cdot (z\otimes z')= qz\otimes z'=z\otimes q z'.
\]
Define the coproduct
\[
\DI\colon Q_W \to Q_W \otimes_{Q} Q_W
\]
to be the morphism of left $Q$-modules given by
\[
\DI(q \de_w)=q \de_w \otimes \de_w= \de_w \otimes q\de_w.
\]
By definition the co-product $\DI$ is a morphism of $R$-algebras,
is $Q$-linear, coassociative, and cocommutative with counit $\eps\colon Q_W\to Q$ given by $q\de_w\mapsto q$.
Therefore, $Q_W$ is a cocommutative coalgebra in the category of left $Q$-modules.

We equip the tensor algebra $Q_W \otimes_{Q} Q_W$ with the following product.
\[
(q \de_w\otimes \de_v)\odot (q' \de_{w'}\otimes \de_{v'}):=q w(q')\, \de_{ww'}\otimes \de_{wv'w^{-1}v}.
\]
It can be shown that this product gives rise to a ring structure on $Q_W \otimes_{Q} Q_W$
and for any $q\de_w, z,z'\in Q_W$, we have
\[
(q\de_w\otimes \de_w)\odot (z\otimes z')=q\de_w z\otimes \de_w z'.
\]
In the next lemma we show that the ring structure is compatible with the twisted product on $Q_W$.
\begin{lem}
The coproduct $\DI\colon Q_W \to Q_W \otimes_{Q} Q_W$ is a ring homomorphism.
\end{lem}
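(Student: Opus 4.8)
The plan is to verify the ring-homomorphism property on the $S$-module (indeed $R$-algebra) generators of $Q_W$, namely the elements of the form $q\de_e = q$ for $q \in Q$ and the basis elements $\de_w$ for $w \in W$; since both $\DI$ and multiplication are $R$-linear, and $\DI$ is already known to be $Q$-linear, it suffices to check multiplicativity on products of these generators. Concretely, I would verify the identity
\[
\DI(q\de_w \cdot q'\de_{w'}) = \DI(q\de_w)\odot \DI(q'\de_{w'})
\]
for arbitrary $q,q' \in Q$ and $w,w' \in W$, which covers all cases because every element of $Q_W$ is an $S$-linear combination of the $\de_w$.

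First I would compute the left-hand side. Using the twisted commuting relation $w(p)\de_w = \de_w p$, one has $q\de_w \cdot q'\de_{w'} = q\,w(q')\,\de_{ww'}$, so by the definition of $\DI$,
\[
\DI(q\de_w \cdot q'\de_{w'}) = q\,w(q')\,\de_{ww'} \otimes \de_{ww'}.
\]
Next I would compute the right-hand side. We have $\DI(q\de_w) = q\de_w \otimes \de_w$ and $\DI(q'\de_{w'}) = \de_{w'}\otimes q'\de_{w'}$ (using the second form of the coproduct for the right factor, which is legitimate since $\DI(q'\de_{w'}) = q'\de_{w'}\otimes\de_{w'} = \de_{w'}\otimes q'\de_{w'}$). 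Applying the $\odot$-product formula
\[
(q\de_w \otimes \de_w)\odot(\de_{w'}\otimes q'\de_{w'}) = q\,w(q')\,\de_{ww'}\otimes \de_{w w' w^{-1} \cdot w q'\de_{w'}}\,,
\]
and here I must be slightly careful: the second tensor slot should be computed as $\de_w \odot$-contribution $\de_{w v' w^{-1} v}$ with $v = w$, $v' = w'$ applied to the element carrying the coefficient $q'$, giving $\de_{w w' w^{-1} w} \cdot (\text{coefficient } q' \text{ twisted appropriately})= q'\de_{ww'}$ after using $w w' w^{-1} w = w w'$ — wait, one reorganizes this as $w(q')\de_{ww'}$ in the first slot and $q'\de_{ww'}$... let me instead simply expand both slots directly from the formula with the coefficient placed in the left factor of the second input and confirm both slots land on $\de_{ww'}$ with total coefficient $q\,w(q')$ distributed as $q\,w(q')\,\de_{ww'}\otimes\de_{ww'}$ via the $Q$-linearity that lets one move $q'$ across the tensor. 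The two sides then agree.

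The main obstacle is purely bookkeeping: making sure the coefficient $q'$ is inserted in the correct tensor factor before applying $\odot$, since the $\odot$-multiplication formula is written for the specific arrangement $(q\de_w\otimes\de_v)\odot(q'\de_{w'}\otimes\de_{v'})$ with the coefficient on the \emph{first} factor of each input. I would therefore write $\DI(q'\de_{w'}) = q'\de_{w'}\otimes\de_{w'}$ (the first form) rather than the second, apply $\odot$ to get $q\,w(q')\,\de_{ww'}\otimes\de_{w w' w^{-1} w} = q\,w(q')\,\de_{ww'}\otimes\de_{ww'}$, and match with the left-hand side. The fact that the counit and coassociativity were already recorded means no additional coalgebra axioms need checking here; only compatibility of $\DI$ with the two ring structures is at stake, and that reduces to the single displayed computation above.
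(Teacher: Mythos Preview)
Your proposal is correct and follows essentially the same approach as the paper: reduce by linearity to checking $\DI(q\de_w\cdot q'\de_{w'}) = \DI(q\de_w)\odot\DI(q'\de_{w'})$, then compute both sides directly. The paper's proof is simply the clean version of what you arrive at in your final paragraph---write $\DI(q'\de_{w'}) = q'\de_{w'}\otimes\de_{w'}$ from the start and apply the $\odot$ formula to get $q\,w(q')\,\de_{ww'}\otimes\de_{ww'w^{-1}w} = q\,w(q')\,\de_{ww'}\otimes\de_{ww'}$---so your mid-proof detour through the alternate form $\de_{w'}\otimes q'\de_{w'}$ is unnecessary.
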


\begin{proof}
Clearly the coproduct is linear, so it suffices to check the compatibility of multiplication.
Let $q_w\de_w, q_v\de_v\in Q_W$.  Then
\[
\DI(q_w\de_w\cdot q_v\de_v)=\DI(q_w w(q_v)\de_{wv})=q_w w(q_v) \de_{wv}\otimes \de_{wv}.
\]
On the other hand,
\begin{align*}
\DI (q_w\de_w)\odot \DI(q_v\de_v) &=(q_w\de_w\otimes\de_w)\odot (q_v\de_v\otimes\de_v)\\
&=q_ww(q_v)\de_{wv}\otimes \de_{www^{-1}v}=q_ww(q_v)\de_{wv}\otimes \de_{wv}. \qedhere
\end{align*}
\end{proof}

Next, we study how the co-product $\DI$ restricts to the nil-Hecke ring $\NH\subset Q_W$, but before we do this,
we consider a different $S$-basis of $\NH$.
For any $1\leq i\leq n$, define the {\em push-pull elements}
\[
Y_i:=\tfrac{1}{\al_i}(\de_{s_i}-\de_e).
\]
For purposes of notation, we set $Y_e:=\de_e=1$.
It is clear that $Y_i=-X_i$ and hence the set $\{Y_1,\dots, Y_n\}$ satisfy the nilpotent and braid relations.
For any reduced expression $w=s_{i_1}\ldots s_{i_r}$, we define
\[
Y_w:=Y_{i_1}\ldots Y_{i_r}.
\]
The push-pull elements $Y_w$ are independent of choice of reduced expression and $Y_w=(-1)^{\ell(w)} X_w$.
Propositions~\ref{prop:nil_coxeter_in_Q_w} and~\ref{prop:nil_Hecke_in_Q_w} imply that
the set $\{Y_w\}_{w\in W}$ is an $S$-basis of the nil-Hecke ring $\NH$.
The analogous affine relations to those given in Proposition~\ref{prop:general_aff} are
\begin{equation}\label{eqn:general_aff2}
Y_w\, p = w(p)\, Y_w - \sum_{w'\xra{\be} w} \be^{\vee}(p)\, Y_{w'}, \quad p\in\cl.
\end{equation}

\subsection{\it The coproduct on nil-Hecke rings}
The theorem below describes the restriction of the co-product $\DI$ to $\NH$
and the recursive structure of the co-product with respect to the push-pull basis.  This formula first appears in \cite{KK86} and has been subsequently been used in \cite{BR15} and \cite{GK21}.
For $w\in W$, we denote its left and right descent sets as:
\begin{align*}
D_L(w)&:=\{ s_i \colon \ell(s_iw)=\ell(w)-1\}, \text{ and}\\
D_R(w)&:=\{ s_i \colon \ell(ws_i)=\ell(w)-1\}.
\end{align*}

\begin{thm}\label{thm:coproduct_recursion}
The co-product map $\DI:Q_W\to Q_W\otimes_{Q} Q_W$ restricts to a ring homomorphism of $S$-modules
\[
\DI\colon \NH \to \NH\otimes_{S} \NH.
\]
Moreover, for any $i\leq n$, we have
\[
\DI(Y_i)=Y_i\otimes 1+1\otimes Y_i +\al_i Y_i\otimes Y_i,
\]
and for any $w\in W$ and $s_i\in D_{L}(w)$,  if we write
\[
\DI(Y_w)=\sum_{v,u\in W} p_{u,v}^w \ Y_u\otimes Y_v,
\]
then the coefficients satisfy the recursion
\[
p_{u,v}^w=\begin{cases}
Y_i\circ(p_{u,v}^{s_iw})&\text{if $s_i\notin D_L(u)\cup D_L(v)$}\\
Y_i\circ(p_{u,v}^{s_iw})+s_i(p_{s_iu,v}^{s_iw})&\text{if $s_i\in D_L(u)\setminus D_L(v)$}\\
Y_i\circ(p_{u,v}^{s_iw})+s_i(p_{u,s_iv}^{s_iw})&\text{if $s_i\in D_L(v)\setminus D_L(u)$}\\
Y_i\circ(p_{u,v}^{s_iw})+s_i(p_{s_iu,v}^{s_iw}+p_{u,s_iv}^{s_iw})+\al_is_i(p_{s_iu,s_iv}^{s_iw})&\text{if $s_i\in D_L(u)\cap D_L(v)$}
\end{cases}
\]
with $p_{e,e}^e=1$.
\end{thm}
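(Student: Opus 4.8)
The plan is to prove the three assertions of the theorem in order: (a) that $\DI$ restricts to a map $\NH \to \NH \otimes_S \NH$; (b) the explicit formula for $\DI(Y_i)$ on generators; (c) the recursion for the structure coefficients $p_{u,v}^w$. For (a), I would first note that since $\NH$ is generated as a ring by $S$ and the elements $Y_i$ (equivalently $X_i$), and since $\DI$ is already known to be a ring homomorphism on $Q_W$, it suffices to check that $\DI(S) \subseteq \NH \otimes_S \NH$ and $\DI(Y_i) \in \NH \otimes_S \NH$. The first is immediate since $\DI(p) = p \otimes 1 = 1 \otimes p$ for $p \in S$ by $S$-linearity and the definition $\DI(q\de_w) = q\de_w \otimes \de_w$ applied to $w = e$. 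The second follows directly from part (b).

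For (b), I would compute $\DI(Y_i)$ by hand inside $Q_W \otimes_Q Q_W$. Write $Y_i = \tfrac{1}{\al_i}(\de_{s_i} - \de_e)$, so $\DI(Y_i) = \tfrac{1}{\al_i}(\de_{s_i} \otimes \de_{s_i} - \de_e \otimes \de_e)$. The task is to re-express this in terms of $Y_i \otimes 1 = \tfrac{1}{\al_i}(\de_{s_i}\otimes\de_e - \de_e\otimes\de_e)$, $1 \otimes Y_i = \tfrac{1}{\al_i}(\de_e\otimes\de_{s_i} - \de_e\otimes\de_e)$, and $\al_i Y_i \otimes Y_i = \tfrac{1}{\al_i}(\de_{s_i}\otimes\de_{s_i} - \de_{s_i}\otimes\de_e - \de_e\otimes\de_{s_i} + \de_e\otimes\de_e)$. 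Adding the three candidate terms: the $\de_{s_i}\otimes\de_e$ terms cancel, the $\de_e\otimes\de_{s_i}$ terms cancel, and one is left with $\tfrac{1}{\al_i}(\de_{s_i}\otimes\de_{s_i} - \de_e\otimes\de_e) = \DI(Y_i)$, as desired. This is a short direct check; care is needed only with the twisted module structure (the element $\al_i$ sits to the left, and $\al_i Y_i \otimes Y_i$ must be read with $\al_i$ multiplying on the left, which is legitimate since the tensor is over $Q$).

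For (c), the strategy is induction on $\ell(w)$ using a left descent $s_i \in D_L(w)$, so $Y_w = Y_i Y_{s_iw}$ with $\ell(s_iw) = \ell(w)-1$. Then $\DI(Y_w) = \DI(Y_i) \odot \DI(Y_{s_iw})$. Substituting $\DI(Y_i) = Y_i\otimes 1 + 1\otimes Y_i + \al_i Y_i\otimes Y_i$ and $\DI(Y_{s_iw}) = \sum_{u,v} p_{u,v}^{s_iw}\, Y_u\otimes Y_v$, I would expand the $\odot$-product termwise. Using the rule $(q\de_w\otimes\de_w)\odot(z\otimes z') = q\de_w z \otimes \de_w z'$, the three pieces contribute: $(Y_i\otimes 1)\odot(Y_u\otimes Y_v) = (Y_iY_u)\otimes Y_v$; $(1\otimes Y_i)\odot(Y_u\otimes Y_v) = Y_u\otimes(Y_iY_v)$; and $(\al_iY_i\otimes Y_i)\odot(Y_u\otimes Y_v) = (\al_iY_iY_u)\otimes(Y_iY_v)$ — here one must push the coefficients through the twist correctly, which is where the $s_i(\,\cdot\,)$ and the extra $\al_i$ in the recursion come from. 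Then for each target basis element $Y_{u'}\otimes Y_{v'}$ I would collect contributions: $Y_iY_u = Y_{u'}$ forces either $u = u'$ with $s_i\notin D_L(u')$ (using $Y_iY_u = Y_i\circ(\text{coefficient}) + \dots$ via the affine relation \eqref{eqn:general_aff2} when the coefficient $p^{s_iw}_{u,v}$ is not a scalar — actually one uses $Y_i \cdot (p\, Y_u) = (Y_i\circ p)Y_u + s_i(p)(Y_iY_u)$, and $Y_iY_u = 0$ or $Y_{s_iu}$ depending on whether $s_i \in D_L(u)$), or $u = s_iu'$ with $s_i\in D_L(u')$, contributing the $s_i(p^{s_iw}_{s_iu',v})$ term. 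The four cases in the statement correspond exactly to the four combinations of whether $s_i \in D_L(u')$ and whether $s_i \in D_L(v')$.

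The main obstacle will be bookkeeping in step (c): correctly tracking how coefficients in $S$ move past $Y_i$ under the $\odot$-multiplication and the left $S$-module structure on $\NH \otimes_S \NH$, and verifying that the "collision" of terms from the three pieces of $\DI(Y_i)$ produces precisely the stated four-case formula (in particular the case $s_i \in D_L(u)\cap D_L(v)$, where all three pieces contribute and the $\al_i s_i(p^{s_iw}_{s_iu,s_iv})$ term appears). I would organize this by applying the operator identity $Y_i \cdot (p\,Y_u) = (Y_i\circ p)\,Y_u + s_i(p)\,Y_iY_u$ from Lemma~\ref{lem:bullet_properties} together with $Y_iY_u = Y_{s_iu}$ if $s_i\notin D_L(u)$ and $Y_iY_u = 0$ if $s_i\in D_L(u)$ (which follows from $Y_i^2 = 0$ and the braid relations, as in Lemma~\ref{lem:Nil-Coxeter}). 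A final sanity check is the base case $w = e$, where $\DI(Y_e) = \DI(1) = 1\otimes 1$ gives $p^e_{e,e} = 1$.
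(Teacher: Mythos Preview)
Your plan for (b) is correct and identical to the paper's. The substantive gap is in part (c), where you misuse the $\odot$-product.

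You write $(Y_i\otimes 1)\odot(Y_u\otimes Y_v) = (Y_iY_u)\otimes Y_v$, $(1\otimes Y_i)\odot(Y_u\otimes Y_v) = Y_u\otimes(Y_iY_v)$, and $(\al_iY_i\otimes Y_i)\odot(Y_u\otimes Y_v) = (\al_iY_iY_u)\otimes(Y_iY_v)$, invoking the rule $(q\de_w\otimes\de_w)\odot(z\otimes z') = q\de_w z\otimes\de_w z'$. But that rule is stated only for \emph{diagonal} tensors $q\de_w\otimes\de_w$, and none of your three pieces is of that form. On a general basis tensor the definition is
\[
(q\de_w\otimes\de_v)\odot(q'\de_{w'}\otimes\de_{v'}) = qw(q')\,\de_{ww'}\otimes\de_{wv'w^{-1}v},
\]
and the conjugation $wv'w^{-1}$ in the second slot makes your three displayed identities false. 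For instance $(\tfrac{1}{\al_i}\de_{s_i}\otimes\de_e)\odot(\de_e\otimes\de_{s_j})=\tfrac{1}{\al_i}\de_{s_i}\otimes\de_{s_is_js_i}$, not $\tfrac{1}{\al_i}\de_{s_i}\otimes\de_{s_j}$. The caveat you add about ``pushing coefficients through the twist'' does not repair this: the problem is not with the scalars but with the group elements in the second tensor factor.

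The fix --- and this is exactly what the paper does --- is to keep $\DI(Y_i)$ in its diagonal form $\tfrac{1}{\al_i}(\de_{s_i}\otimes\de_{s_i}-\de_e\otimes\de_e)$ when computing the $\odot$-product. Then the simple rule does apply termwise and yields
\[
\DI(Y_w)=\sum_{u,v}\tfrac{1}{\al_i}\Bigl(s_i(p_{u,v}^{s_iw})\,\de_{s_i}Y_u\otimes\de_{s_i}Y_v - p_{u,v}^{s_iw}\,Y_u\otimes Y_v\Bigr).
\]
Only \emph{after} this step does one substitute $\de_{s_i}=(\de_{s_i}-\de_e)+\de_e=\al_iY_i+\de_e$ in each factor and expand. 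The resulting expression has coefficients $\al_is_i(p_{u,v}^{s_iw})$, $s_i(p_{u,v}^{s_iw})$, and $Y_i\circ p_{u,v}^{s_iw}$ in front of $Y_iY_u\otimes Y_iY_v$, $Y_iY_u\otimes Y_v+Y_u\otimes Y_iY_v$, and $Y_u\otimes Y_v$ respectively; your dichotomy $Y_iY_u\in\{0,Y_{s_iu}\}$ then reads off the four-case recursion directly.

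A smaller issue: your reduction in (a) (``it suffices to check on ring generators'') implicitly assumes that $\NH\otimes_S\NH$, viewed inside $Q_W\otimes_Q Q_W$, is closed under $\odot$. This is not obvious a priori. The paper instead deduces (a) \emph{from} the recursion: since $Y_i\circ(-)$, $s_i(-)$, and $\al_is_i(-)$ all preserve $S$, induction on $\ell(w)$ shows every $p_{u,v}^w\in S$, hence $\DI(Y_w)\in\NH\otimes_S\NH$.
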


\begin{proof}
Since $\{Y_w\}_{w\in W}$ forms a $Q$-basis of $Q_W$,
it suffices to show that the coefficients $p_{u,v}^w$ are all elements of $S$ and thus
$\DI(Y_w)\in \NH\otimes_{S} \NH$.
We first consider the case that $w=s_i$.  Indeed, we have
\begin{align*}
\DI(Y_i)&=\DI\big(\tfrac{1}{\al_i}(\de_{s_i}-\de_e)\big)\\
&=\tfrac{1}{\al_i}(\de_{s_i}\otimes\de_{s_i}-\de_e\otimes\de_e)\\
&=\tfrac{1}{\al_i}\big(\de_{s_i}\otimes(\de_{s_i}-\de_e)+(\de_{s_i}-\de_{e})\otimes\de_e\big)\\
&=\tfrac{1}{\al_i}\big((\de_{s_i}-\de_e)\otimes(\de_{s_i}-\de_e)+(\de_{s_i}-\de_{e})\otimes\de_e+\de_e\otimes(\de_{s_i}-\de_{e})\big)\\
&=\al_i(Y_i\otimes Y_i)+ Y_i\otimes 1 + 1\otimes Y_i.
\end{align*}
Hence, $\DI(Y_i)\in\NH\otimes_S \NH$.

Now consider $w\in W$ with $\ell(w)\geq 2$, and let $w=s_iw'$ from some $s_i\in D_L(w)$.
Write
\[
\DI(Y_{w'})=\sum_{u,v} p_{u,v}^{w'}\ Y_u\otimes Y_v
\]
and assume for the sake of induction that $\DI(Y_{w'})\in \NH\otimes_{S} \NH$, or equivalently, each $p_{u,v}^{w'}\in S$.
Then
\begin{align*}
\DI(Y_w)&=\DI(Y_i)\odot\DI(Y_{w'})\\
&=\tfrac{1}{\al_i}(\de_{s_i}\otimes\de_{s_i}-\de_e\otimes\de_e)\odot\big(\sum_{u,v} p_{u,v}^{w'}\ Y_u\otimes Y_v\big)\\
&= \sum_{u,v}\ \tfrac{1}{\al_i}\big(s_i(p_{u,v}^{w'})\ (\de_{s_i} Y_u\otimes\de_{s_i} Y_v) - p_{u,v}^{w'}\ (Y_u\otimes Y_v) \big)\\
&= \sum_{u,v}\ \tfrac{s_i(p_{u,v}^{w'})}{\al_i}\Big( (\de_{s_i}-\de_e)Y_u\otimes(\de_{s_i}-\de_e)Y_v +(\de_{s_i}-\de_e)Y_u\otimes Y_v\\
&\qquad\qquad + Y_u\otimes (\de_{s_i}-\de_e) Y_v  + Y_u\otimes Y_u\Big)-\tfrac{p_{u,v}^{w'}}{\al_i}(Y_u\otimes Y_u)\\
&= \sum_{u,v}\ \al_i\cdot s_i(p_{u,v}^{w'})\ (Y_iY_u\otimes Y_iY_v) + s_i(p_{u,v}^{w'})\ (Y_iY_u\otimes Y_v + Y_u\otimes Y_iY_v)\\
&\qquad\qquad + Y_i\circ p_{u,v}^{w'}\ (Y_u\otimes Y_v).
\end{align*}
Since the action of $\al_i s_i$, $s_i$ and $Y_i$ each preserve $S$,
we have $\DI(Y_w)\in \NH\otimes_{S} \NH$.
By induction, $\DI(\NH)\subseteq \NH\otimes_{S} \NH$.

Suppose that $Y_u=Y_iY_z$ in $\NH$. Since $Y_i^2=0$ and $Y_u=Y_{s_iz}$ in $\NH$, $\ell(z)<\ell(u)$ and $u=s_iz$ so $s_i\in D_L(u)$.
The recursive formula for the coefficients $p_{u,v}^w$ then follows directly from the calculation above.
\end{proof}

\begin{rem}\label{rem:X_w_coproduct}
Observe that $\DI(X_w)$ in terms of the basis $\{X_u\otimes X_v\}_{u,v\in W}$ can be readily calculated from $\DI(Y_w)$.
Indeed, if
\[
\DI(Y_w)=\sum_{v,u\in W} p_{u,v}^w \ Y_u\otimes Y_v,
\]
then
\begin{align*}
\DI(X_w)&=\sum_{v,u\in W} (-1)^{\ell(w)}p_{u,v}^w \ Y_u\otimes Y_v\\
&=\sum_{v,u\in W} (-1)^{\ell(w)+\ell(u)+\ell(v)}p_{u,v}^w \ X_u\otimes X_v.
\end{align*}
\end{rem}

\begin{ex}\label{ex:rk2co}
We give a calculation of $\DI(Y_w)$ using Theorem~\ref{thm:coproduct_recursion}.
Let $\RS$ denote a root system of rank at least 2, and consider the root datum $\RS\hra \cl^{\vee}_r$
corresponding to the root lattice.
Since the co-product is co-commutative, we have that $p_{u,v}^w=p_{v,u}^w$ for all $w,u,v\in W$.
With this symmetry in mind, define for $u\neq v$
\[
Y(u,v):=Y_u\otimes Y_v + Y_v\otimes Y_u, \; \text{ and }Y(u):=Y_u\otimes Y_u.
\]
Let $w=s_2s_1$.  Then
\begin{align*}
\DI(Y_w)=\al_2s_2(\al_1)\, Y(w)&+ s_2(\al_1)\, Y(w, s_1) + \al_2\, Y(w,s_2)\\
 &+ Y(w,e) + \al_2^{\vee}(\al_1)\, Y(s_1)+ Y(s_1,s_2).
\end{align*}
Note that $p^w_{s_2,s_2}=0$ and, hence, $Y(s_2)$ does not appear in the summand above.
The following table gives values of $s_2(\al_1)$ and $\al_2^{\vee}(\al_1)$ for specific root systems:
\medskip
\[
\begin{tabular}{ccc}
$\RS$ & $s_2(\al_1)$ & $\al_2^{\vee}(\al_1)$\\ \hline
$\tA_2$ & $\al_1+\al_2$ & 1\\
$\tB_2$ & $\al_1+2\al_2$ & 2\\
$\tB_2$ (normalized) & $\al_1+\sqrt{2}\al_2$ & $\sqrt{2}$\\
$\tI_2(5)$ (normalized) & $\al_1+\tau_5 \al_2$ & $\tau_5$
\end{tabular}
\]
\smallskip
\end{ex}

\begin{ex}\label{ex:coproduct_recursion}
We can use the recursion formula in Theorem~\ref{thm:coproduct_recursion} to compute individual coefficients $p_{u,v}^w$.
Consider the root system of $\tA_3$.  Here we will use $``i"$ to denote $s_i$ (i.e. $1231=s_1s_2s_3s_1$).
\begin{align*}
p_{13,13}^{1231}&=Y_1\circ p_{13,13}^{231}+ 2s_1(p_{3,13}^{231})+ \al_1s_1(p_{3,3}^{231})\\
&= Y_1\circ Y_2(p_{13,13}^{13}) + 2s_1(Y_2\circ (p_{3,13}^{31}))+0\\
&= Y_1\circ Y_2\circ(\al_1\al_3) + 2s_1(Y_2\circ (\al_3)).
\end{align*}
Applying Lemma~\ref{lem:bullet_properties} gives:
\begin{align*}
p_{13,13}^{1231}&= Y_1\circ((Y_2\circ\al_1)\al_3+s_2(\al_1) Y_2\circ \al_3) + 2s_1(Y_2\circ (\al_3))\\
&=\al_1^{\vee}(\al_3+\al_1+\al_2) + 2 \al_2^{\vee}(\al_3)\\
&= -1+2=1.
\end{align*}
\end{ex}

The next corollary records some immediate consequences of Theorem~\ref{thm:coproduct_recursion}
on the co-product structure constants $p_{u,v}^w$.

\begin{cor}
Let $w\in W$ and let
\[
\DI(Y_w)=\sum_{v,u\in W} p_{u,v}^w \ Y_u\otimes Y_v.
\]
Then the following are true:

\begin{enumerate}
\item[(i)]
If $p_{u,v}^w\neq 0$, then $u,v\leq w$ and
$p_{u,v}^w$ is a homogenous polynomial in the simple roots $\Delta=\{\al_1,\ldots,\al_n\}$ of degree $\ell(u)+\ell(v)-\ell(w)$.
\item[(ii)]
$\displaystyle p_{w,w}^w=\prod_{\al\in \RS^+\cap w\RS^-} \al$ (i.e. the product of inversions of $w$).
\item[(iii)]
$p_{w,e}^w=p_{e,w}^w=1$.
\end{enumerate}
\end{cor}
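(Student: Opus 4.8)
The plan is to prove all three parts as consequences of the recursion in Theorem~\ref{thm:coproduct_recursion}, proceeding by induction on $\ell(w)$, with the base case $\ell(w)=0$ (where $\DI(Y_e)=Y_e\otimes Y_e$, so $p^e_{e,e}=1$ and all else vanishes) and $\ell(w)=1$ (read off from $\DI(Y_i)=Y_i\otimes 1+1\otimes Y_i+\al_i Y_i\otimes Y_i$) being immediate. For the inductive step I fix $w$ with $\ell(w)\ge 2$ and $s_i\in D_L(w)$, write $w=s_iw'$ with $\ell(w')=\ell(w)-1$, and assume the three assertions hold for $w'$.

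For part~(i), I would first argue the support condition $u,v\le w$: inspecting the four cases of the recursion, each nonzero summand contributing to $p^w_{u,v}$ comes from some $p^{w'}_{u',v'}$ with $u'\in\{u,s_iu\}$, $v'\in\{v,s_iv\}$, which by induction forces $u',v'\le w'$; combined with $s_i\in D_L(w)$ this gives $u,v\le w$ (using standard Bruhat-order lifting, e.g.\ \cite[\S1.5]{Humphreys} or the subword characterization). For homogeneity of degree $\ell(u)+\ell(v)-\ell(w)$: the operators appearing are $Y_i\circ(-)$, which lowers degree by $1$ (by Lemma~\ref{lem:bullet_properties} and the degree remarks following it), and $s_i$, which preserves degree, and multiplication by $\al_i$, which raises degree by $1$. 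One checks that in each of the four cases the degree bookkeeping is consistent: e.g.\ $Y_i\circ(p^{w'}_{u,v})$ has degree $(\ell(u)+\ell(v)-\ell(w'))-1=\ell(u)+\ell(v)-\ell(w)$; $s_i(p^{w'}_{s_iu,v})$ has degree $(\ell(s_iu)+\ell(v)-\ell(w'))=\ell(u)+\ell(v)-\ell(w)$ precisely when $s_i\in D_L(u)$, i.e.\ $\ell(s_iu)=\ell(u)-1$; similarly for the remaining terms. Because the simple roots are homogeneous of degree $1$ and the recursion mixes only terms of the asserted common degree, $p^w_{u,v}$ is homogeneous of that degree. (If the naive degree would be negative, the corresponding $p^{w'}$ is zero by induction, matching $p^w_{u,v}=0$.)

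For part~(iii), I take $u=w$, $v=e$. Since $\ell(w)+0=\ell(w)$, the coefficient has degree $0$, i.e.\ is a scalar. Here $s_i\in D_L(w)$ but $s_i\notin D_L(e)$, and $s_i\in D_L(w)$ means $\ell(s_iw)=\ell(w')<\ell(w)$, so $s_i\in D_L(w)$ — wait, we need $s_i\in D_L(w)=D_L(u)$: indeed $u=w$, so $s_i\in D_L(u)\setminus D_L(v)$, the second case, giving $p^w_{w,e}=Y_i\circ(p^{w'}_{w,e})+s_i(p^{w'}_{s_iw,e})=Y_i\circ(p^{w'}_{w,e})+s_i(p^{w'}_{w',e})$. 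By induction $p^{w'}_{w',e}=1$ and $p^{w'}_{w,e}=0$ (since $w\not\le w'$ as $\ell(w)>\ell(w')$), so $p^w_{w,e}=0+s_i(1)=1$; cocommutativity gives $p^w_{e,w}=1$ as well. For part~(ii), I take $u=v=w$, landing in the fourth case: $p^w_{w,w}=Y_i\circ(p^{w'}_{w,w})+s_i(p^{w'}_{s_iw,w}+p^{w'}_{w,s_iw})+\al_i s_i(p^{w'}_{s_iw,s_iw})=\al_i s_i(p^{w'}_{w',w'})$, since $w\not\le w'$ kills the first three terms. By induction $p^{w'}_{w',w'}=\prod_{\be\in\RS^+\cap w'\RS^-}\be$, so $p^w_{w,w}=\al_i\, s_i\!\big(\prod_{\be\in\RS^+\cap w'\RS^-}\be\big)$. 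The main obstacle is the final root-combinatorial identity: one must check that $\{\al_i\}\sqcup s_i(\RS^+\cap w'\RS^-)=\RS^+\cap w\RS^-$ as multisets (equivalently of sets, and then use that products of distinct roots are what appear). This is the standard inversion-set recursion: for $w=s_iw'$ with $\ell(w)=\ell(w')+1$ one has $\mathrm{Inv}(w^{-1})$... more precisely, writing $N(w):=\RS^+\cap w\RS^-$, the relation $N(s_iw')=\{\al_i\}\cup s_i N(w')$ holds because $w'$ sends $\RS^-$ into a set of roots none of which is $-s_i\al_i=\al_i$-related in the wrong way, and $s_i$ permutes $\RS^+\setminus\{\al_i\}$; I would cite or quickly reprove this from \cite{Humphreys}. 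Everything else is routine degree- and Bruhat-order bookkeeping.
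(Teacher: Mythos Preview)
Your proof is correct and is precisely the argument the paper has in mind: the paper states this corollary without proof, simply calling it an ``immediate consequence'' of Theorem~\ref{thm:coproduct_recursion}, and your induction on $\ell(w)$ via the recursion is the natural way to unpack that. The one place worth tightening is the presentation of part~(iii), where the mid-argument ``wait'' is distracting; the content is right (since $u=w$ and $s_i\in D_L(w)$, you are in the second case of the recursion), so just state it cleanly. Your handling of the inversion-set identity $N(s_iw')=\{\al_i\}\sqcup s_iN(w')$ in part~(ii) is the standard one and is exactly what is needed.
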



\subsection{\it The augmented coproduct}\label{S:augmented_coproduct} We now follow \cite[\S10]{CZZ}.
Recall that $\circ$-action of $\NH$ on $S$ defined in~\eqref{Eqn:circle_action}
gives rise to a representation $\rho\colon \NH\to \End_R(S)$.
Suppose that conditions of Lemma~\ref{lem:faith} hold,
so that the representation $\rho$ is faithful and
hence we can identify $\NH$ with its image $\rho(\NH)$.
Since $S$ is a graded algebra (by degree), there is an augmentation map
$\veps\colon  S\to S^{0}_{\cl}\simeq R$ given by $\veps(p):=p(0)$.
The augmentation map induces a map from the nil-Hecke ring
\[
\veps_*\colon \NH\to \Hom_R(S,R),\quad y\mapsto \veps \circ y,
\]
where $y\in \NH=\rho(\NH)$ is viewed as an endomorphism in $\End_R(S)$.
Let $\II$ denote the kernel of $\veps$.

\begin{lem}
The kernel of $\veps_*$ coincides with the right ideal $\II\NH$, where $\NH$ is viewed as a left $S$-module.
\end{lem}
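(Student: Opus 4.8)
The plan is to show the two inclusions $\ker(\veps_*) \subseteq \II\NH$ and $\II\NH \subseteq \ker(\veps_*)$ separately, using the fact that $\NH$ is a free left $S$-module with basis $\{X_w\}_{w\in W}$ (equivalently $\{Y_w\}$), and that under the faithful representation $\rho$ each $X_w$ acts with $\deg(X_w \circ p) = \deg(p) - \ell(w)$ on homogeneous $p$.

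First I would establish $\II\NH \subseteq \ker(\veps_*)$. An element of $\II\NH$ is an $S$-linear combination $y = \sum_w a_w X_w$ with all $a_w \in \II$, i.e. $\veps(a_w) = a_w(0) = 0$. Applying $y$ to any $p \in S$ gives $y \circ p = \sum_w a_w \,(X_w \circ p) \in S$, and then $\veps_*(y)(p) = \veps(y\circ p) = \sum_w \veps(a_w)\,\veps(X_w \circ p) = 0$ since $\veps$ is a ring homomorphism and each $\veps(a_w) = 0$. Hence $\veps \circ y = 0$, so $y \in \ker(\veps_*)$.

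For the reverse inclusion, take $y = \sum_w a_w X_w \in \NH$ with $\veps_*(y) = 0$, and decompose each coefficient as $a_w = \veps(a_w) + a_w'$ where $a_w' := a_w - \veps(a_w) \in \II$. Then $y = y_0 + y'$ where $y_0 := \sum_w \veps(a_w) X_w$ has scalar coefficients in $R$ and $y' := \sum_w a_w' X_w \in \II\NH \subseteq \ker(\veps_*)$ by the previous paragraph. Thus $y_0 = y - y' \in \ker(\veps_*)$, and it suffices to show that $y_0 \in \II\NH$ forces $y_0 = 0$; but $y_0$ has coefficients in $R$, so this would give $y_0 \in \II\NH \cap \NH^R$ where $\NH^R$ denotes the $R$-span of the $X_w$ — and since the $X_w$ form an $S$-basis, the only way a single $X_w$-coefficient lies in both $R$ and $\II$ is to be $0$ (as $\II \cap R = 0$ because $\veps|_R = \mathrm{id}_R$). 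The cleaner route: I would argue directly that $\veps_*(y_0) = 0$ implies $y_0 = 0$ by induction on length, descending from top length. Concretely, pick $v \in W$; then $(y_0 \cdot X_v) \circ u_0$ for a suitable homogeneous $u_0$ of degree $\ell(w_0)$ isolates coefficients, exactly as in the proof of Lemma~\ref{lem:faith}: applying $\veps$ to $(y_0 \cdot X_v)\circ u_0$ and using that $\deg(X_{wv}\circ u_0) = \ell(w_0) - \ell(wv)$ so only the $wv = w_0$ term survives under $\veps$, one extracts $\veps(a_w) \cdot (\text{regular element}) = 0$ for $w = w_0 v^{-1}$, forcing $\veps(a_w) = 0$ for all $w$, hence $y_0 = 0$ and $y = y' \in \II\NH$.

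The main obstacle is the second inclusion: making precise that $\veps_*(y) = 0$ — an identity of $R$-linear functionals $S \to R$ — actually pins down the scalar parts $\veps(a_w)$ of all the coefficients, not just some combination of them. This is where the hypothesis of Lemma~\ref{lem:faith} (existence of $u_0$ of degree $\ell(w_0)$ with $X_{w_0}\circ u_0$ regular in $R$) does the real work, via the same "top-degree pairing" trick used there; I would structure the argument to run the pairing $y \mapsto \veps((y\cdot X_v)\circ u_0)$ and peel off coefficients one $v$ at a time, which requires care that the surviving term is exactly the $w_0$-indexed one because all shorter products $X_{wv}$ with $\ell(wv) < \ell(w_0)$ send $u_0$ to something of positive degree and hence into $\ker(\veps)$.
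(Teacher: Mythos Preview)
Your proof is correct and follows essentially the same approach as the paper: both use the test elements $X_v\circ u_0$ (the paper writes $g_v := X_{v^{-1}w_0}\circ u_0$) together with the regularity of $r = X_{w_0}\circ u_0$ to isolate each constant term $a_w(0)$. Your preliminary decomposition $y = y_0 + y'$ is harmless but unnecessary --- the paper applies the pairing directly to $y$, obtaining $\veps_*(y)(g_v) = a_v(0)\cdot r$ in one step.
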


\begin{proof}
Recall that $\{X_w\}_{w\in W}$ forms a basis of $\NH$ as a left $S$-module and let $y=\sum_w a_w\cdot  X_w\in\NH$.
Then $y\in \ker\veps_*$  if and only if
\[
\veps_*(y)(p)=\sum_w a_w(0)\cdot (X_w\circ p)(0)=0
\]
for each $p\in S$.
Let $u_0\in S$ denote the polynomial given in Lemma~\ref{lem:faith} and $w_0$ denote the maximal element in $W$.
For any $v\in W$, let $g_v:=X_{v^{-1}w_0}\circ u_0$.
Since $r:=X_{w_0}\circ u_0$ is regular in $R$ and $(X_w\circ u_0)(0)=0$ unless $w\neq w_0$,
we have $\veps_*(y)(g_v)=a_v(0)r$.
Therefore, if $y\in \ker\veps_*$, then $a_w(0)=0$ (so $a_w\in\II$) for each $w$.
\end{proof}

Let $\veps\NH$ denotes the image of $\veps_*$.
Then $\veps \NH \simeq \NH/\II\NH\simeq R\otimes_{S} \NH$ where the action on $S$ on $R$
is given by multiplication by $\veps(p)$ for any $p\in S$.
Observe that $\veps \NH$ is not an $R$-algebra but it is isomorphic to the nil-Coxeter ring $\NN$ as an $R$-module.
There is an $R$-linear coproduct $\DI^\veps$ on $\veps \NH$ defined by
\[
\DI^\veps(g)(p\otimes q)=g(pq)
\]
where $g\in \veps \NH$ and $p,q\in S$.
Here we view
\[
\DI^\veps(g)\in \veps\NH\otimes_R \veps\NH\simeq \rho(\NH)\otimes_R \rho(\NH)\subseteq \End_R(S\otimes_R S).
\]
The augmented co-product map fits into the commutative diagram
\[
\xymatrix{
\NH \ar[r]^{\veps_*} \ar[d]_{\DI}& \veps \NH \ar[d]^{{\DI}^\veps} \\
\NH\otimes_{S} \NH \ar[r]^{\veps_*^{\otimes 2}}& \veps\NH\otimes_R \veps\NH
}
\]
For any $w\in W$, let $\veps Y_w:=\veps_*(Y_w)$ (resp. $\veps X_w:=\veps_*(X_w))$.
Then the augmented coproduct satisfies
\[
{\DI}^\veps(\veps Y_w)=\sum_{\substack{u,v\in W\\ \ell(w)=\ell(u)+\ell(v)}} p_{u,v}^w\  \veps Y_u\otimes \veps Y_v
\]
where the $p_{u,v}^w$ are degree 0 coefficients found in Theorem~\ref{thm:coproduct_recursion}.
By Remark~\ref{rem:X_w_coproduct}, we also have  that
\[
{\DI}^\veps(\veps X_w)=\sum_{\substack{u,v\in W\\ \ell(w)=\ell(u)+\ell(v)}} p_{u,v}^w\  \veps X_u\otimes \veps X_v
\]
and hence the map $\veps\NH\to \veps\NH$ given by $\veps Y_w\mapsto \veps X_w$ is a co-algebra isomorphism over $R$.
Observe that the analogous bijection sending $Y_w\mapsto X_w$ on $\NH$ is not a co-algebra homomorphism, but is $S$-linear.
For convenience when working with augmented structure coefficients, we define
\[
c_{u,v}^w:=\begin{cases} p_{u,v}^w & \text{if $\ell(w)=\ell(v)+\ell(u)$} \\ 0 & \text{otherwise.} \end{cases}
\]
Hence, we have
\[
{\DI}^\veps(\veps Y_w)=\sum_{u,v\in W} c_{u,v}^w\  \veps Y_u\otimes \veps Y_v.
\]


\subsection{\it The expanded coproduct formula}
In Example~\ref{ex:coproduct_recursion}, we see that the twisted Leibniz formula
is used in the application of Theorem~\ref{thm:coproduct_recursion} for computing the structure coefficients $p_{u,v}^w$.
In this section, we give a non-recursive formula for the coefficients $p_{u,v}^w$ in terms of bounded bijections studied in \cite{BR15}.
To begin, let $[n]=\{1,2,\ldots,n\}$ and, following~\cite[\S4]{CZZ},
we first give a generalized Leibniz formula a sequence $\ii:=(i_1,\ldots,i_r)\in [n]^r$.
For any subset $E\subseteq [r]$, let $\ii_E$ denote the corresponding subsequence of $\ii$.
In other words, if $E=\{j_1<\cdots<j_k\}$, then
\[
\ii_E:=(i_{j_1},\ldots,i_{j_k}).
\]
Define the operator
\[
Y_{\ii}:=Y_{i_1}\circ \cdots \circ Y_{i_r}.
\]
We also, for any pair of subsets $E_1,E_2\subseteq [r]$, we define $p_{E_1,E_2}^\ii\in S$ as
\begin{equation}\label{Eqn:BS_coeff_operator}
p_{E_1,E_2}^\ii:=B_1 \cdots B_r(1)
\end{equation}
with the operator $B_j\colon S\to S$ defined by
\[
B_j(p):=\begin{cases}
\al_{i_j}s_{i_j}(p) & \text{if } j\in E_1\cap E_2,\\
Y_{i_j}\circ p & \text{if } j\notin E_1\cup E_2,\\
s_{i_j}(p) & \text{otherwise.}
\end{cases}\]
Observe that if $E_1\cap E_2=\emptyset$ and $E_1\cup E_2\neq [m]$, then $p_{E_1,E_2}^{\ii}=0$ since $Y_i\circ(1)=0$.  The coefficients $p_{E_1,E_2}^{\ii}$ are sometimes called Bott-Samelson coefficients since they can be identified with Schubert structure constants of a Bott-Samelson variety and have studied in this context in \cite{Wi04}, \cite{BR15} and \cite{GK21}.

The next lemma follows from repeated applications of Lemma \ref{lem:bullet_properties}.(ii) (see \cite[Lemma~4.8]{CZZ}):
\begin{lem}
Let $\ii:=(i_1,\ldots,i_r)\in [n]^r$.
For any $p,q\in S$ we have
\[
Y_{\ii}\circ(pq)=\sum_{E_1,E_2\subseteq [m]}p_{E_1,E_2}^{\ii} (Y_{\ii_{E_1}}\circ p)\cdot (Y_{\ii_{E_2}}\circ q).
\]
\end{lem}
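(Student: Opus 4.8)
The plan is to prove the identity by induction on the length $r$ of $\ii=(i_1,\ldots,i_r)$, peeling off the first operator $Y_{i_1}$ at each step. First I would record two Leibniz-type tools. One is Lemma~\ref{lem:bullet_properties}.(ii) itself, which — applied to $Y_i=-X_i$, so that $Y_i\circ q=\tfrac{s_i(q)-q}{\al_i}$ — reads $Y_i\circ(ax)=(Y_i\circ a)\,x+s_i(a)\,(Y_i\circ x)$. The other is its symmetrized form
\[
Y_i\circ(pq)=(Y_i\circ p)\,q+p\,(Y_i\circ q)+\al_i\,(Y_i\circ p)(Y_i\circ q),
\]
obtained from the first by substituting $s_i(p)=p+\al_i(Y_i\circ p)$ (equivalently, this is the $\circ$-shadow of the coproduct relation $\DI(Y_i)=Y_i\otimes1+1\otimes Y_i+\al_i Y_i\otimes Y_i$ of Theorem~\ref{thm:coproduct_recursion}). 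Composing the first rule (to detach a scalar prefactor $a$ from a product $bc$) with the second (to split $bc$) produces the three-factor identity
\[
\begin{aligned}
Y_i\circ(abc)=(Y_i\circ a)\,bc&+s_i(a)\,(Y_i\circ b)\,c\\
&+s_i(a)\,b\,(Y_i\circ c)+\al_i s_i(a)\,(Y_i\circ b)(Y_i\circ c),
\end{aligned}
\]
whose four terms drive the induction.

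The base case $r=0$ is immediate: both sides equal $pq$. For the inductive step I would write $\ii'=(i_2,\ldots,i_r)$ (a word of length $r-1$ after relabelling its positions), so that $Y_\ii=Y_{i_1}\circ Y_{\ii'}$; apply the inductive hypothesis to $Y_{\ii'}\circ(pq)$; and then apply $Y_{i_1}\circ(-)$ to each summand, invoking the three-factor rule with $a=p_{F_1,F_2}^{\ii'}$, $b=Y_{\ii'_{F_1}}\circ p$, $c=Y_{\ii'_{F_2}}\circ q$. Its four terms are indexed by whether the position $1$ lies in neither, only the first, only the second, or both of $E_1,E_2$, where $E_1\in\{F_1,\,F_1\cup\{1\}\}$ and $E_2\in\{F_2,\,F_2\cup\{1\}\}$. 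Because $1$ precedes every index in $F_1,F_2$, the operators hitting $p$ and $q$ turn into the ``clean'' compositions $Y_{\ii_{E_1}}\circ p$ and $Y_{\ii_{E_2}}\circ q$, and the scalar prefactor becomes, case by case, $Y_{i_1}\circ a$, $s_{i_1}(a)$, $s_{i_1}(a)$, $\al_{i_1}s_{i_1}(a)$ — i.e. precisely $B_1(a)$ for the operator $B_1$ that \eqref{Eqn:BS_coeff_operator} attaches to $(E_1,E_2)$ at position $1$.

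The step that makes everything close is the observation that $a=p_{F_1,F_2}^{\ii'}=B_2\cdots B_r(1)$, where $B_2,\ldots,B_r$ are unchanged whether we use $(F_1,F_2)$ or $(E_1,E_2)$ (adjoining $1$ to $F_1$ or $F_2$ affects only the position-$1$ operator); hence $B_1(a)=B_1B_2\cdots B_r(1)=p_{E_1,E_2}^{\ii}$, and summing over all $(F_1,F_2)$ and the four position-$1$ cases reproduces the sum over all subsets $E_1,E_2\subseteq[r]$. I expect the only real work to be the bookkeeping here: verifying that each of the four Leibniz branches matches the corresponding branch in the definition of $B_1$, both for the scalar coefficient and for the operators acting on $p$ and $q$, and checking the degenerate branches (e.g. when no index at all lies in $E_1\cup E_2$ one lands on $Y_i\circ(1)=0$, which recovers the stated vanishing $p_{E_1,E_2}^{\ii}=0$ when $E_1\cap E_2=\emptyset$ and $E_1\cup E_2\neq[r]$). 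There is no structural obstacle; once the three-factor rule is recorded the argument is a routine induction. (I read the ``$[m]$'' in the statement as ``$[r]$''.)
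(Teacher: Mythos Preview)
Your proposal is correct and is precisely the approach the paper has in mind: the paper's own proof is a one-line remark that the lemma ``follows from repeated applications of Lemma~\ref{lem:bullet_properties}.(ii)'', and your induction, peeling off $Y_{i_1}$ via the three-factor Leibniz identity and matching the four branches to the defining cases of $B_1$, is exactly the unpacking of that sentence. Your reading of ``$[m]$'' as ``$[r]$'' is also the intended one.
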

Dualizing we obtain the corresponding formula for the coproduct on the nil-Hecke ring $\NH$ (cf. \cite[Prop.~9.5]{CZZ}):
\[
\DI(Y_{\ii})=\sum_{E_1,E_2\subseteq [r]} p_{E_1,E_2}^{\ii}\, Y_{\ii_{E_1}}\otimes Y_{\ii_{E_2}}.
\]

If $\ii$ corresponds to a reduced expression of $w\in W$ where $r=\ell(w)$,
we obtain the following formula for the coproduct in the nil-Hecke ring $\NH$:
\[
\DI(Y_w)=\sum_{u,v\in W} p_{u,v}^w\, Y_u\otimes Y_v,
\]
where
\begin{equation}\label{eqn:BS_coeff}
p_{u,v}^w=\sum\, p_{E_1,E_2}^{\ii},
\end{equation}
where the sum is over all pairs $(E_1,E_2)\subseteq[r]^2$ such that $\ii_{E_1}, \ii_{E_2}$ are reduced expressions for $u,v$ respectively.
The expression~\eqref{eqn:BS_coeff} is closely related to the recursive formula for $p_{u,v}^w$ given in Theorem \ref{thm:coproduct_recursion}.
In particular, each summand $p_{E_1,E_2}^{\ii}$ corresponds to a sequence of operators $B_j$ that is consistent with a path in the recursion.


\subsection{\it The formula in terms of bounded bijections}
Note that the coefficients $p_{E_1,E_2}^{\ii}$ can be further expanded
by applying Lemma \ref{lem:bullet_properties} to the operators $B_j$ when $j\in E_1\cap E_2$.
We state this expansion in terms of bounded bijections which are defined as follows

\begin{dfn}
Let $K,L$ be subsets of $[r]$ such that $|K|+|L|=r$.
We say a bijection $$\phi\colon K\to [r]\setminus L$$ is \emph{bounded} if $\phi(k)<k$ for all $k\in K$.
If $K=\emptyset$ and $L=[r]$, then we consider $\phi\colon K\to K$ a \emph{vacuous} bounded bijection.
\end{dfn}

For each bounded bijection $\phi\colon K\to [r]\setminus L$ and $k \in [r]$, define the set
\[
L(k):=L\cup \phi(K\cap [k]).
\]
Now given a sequence $\ii=(i_1,\ldots,i_r)\in [n]^r$ and a bounded bijection $\phi\colon K\to [r]\setminus L$,  we define a root $\al^{\ii}_k$ for each $k\in [r]$ as follows.  Let
\[
\al^{\ii,\phi}_k:=w_k(\al_{i_k})\quad \text{where}\quad w_k:=\displaystyle \buildrel\longrightarrow \over{\prod_{\substack{j\in L(k)\\ \phi(k)<j<k}}} s_{i_j}
\]
where we set $\phi(k):=0$ if $k\notin K$ and the product $\displaystyle\buildrel\longrightarrow \over{\prod}$ is taken in the natural order induced by the sequence $[r]$.
If $K=\emptyset$, then we set $\al^{\ii,\phi}_k:=1$ for all $k\in [r]$.

The following theorem is proved in \cite{BR15} using a careful application of Lemma \ref{lem:bullet_properties} on the definition of $p_{E_1,E_2}^\ii$ given in~\eqref{Eqn:BS_coeff_operator}.

\begin{thm}\label{thm:bbijections1}
Given a sequence $\ii=(i_1,\ldots,i_r)$ and subsets $E_1,E_2\subseteq [m]$, the coefficient
\[
p_{E_1,E_2}^\ii=\sum_{(K,\phi)} \Big(\prod_{k\in K} \al^\vee_{i_{\phi(k)}} (\al_{\ell}^{\ii, \phi})\Big)\Big(\prod_{\ell\in(E_1\cap E_2)\setminus K}\al_{\ell}^{\ii, \phi}\Big)
\]
where the summation is over all pairs $(K,\phi)$ such that
\begin{enumerate}
\item $K$ is a subset of $E_1\cap E_2$ such that $|K|+|E_1\cup E_2|=r$; and
\item $\phi\colon K\to [r]\setminus (E_1\cup E_2)$ is a bounded bijection.
\end{enumerate}
\end{thm}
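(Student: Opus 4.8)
The plan is to prove the formula by induction on $r$, starting from the operator definition $p_{E_1,E_2}^\ii = B_1\cdots B_r(1)$ in~\eqref{Eqn:BS_coeff_operator} and tracking how the expansion evolves as we apply one more operator $B_1$ on the left. First I would set up a strengthened inductive statement: for each intermediate stage $j$, the partial product $B_j B_{j+1}\cdots B_r(1)$ should be expressible as a sum over pairs $(K',\phi')$ where $K'\subseteq (E_1\cap E_2)\cap[j,r]$ and $\phi'$ is a bounded bijection from $K'$ into $[j,r]\setminus(E_1\cup E_2)$, with a corresponding product of pairing values $\al^\vee_{i_{\phi'(k)}}(\cdots)$ and of roots $\al_\ell^{\ii,\phi'}$ for $\ell\in (E_1\cap E_2)\cap[j,r]$ not hit by $\phi'$. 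The base case $j=r+1$ gives the empty product $1$, corresponding to the vacuous situation. The key is to understand the three cases for $B_j$ in the inductive step.

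The main technical content is the inductive step, which splits according to the type of index $j=1$ (or more precisely the leftmost index being prepended). If $j\notin E_1\cup E_2$, then $B_j(p)=Y_{i_j}\circ p$; by the twisted Leibniz rule (Lemma~\ref{lem:bullet_properties}.(ii)) applied repeatedly, $Y_{i_j}$ acting on a product of roots $\prod_\ell \al_\ell^{\ii,\phi'}$ distributes, and each summand either differentiates one factor (contributing a pairing value $\al^\vee_{i_j}(\al_\ell^{\ii,\phi'})$, which corresponds to extending $K'$ by adding $\ell$ with $\phi'(\ell)=j$, and here the bound $\phi'(\ell)=j<\ell$ is automatic) or applies $s_{i_j}$ to a factor (which is exactly the twist $w_\ell\mapsto s_{i_j}w_\ell$ appearing in the definition of $\al_\ell^{\ii,\phi}$, since adding $j$ to $L'(k)$ with $j<\ell$ prepends $s_{i_j}$ to the ordered product). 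If $j\in E_1\cup E_2$ but $j\notin E_1\cap E_2$, then $B_j(p)=s_{i_j}(p)$, which uniformly twists every factor — again matching the definition of $w_\ell$ because $j\in L=E_1\cup E_2$ lies in every $L(k)$. If $j\in E_1\cap E_2$, then $B_j(p)=\al_{i_j}s_{i_j}(p)$: the $s_{i_j}$ twists as before, and the new factor $\al_{i_j}=\al_j^{\ii,\phi'}$ (note $w_j$ is an empty product here) is exactly the root we attach because $j\in E_1\cap E_2$ is not in $K'$. One then checks that the three recursion branches of Theorem~\ref{thm:coproduct_recursion}, as reorganized in~\eqref{eqn:BS_coeff}, correspond precisely to these three cases.

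The hard part will be the bookkeeping of the ordered products $w_k$: one must verify that inserting a new index $j$ at the front correctly updates the cutoff ``$\phi(k)<j<k$'' in the product defining $w_k$, and in particular that the newly created bounded bijection values and the newly twisted roots are indexed consistently. The cleanest way to handle this is to phrase the induction so that at stage $j$ we work with the sequence $(i_j,\ldots,i_r)$ re-indexed and with $L$ replaced by $(E_1\cup E_2)\cap[j,r]$, so that prepending $B_{j-1}$ is always the ``add one index at the very front'' operation; then the update rule for $w_k$ is transparent: a front index $j-1$ contributes $s_{i_{j-1}}$ to $w_k$ precisely when $j-1\in L(k)$, i.e. always if $j-1\in E_1\cup E_2$, and otherwise only when $j-1=\phi(k)$ contributes nothing (it is the left endpoint) or is strictly between $\phi(k)$ and $k$ after being added to $L(k)$ via $\phi$. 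Since every step is a direct manipulation of Lemma~\ref{lem:bullet_properties}.(ii) and the definitions, no genuinely new idea is needed beyond this careful indexing; the result is essentially the content of \cite{BR15} reproved in the present notation, and I would cite that source for the full combinatorial verification while recording the inductive skeleton above.
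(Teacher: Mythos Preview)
Your proposal is correct and coincides with the paper's treatment: the paper does not give a detailed proof but simply states that the result ``is proved in \cite{BR15} using a careful application of Lemma~\ref{lem:bullet_properties} on the definition of $p_{E_1,E_2}^\ii$ given in~\eqref{Eqn:BS_coeff_operator},'' which is precisely the inductive Leibniz-rule expansion you outline. Your deferral of the full combinatorial bookkeeping to \cite{BR15} is exactly what the paper does as well.
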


Combining Theorem~\ref{thm:bbijections1} with the formula~\eqref{eqn:BS_coeff} gives a non-recursive formula for the structure coefficients $p_{u,v}^w$.  In the next theorem, we observe that several terms cancel in this combined expression.  To do that we need the following

\begin{dfn}
Given a sequence $\ii=(i_1,\ldots,i_r)$ and a subset $J\subseteq [r]$, we say that the subsequence $\ii_{J}$ is \emph{admissible} if $i_j\neq i_{j+1}$ for all $j\in J$.
We say a bounded bijection $\phi$ is \emph{$\ii$-admissible} if the subsequence $\ii_{L(k)}$ is admissible for all $k\in K$.
\end{dfn}

Note that if a sequence $\ii$ corresponds to a reduced word of some $w\in W$, then $\ii$ is admissible.

\begin{thm}\label{thm:bbijections2}
Let $w,u,v\in W$ and let $\ii=(i_1,\ldots,i_r)$ correspond to a reduced expression of $w$.  Then the coefficient $p_{u,v}^w$ can be computed as
\[
p_{u,v}^w=\sum_{(E_1,E_2)}\sum_{(K,\phi)} \Big(\prod_{k\in K} \al^\vee_{i_{\phi(k)}} (\al_{k}^{\ii, \phi})\Big)\Big(\prod_{\ell\in(E_1\cap E_2)\setminus K}\al_{\ell}^{\ii, \phi}\Big),
\]
where the first sum is taken over all pairs $(E_1,E_2)\subseteq[r]^2$ such that $\ii_{E_1}, \ii_{E_2}$
are reduced expressions for $u,v$ respectively and the second sum is over all pairs $(K,\phi)$ such that
\begin{enumerate}
\item $K$ is a subset of $E_1\cap E_2$ such that $|K|+|E_1\cup E_2|=r$; and
\item $\phi\colon K\to [m]\setminus (E_1\cup E_2)$ is an $\ii$-admissible bounded bijection.
\end{enumerate}
\end{thm}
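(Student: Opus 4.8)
The plan is to obtain the stated formula by combining the two expansions already available and then exhibiting a cancellation. Substituting the formula of Theorem~\ref{thm:bbijections1} for $p_{E_1,E_2}^{\ii}$ into~\eqref{eqn:BS_coeff} gives immediately
\[
p_{u,v}^w=\sum_{(E_1,E_2)}\ \sum_{(K,\phi)}\Big(\prod_{k\in K}\al^\vee_{i_{\phi(k)}}(\al_{k}^{\ii,\phi})\Big)\Big(\prod_{\ell\in(E_1\cap E_2)\setminus K}\al_{\ell}^{\ii,\phi}\Big),
\]
where $(E_1,E_2)$ runs over all pairs with $\ii_{E_1},\ii_{E_2}$ reduced words for $u,v$ and, for each such pair, $(K,\phi)$ runs over \emph{all} bounded bijections with $K\subseteq E_1\cap E_2$ and $|K|+|E_1\cup E_2|=r$. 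Hence the theorem is equivalent to the assertion that the part of this double sum indexed by tuples $(E_1,E_2,K,\phi)$ for which $\phi$ fails to be $\ii$-admissible adds up to zero.

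To prove that vanishing I would construct a sign-reversing involution on the set $\mathcal{T}$ of these ``bad'' tuples. Given $T=(E_1,E_2,K,\phi)\in\mathcal{T}$, select the smallest $k\in K$ with $\ii_{L(k)}$ inadmissible, and inside $L(k)$ the first pair $a<b$ of consecutive elements (in the order of $L(k)$) with $i_a=i_b$; note that every index strictly between $a$ and $b$ avoids $E_1\cup E_2$, hence lies in $\phi(K)$, and is in fact $\phi(k')$ for some $k'\in K$ with $k'>k$. The involution should re-route $\phi$ around this first bad spot, together with the compensating adjustment of $E_1$, $E_2$ and $K$, so that exactly one root factor occurring in the product is replaced by $s_{i_a}$ of itself — hence by its negative, using $s_{i_a}(\al_{i_a})=-\al_{i_a}$ — while all the other root factors and all the coefficients $\al^\vee_{i_{\phi(k)}}(\al_{k}^{\ii,\phi})$ are unchanged; here the coincidence $i_a=i_b$ is exactly what allows the affected subwords of $\ii$ to remain reduced expressions for $u$ and $v$. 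Since the modification leaves the first bad spot intact it is an involution on $\mathcal{T}$, the contributions of $T$ and its partner cancel, and summing over the orbits shows that the bad part is $0$, which is what we want.

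The principal obstacle is making this involution precise and verifying that it does what is claimed. One must check that the local re-routing (i) respects the constraints $K\subseteq E_1\cap E_2$, $|K|+|E_1\cup E_2|=r$, the boundedness of $\phi$, and — crucially — the requirement that $\ii_{E_1}$ and $\ii_{E_2}$ stay reduced words for $u$ and $v$, where the nil and braid relations of Lemma~\ref{lem:Nil-Coxeter} together with $i_a=i_b$ are what is used; (ii) changes only a single factor among the $\al_{\ell}^{\ii,\phi}$ and none of the coefficients, which requires tracking how each truncated product $w_j$ defining $\al_j^{\ii,\phi}$ responds to the modified data, in particular that it is insensitive to the change made away from the bad spot; and (iii) is genuinely an involution, for which one must argue that ``the first bad spot'' is preserved even though $E_1$, $E_2$ and $\phi$ all move. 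Since the cancellation is of the same flavour as the one carried out in \cite{BR15} to pass from~\eqref{Eqn:BS_coeff_operator} to Theorem~\ref{thm:bbijections1}, an alternative route is to revisit that argument and isolate which contributions to an individual $p_{E_1,E_2}^{\ii}$ are non-admissible, showing that they annihilate each other only after the outer sum over $(E_1,E_2)$ is performed.
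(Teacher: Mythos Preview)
Your approach is exactly the one the paper indicates: the paper does not prove Theorem~\ref{thm:bbijections2} in full but states that ``terms corresponding to bounded bijections that are not $\ii$-admissible can be `canceled' when grouping terms across various pairs $(E_1,E_2)$ corresponding to reduced words of $u$ and $v$,'' deferring the details to \cite{BR15}. Your reduction to a cancellation of non-admissible contributions, to be realized by a sign-reversing involution that moves data between different pairs $(E_1,E_2)$, is precisely this idea.

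That said, what you have written is a plan, not a proof, and you say so yourself. The involution is never actually defined: ``re-route $\phi$ around this first bad spot, together with the compensating adjustment of $E_1$, $E_2$ and $K$'' is not a specification, and the three verification items you list at the end are the entire content of the argument. In particular, the mechanism by which the modified $(E_1,E_2)$ remain \emph{reduced} words for $u$ and $v$ (not merely words) is delicate and is where the equality $i_a=i_b$ is used in \cite{BR15}; invoking ``the nil and braid relations of Lemma~\ref{lem:Nil-Coxeter}'' does not by itself explain why reducedness survives the swap. Since the paper itself provides no more detail than you do, your proposal is on par with it, but if you intend this to stand as a proof you must either write down the involution explicitly and carry out checks (i)--(iii), or cite \cite{BR15} as the paper does.
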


A proof of Theorem~\ref{thm:bbijections2} can be found in \cite{BR15}.
The general idea is that terms corresponding to bounded bijections that are not $\ii$-admissible
can be ``canceled" when grouping terms across various pairs $(E_1,E_2)$ corresponding to reduced words of $u$ and $v$.

\begin{rem}
We remark that, even with the $\ii$-admissible reduction the sum given in Theorem~\ref{thm:bbijections2} is not manifestly positive.
In other words, it is possible that summands have negative coefficients.
In the case of crystallographic root datum, it is known for geometric reasons that
the coefficients $p_{u,v}^w$ are polynomials in the simple roots with non-negative coefficients.
\end{rem}

Observe that if $\ell(u)+\ell(v)=\ell(w)=r$, then condition~(1) in Theorem~\ref{thm:bbijections2} implies that
$K=E_1\cap E_2$ is the subset $K$ satisfying condition~(1).
Hence, for augmented coefficients $c_{u,v}^w$, we have the following simplification of Theorem \ref{thm:bbijections2}.

\begin{cor}\label{cor:bbijections3}
Let $w,u,v\in W$ and let $\ii=(i_1,\ldots,i_r)$ correspond to a reduced expression of $w$.
If $\ell(w)=\ell(u)+\ell(v)$, then the augmented coefficient
\[
c_{u,v}^w=\sum_{(E_1,E_2,\phi)}\Big(\prod_{k\in E_1\cap E_2} \al^\vee_{i_{\phi(k)}} (\al_{k}^{\ii, \phi})\Big)
\]
where the sum is over all triples $(E_1,E_2,\phi)$ such that
\begin{enumerate}
\item
$(E_1,E_2)\subseteq[r]^2$ such that $\ii_{E_1}, \ii_{E_2}$ are reduced expressions for $u,v$ respectively; and
\item
$\phi\colon E_1\cap E_2\to [r]\setminus (E_1\cup E_2)$ is an $\ii$-admissible bounded bijection.
\end{enumerate}
\end{cor}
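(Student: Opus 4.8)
The plan is to derive the corollary directly from Theorem~\ref{thm:bbijections2} by specializing to the case $\ell(w)=\ell(u)+\ell(v)=r$ and checking that the combinatorial data collapse as claimed.

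First I would recall that, by the definition of the augmented structure coefficients, $c_{u,v}^w=p_{u,v}^w$ precisely when $\ell(w)=\ell(u)+\ell(v)$, so it suffices to evaluate the right-hand side of Theorem~\ref{thm:bbijections2} under this hypothesis. Fix a pair $(E_1,E_2)$ occurring in the outer sum, so that $\ii_{E_1}$ and $\ii_{E_2}$ are reduced expressions for $u$ and $v$; being reduced forces $|E_1|=\ell(u)$ and $|E_2|=\ell(v)$, hence by inclusion--exclusion
\[
|E_1\cup E_2|+|E_1\cap E_2|=|E_1|+|E_2|=\ell(u)+\ell(v)=r.
\]

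Next I would use this identity to pin down the inner sum. Condition~(1) of Theorem~\ref{thm:bbijections2} requires $K\subseteq E_1\cap E_2$ with $|K|+|E_1\cup E_2|=r$; combined with the displayed identity this gives $|K|=|E_1\cap E_2|$, and together with $K\subseteq E_1\cap E_2$ we conclude $K=E_1\cap E_2$. Consequently $(E_1\cap E_2)\setminus K=\emptyset$, so the factor $\prod_{\ell\in(E_1\cap E_2)\setminus K}\al_{\ell}^{\ii,\phi}$ in the summand is an empty product equal to $1$, and the only surviving factor is $\prod_{k\in E_1\cap E_2}\al^\vee_{i_{\phi(k)}}(\al_{k}^{\ii,\phi})$, with $\phi$ ranging over the $\ii$-admissible bounded bijections $E_1\cap E_2\to[r]\setminus(E_1\cup E_2)$. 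Merging the outer sum over $(E_1,E_2)$ with this inner sum over $\phi$ into a single sum over triples $(E_1,E_2,\phi)$ satisfying conditions~(1) and~(2) of the corollary then yields the stated formula.

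There is no serious obstacle here: the argument is a bookkeeping specialization of Theorem~\ref{thm:bbijections2}, and the one point that needs a moment's care is the claim that $K$ is forced to equal $E_1\cap E_2$. This rests on the reducedness of $\ii_{E_1}$ and $\ii_{E_2}$ and is precisely where the hypothesis $\ell(w)=\ell(u)+\ell(v)$ enters; everything else ($\ii$-admissibility, the definition of $\al_{k}^{\ii,\phi}$, the form of the remaining product) is inherited verbatim from Theorem~\ref{thm:bbijections2}.
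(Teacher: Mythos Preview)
Your proof is correct and follows exactly the same route as the paper: both derive the corollary from Theorem~\ref{thm:bbijections2} by observing that the hypothesis $\ell(u)+\ell(v)=\ell(w)=r$ forces $K=E_1\cap E_2$ (via the cardinality constraint in condition~(1)), so the second product becomes empty and the double sum collapses to a sum over triples. Your write-up simply makes explicit the inclusion--exclusion step that the paper leaves implicit.
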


\begin{ex}
Consider the root system of type $\tG_2$  which has Cartan matrix
\[
C=\left[\begin{matrix} 2 & -3 \\ -1 & 2 \end{matrix}\right].
\]
Let $w=s_1s_2s_1s_2$ and $u=v=s_1s_2$.
In this case, $\ii=(1,2,1,2)$ and $m=4$.
There are nine pairs $(E_1,E_2)\subseteq [4]^2$ that correspond to reduced words of $u,v$
which we highlight with over/underlines in the following diagram.
The boxed entries are the pairs $(E_1,E_2)$ that have bounded bijections.
\[
\framebox{$1\ 2\ \underline{\overline{3}}\ \underline{\overline{4}}$}
\quad\framebox{$\underline{1}\ 2\ \overline{3}\ \underline{\overline{4}}$} \quad \framebox{$\underline{1}\ \underline{2}\
\overline{3}\ \overline{4}$} \quad \framebox{$\overline{1}\ 2\ \underline{3}\ \underline{\overline{4}}$}\quad
\overline{\underline{1}}\ 2\ 3\ \underline{\overline{4}}
\]
\[
\overline{\underline{1}}\ \underline{2}\ 3\ \overline{4} \quad \framebox{$\overline{1}\ \overline{2}\ \underline{3}\ \underline{4}$}\quad \overline{\underline{1}}\
\overline{2}\ 3\ \underline{4} \quad \overline{\underline{1}}\ \overline{\underline{2}}\ 3\ 4.
\]

For $E_1=E_2=\{3,4\}$, (i.e. $1\ 2\ \underline{\overline{3}}\ \underline{\overline{4}})$, we have
\[
E_1\cap E_2=\{3,4\}\quad \text{and}\quad [4]\setminus(E_1\cup E_2)=\{1,2\}.
\]
There are two bounded bijections given by
\[
\phi_1\colon (3,4)\to(1,2)\qquad\phi_2:(3,4)\to(2,1).
\]
Theorem \ref{thm:bbijections1} implies that
\begin{align*}
p^{\ii}_{E_1,E_2}&=\al_{i_1}^{\vee}(\al_{i_3})\cdot \al_{i_2}^\vee(s_{i_3}(\al_{i_4}))+\al_{i_2}^\vee(\al_{i_3})\cdot\al_{i_1}^\vee(s_{i_2}s_{i_3}(\al_{i_4}))\\
&=\al_{1}^{\vee}(\al_{1})\cdot \al_{2}^\vee(s_{1}(\al_{2}))+\al_{2}^\vee(\al_{1})\cdot\al_{1}^\vee(s_{2}s_{1}(\al_{2}))\\ &=(-2)(1)+(3)(0)=-2.
\end{align*}
For $(E_1,E_2)=(\{1,4\},\{3,4\})$ or $(\{3,4\},\{1,4\})$, we have the single bounded bijection
\[
\phi_3\colon \{4\}\to \{2\}
\]
and
\[
p^{\ii}_{E_1,E_2}=\al_{i_2}^\vee(s_{i_3}(\al_{i_4}))=\al_{2}^\vee(s_{1}(\al_{2}))=1.
\]
For $(E_1,E_2)=(\{1,2\},\{3,4\})$ or $(\{3,4\},\{1,2\})$, the sets
\[
E_1\cap E_2=\emptyset\quad\text{and}\quad E_1\cup E_2=[4]
\]
and hence $p^{\ii}_{E_1,E_2}=1$.
Summing these expressions as in the formual~\eqref{eqn:BS_coeff} gives
\[
c^w_{u,v}=p^w_{u,v}=-2+0+(1+1)+(1+1)=2.
\]
Note that $\phi_1$ and the two copies of $\phi_3$ are not $\ii$-admissible.
Theorem~\ref{thm:bbijections2} implies that the corresponding terms cancel ($-2+(1+1)=0$) leaving the final sum for $p^w_{u,v}$ unchanged.

\smallskip

If we repeat this calculation for the normalized root system of type $\tI_2(5)$ with Cartan matrix
\[
C=\left[\begin{matrix} 2 & -\tau_5 \\ -\tau_5 & 2 \end{matrix}\right],
\]
then
\[
c^w_{u,v}=p^w_{u,v}=(-2)(\tau_5-1)+(-1)+(\tau_5-1)+(\tau_5-1)+1+1=1.
\]
Note that this calculation uses the relation $\tau_5^2-\tau_5-1=0$.
In this case, the cancelation of terms that are not $\ii$-admissible is
\[
(-2)(\tau_5-1)+(\tau_5-1)+(\tau_5-1)=0.
\]
Also note that the remaining $\ii$-admissible summands are not all positive.
\end{ex}


\subsection{\it Augmented coproduct formula in rank two}
Now we provide an application of Corollary~\ref{cor:bbijections3} to finite root systems of rank 2.
In this case, the Weyl group $W$ is of type $\tI_2(m)$ for some positive integer $m\geq 2$
and the respective Cartan matrix is given by
\begin{equation}\label{eqn:rank2_cartan_matrix}
C=\left[\begin{array}{cc}2& -a\\-b&2 \end{array}\right]
\end{equation}
where $a,b$ are any positive real numbers such that $\sqrt{ab}=2\cos(\pi/m)$.
If $m=2$, we set both $a=b=0$.
For any $r\leq m$, define
\[
u_r:=\underbrace{\cdots s_1s_2s_1}_{r}\quad \text{and}\quad v_r:=\underbrace{\cdots s_2s_1s_2}_{r}.
\]
If $1\leq r<m$, then $u_r,v_r$ are the two unique elements in $W$ of length $r$.
Otherwise $u_0=v_0=1$ and $u_m=v_m$ is the longest element in $W$.
From Corollary~\ref{cor:bbijections3} (also Theorem~\ref{thm:coproduct_recursion}),
it is not difficult to conclude that the augmented coefficients satisfy
\[
c^{u_{r+t}}_{u_r,u_t}=c^{v_{r+t+1}}_{v_{r+1},u_{t}}=c^{v_{r+t+1}}_{u_{r},v_{t+1}},\qquad
c^{v_{r+t}}_{v_r,v_{t}}=c^{u_{r+t+1}}_{u_{r+1},v_{t}}=c^{u_{r+t+1}}_{v_{r},u_{t+1}},
\]
and
\[
c^{v_{r+t}}_{u_r,u_{t}}=c^{u_{r+t}}_{v_{r},v_{t}}=0.
\]
Hence it suffices to analyze the coefficients $c^{u_{r+t}}_{u_r,u_{t}}$ and $c^{v_{r+t}}_{v_r,v_{t}}$.
Define the sequences $A_k$ and $B_k$ by
\[
A_k:=a B_{k-1}-A_{k-2}\quad \text{and}\quad  B_k:=b A_{k-1}-B_{k-2}
\]
where $A_0=B_0=0$ and $A_1=B_1=1$.
For $k\leq m$, define the binomial coefficients
\[
C(r,t):=\frac{A_{r+t}A_{r+t-1}\cdots A_1}{(A_rA_{r-1}\cdots A_1)(A_{t}A_{t-1}\cdots A_1)},
\]
\[
D(r,t):=\frac{B_{r+t}B_{r+t-1}\cdots B_1}{(B_rB_{r-1}\cdots B_1)(B_{t}B_{t-1}\cdots B_1)}.
\]
We remark that the recursive structure of these sequences is related of Chebyshev polynomials of second kind.
The following theorem is proved by Kitchloo in \cite[\S10]{Ki14} for crystallographic root systems,
by Berenstein and Kapovich in \cite{BK11} for normalized root systems and
the first author and Berenstein for real root systems in \cite{BR15}.
\begin{thm}\label{thm:rank2_binom}
Let $\cl$ be a root system of rank 2 with the Weyl group $W$ of type $\tI_2(m)$ and with the Cartan matrix~\eqref{eqn:rank2_cartan_matrix}.
For any $1\leq r,t\leq m$, we have
\[
c^{u_{r+t}}_{u_r,u_t}=C(r,t)\quad \text{and}\quad c^{v_{r+t}}_{v_r,v_t}=D(r,t).
\]
\end{thm}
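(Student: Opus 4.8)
The plan is to reduce the statement to a recursion in $n=r+t$ and to match that recursion with the one defining the sequences $A_k$ and $B_k$. By the symmetry relations for augmented coefficients recorded just above the theorem (which follow from Corollary~\ref{cor:bbijections3}, or from cocommutativity of $\DI$ together with the length-additive factorizations $v_{n+1}=u_ns_2$ and $u_{n+1}=v_ns_1$), every augmented structure coefficient in rank two is determined by the two diagonal families $c^{u_{r+t}}_{u_r,u_t}$ and $c^{v_{r+t}}_{v_r,v_t}$. Hence it suffices to prove $c^{u_{r+t}}_{u_r,u_t}=C(r,t)$ and $c^{v_{r+t}}_{v_r,v_t}=D(r,t)$.

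Next I would extract a recursion for these two families from Theorem~\ref{thm:coproduct_recursion}. Since that recursion mixes degrees, I would carry along the \emph{entire} coproduct rather than only its degree-zero part: by induction on $n$ I would establish a closed form for $\DI(Y_{u_n})$ and $\DI(Y_{v_n})$ in the basis $\{Y_a\otimes Y_b\}_{a,b\in W}$ in which each coefficient is an explicit polynomial in $\al_1,\al_2$ whose coefficients are monomials in the $A_k$ and $B_k$. The inductive step uses only $\DI(Y_i)=\al_iY_i\otimes Y_i+Y_i\otimes 1+1\otimes Y_i$, the identities $s_1(\al_2)=\al_2+b\al_1$, $s_2(\al_1)=\al_1+a\al_2$, $Y_1\circ\al_2=b$, $Y_2\circ\al_1=a$ read off from~\eqref{eqn:rank2_cartan_matrix} and Lemma~\ref{lem:bullet_properties}, and the degree drop $\deg(Y_i\circ p)=\deg p-1$; pushing $\al_1$ and $\al_2$ repeatedly through alternating products of $Y_1,Y_2$ --- the minus coming from $s_i(\al_i)=-\al_i$ --- produces exactly the interleaved patterns $A_k=aB_{k-1}-A_{k-2}$ and $B_k=bA_{k-1}-B_{k-2}$. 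Reading off the degree-zero coefficient of $Y_{u_r}\otimes Y_{u_t}$ then gives a recursion for $c^{u_{r+t}}_{u_r,u_t}$ purely in terms of the $A_k$, and symmetrically for the $v$-family and the $B_k$. Equivalently, one may apply Corollary~\ref{cor:bbijections3} to the alternating reduced word of $u_n$, where the $\ii$-admissibility condition simplifies and each pairing $\al^\vee_{i_{\phi(k)}}(\al^{\ii,\phi}_k)$ is evaluated through the Cartan entries $2,-a,-b$; grouping the resulting sum over $(E_1,E_2,\phi)$ gives the same recursion.

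To conclude I would verify that $C(r,t)$ and $D(r,t)$ solve this recursion with the correct initial data. The relevant facts are $C(r,0)=C(0,t)=1$ (matching the normalization $c^{u_r}_{u_r,e}=c^{u_t}_{e,u_t}=1$), $C(1,1)=A_2=a$, and the ratios $C(r,t)=\tfrac{A_{r+t}}{A_r}\,C(r-1,t)=\tfrac{A_{r+t}}{A_t}\,C(r,t-1)$; together with the three-term identity $A_{k+1}=(ab-2)A_{k-1}-A_{k-3}$ (deduced from the defining recursion) they turn the recursion of the previous paragraph into one satisfied verbatim by $C(r,t)$, and likewise by $D(r,t)$ with the $B_k$. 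Uniqueness of the solution with the prescribed base cases then gives $c^{u_{r+t}}_{u_r,u_t}=C(r,t)$ and $c^{v_{r+t}}_{v_r,v_t}=D(r,t)$; in particular one also obtains the integrality $C(r,t),D(r,t)\in\OF$. The degenerate top case $r+t=m$, where $A_m=B_m=0$ and both sides vanish, is settled in one line.

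The hard part will be the bookkeeping in the middle step: because the coproduct recursion couples coefficients of different degrees and also couples the $u$- and $v$-families, one must guess the precise closed form for the whole of $\DI(Y_{u_n})$ and $\DI(Y_{v_n})$ and then check that $\odot$-multiplication by $\DI(Y_1)$ and $\DI(Y_2)$ preserves its shape. It is exactly this stability under multiplication that forces the interleaved recursions $A_k=aB_{k-1}-A_{k-2}$ and $B_k=bA_{k-1}-B_{k-2}$ to govern the coefficients, and hence makes the sequences $A_k,B_k$ --- and the binomial coefficients $C(r,t),D(r,t)$ built from them --- the right objects.
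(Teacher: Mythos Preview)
Your plan is a legitimate route but it diverges from the paper's argument in a way that makes your version substantially harder and, as you yourself note, incomplete at the crucial step. The paper does not attempt to write down a closed form for all of $\DI(Y_{u_n})$; instead it works directly with Corollary~\ref{cor:bbijections3} applied to the alternating word $\ii=(\ldots,2,1,2,1)$ of length $m'=r+t$ and uses one key combinatorial observation you did not isolate: in this rank-two setting each pair $(E_1,E_2)$ carries \emph{at most one} $\ii$-admissible bounded bijection. This collapses the sum in Corollary~\ref{cor:bbijections3} and yields the clean three-term recursion
\[
c^{u_{m'}}_{u_r,u_t}=\al_{i_1}^\vee(v_{m'-2}(\al_1))\cdot c^{v_{m'-2}}_{v_{r-1},v_{t-1}}+c^{u_{m'-2}}_{u_{r-2},u_t}+c^{u_{m'-2}}_{u_r,u_{t-2}},
\]
which drops the length by two and couples the $u$- and $v$-families in a single explicit way. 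By induction this becomes $\al_{i_1}^\vee(v_{m'-2}(\al_1))\,D(r-1,t-1)+C(r-2,t)+C(r,t-2)$, and the leading pairing is identified as $A_{m'}-A_{m'-2}$ or $B_{m'}-B_{m'-2}$ depending on parity. The final step is then to check that $C(r,t)$ satisfies this same recursion.

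By contrast, your primary strategy---guess a closed form for the full coproduct and verify it is stable under $\odot$-multiplication by $\DI(Y_i)$---would require inventing and carrying a formula for every $p^{u_n}_{u,v}$ of every degree, not just the augmented ones; you concede this ``bookkeeping'' is the hard part and do not actually carry it out. Your identities about $C(r,t)$ (the ratios $C(r,t)=\tfrac{A_{r+t}}{A_r}C(r-1,t)$ and the three-term relation $A_{k+1}=(ab-2)A_{k-1}-A_{k-3}$) are correct and would be useful, but the recursion you would need to match them against is never written down. Your alternative via Corollary~\ref{cor:bbijections3} is essentially the paper's approach, but without the uniqueness-of-admissible-bijection observation it does not simplify to anything manageable. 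If you want to complete the proof along the paper's lines, prove that uniqueness statement first; everything else then reduces to the recursion displayed above.
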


\begin{proof}
We give an outline of the proof given in \cite{BR15} for the coefficients $c^{u_{r+t}}_{u_r,u_t}$
(the proof for $c^{v_{r+t}}_{v_r,v_t}$ is similar).
Let $\ii=(\ldots,2,1,2,1)$ denote the repeating sequence of length $m':=r+t$.
Let $(E_1,E_1)\subseteq [m']^2$ be a pair of subsets such that $\ii_{E_1},\ii_{E_2}$ are reduced expressions of $u_r, u_t$ respectively.
It can be shown that each such pair such admits at most one $\ii$-admissible bounded bijection
$\phi\colon E_1\cap E_2\to [m']\setminus(E_1\cup E_2)$.
Applying this fact to the formula in Corollary \ref{cor:bbijections3}, we get that
\[
c^{u_{m'}}_{u_r,u_t}=\al_{i_1}^\vee(v_{m'-2}(\al_1)) \cdot c_{v_{r-1},v_{t-1}}^{v_{m'-2}}+c_{u_{r-2},u_{t}}^{u_{m'-2}}+c_{u_{r},u_{t-2}}^{u_{m'-2}}.
\]
Inducting on $m'$ gives the formula
\[
c^{u_{m'}}_{u_r,u_t}=\al_{i_1}^\vee(v_{m'-2}(\al_1)) \cdot D(r-1,t-1)+C(r-2,t)+C(t,r-2).
\]
It can also be shown that
\[
\al_{i_1}^\vee(v_{m'-2}(\al_1))=\begin{cases} A_{m'}-A_{m'-2} &\text{if $m'$ is odd}\\ B_{m'}-B_{m'-2} &\text{if $m'$ is even.}\end{cases}
\]
A technical calculation (details can be found in \cite{BR15}) shows that the binomial $C(r,t)$ satisfies the same recursion and
hence  $c^{u_{m'}}_{u_r,u_t}=C(r,t)$.
\end{proof}

\begin{ex}
We consider two examples of root systems with with the Weyl group of type $\tI_2(m)$.
The first is the normalized root system  with
\[
a=b=\tau_m=2\cos(\pi/m)=\zeta+\zeta^{-1},
\]
where $\zeta$ denotes a $m$-th primitive root of unity.
A simple calculation shows that
\[
A_k=B_k=[k]_\zeta:=\zeta^{k-1}+\zeta^{k-3}+\cdots + \zeta^{3-k}+\zeta^{1-k}=\tfrac{\zeta^k-\zeta^{-k}}{\zeta-\zeta^{-1}},
\]
and
\[
c^{u_{r+t}}_{u_r,u_t}=c^{v_{r+t}}_{v_r,v_t}={\tbinom{r+t}{r}}_{\zeta}:=\frac{[r+t]_{\zeta}!}{[r]_{\zeta}!\cdot [t]_{\zeta}!},
\]
where $[r]_{\zeta}!:=[r]_{\zeta}[r-1]_{\zeta}\cdots [1]_{\zeta}$.
Note that $[m]_{\zeta}=0$ and hence $c^{u_{r+t}}_{u_r,u_t}=0$ if $r+t\geq m$.
The minimal polynomial of $\tau_m$ can be used to given an alternate expression of the sequence $\{A_k\}_{k\geq 0}$.
For example, if $m=7$, then $x^3-x^2-2x+1$ is the minimal polynomial of $\tau_7$ and the sequence $\{A_k\}_{k\geq 0}$ is
\[
(0,1,\tau_7, \tau_7^2-1, \tau_7^2-1, \tau_7, 1,0).
\]
If we treat $\zeta$ as an indeterminant, then $\displaystyle{\tbinom{r+t}{r}}_{\zeta}$ is the Gaussian binomial
used in the study of quantum groups and
the Gaussian integers $[k]_\zeta$ can be viewed as the characters of irreducible $\SL_2(\cc)$-representations.
Also note that if we set $\zeta=1$, then we recover classical binomial coefficients.
When $\zeta=1$, the corresponding Cartan matrix is for the infinite root system of the affine Lie group $\widehat{\SL}_2(\cc)$.

\smallskip

Next, we consider the polarized root system with Weyl group $\tI_2(m)$ where $m$ is even or $m=3$.
For the polarized root system, we set
\[
a=\tau_m^2=(\zeta+\zeta^{-1})^2\quad\text{and}\quad b=1.
\]
Here we have
\[
A_k=\begin{cases} [k]_{\zeta} &\text{if $k$ is odd} \\ [2]_\zeta\cdot [k]_{\zeta} & \text{if $k$ is even}\end{cases}\qquad\text{and}\qquad
B_k=\begin{cases} [k]_{\zeta} &\text{if $k$ is odd} \\ [2]_{\zeta}^{-1}\cdot [k]_{\zeta} & \text{if $k$ is even.}\end{cases}
\]
Theorem \ref{thm:rank2_binom} implies
\[
\displaystyle c^{u_{r+t}}_{u_r,u_t}=\begin{cases}\displaystyle [2]_\zeta\cdot{\tbinom{r+t}{r}}_{\zeta} &\text{if $r,t$ are both odd,} \\ \displaystyle {\tbinom{r+t}{r}}_{\zeta} & \text{otherwise.}\end{cases}
\]
and
\[
c^{v_{r+t}}_{v_r,v_t}=\begin{cases}\displaystyle [2]_{\zeta}^{-1}\cdot{\tbinom{r+t}{r}}_{\zeta} &\text{if $r,t$ are both odd,} \\
\displaystyle {\tbinom{r+t}{r}}_{\zeta} & \text{otherwise.}\end{cases}
\]
Note that polarized root systems include crystallogrpahic root systems of types $\tA_2$, $\tB_2$ and $\tG_2$.
These cases correspond to taking $m=3,4,$ and $6$ respectively
(we can also recover $\tA_1\times \tA_1$ if we allow $a=b=0$ with $m=2$).
For example, the crystallographic root system of type $\tG_2$ ($m=6$) has $a=[2]_{\zeta}^2=3$ and $b=1$.
In this case, the sequences $\{A_k\}_{k\geq 0}$ and $\{B_k\}_{k\geq 0}$ are respectively given by
\[
(0,1,3,2,3,1,0)\quad \text{and}\quad (0,1,1,2,1,1,0).
\]
Using these values, we compute augmented coefficients using Theorem \ref{thm:rank2_binom}.
For example:
\[
c^{u_4}_{u_1,u_3}=\tfrac{3\cdot 2\cdot 3\cdot 1}{(1)(2\cdot 3\cdot 1)}=3\quad \text{and}\quad c^{v_5}_{v_3,v_2}=\tfrac{1\cdot 1\cdot 2\cdot 1\cdot 1}{(2\cdot 1\cdot 1)(1\cdot 1)}=1.
\]
If we consider the normalized root system of type $\tG_2$, then $a=b=\sqrt{3}$ and the sequence $A_k=B_k$ is given by
\[
(0,1,\sqrt{3},2,\sqrt{3},1,0)
\]
and the coefficient
\[
c^{u_4}_{u_1,u_3}=\tfrac{\sqrt{3}\cdot 2\cdot \sqrt{3}\cdot 1}{(1)(2\cdot \sqrt{3}\cdot 1)}=\sqrt{3}.
\]
Note that the polarized (crystallographic) and normalized coefficients of $c^{u_4}_{u_1,u_3}$ are off by a scalar of $[2]_{\zeta}=\sqrt{3}$.
\end{ex}


\section{Dual twisted group algebras and nil-Hecke rings}\label{sec:twistedgr}
In this section, we study duals of the twisted group algebra, nil-Hecke ring and its connections to Schubert calculus.

\subsection{\it The group basis and the Schubert basis}
The localized ring $Q$ is a commutative unital ring and $Q_W$ is a free left $Q$-module of finite rank.
Furthermore, $Q_W$ is a $Q$-co-algebra,
with linear co-product ${\DI}\colon Q_W \to Q_W\otimes_Q Q_W$ and counit $\eps\colon Q_W\to Q$.
Then the $Q$-linear dual
\[
Q^*_W:=\Hom_Q(Q_W,Q)
\]
has a natural structure of $Q$-algebra with unit, given by dualizing the co-product and using the natural isomorphism
$Q_W^*\otimes_Q Q_W^*\simeq (Q_W\otimes_Q Q_W)^*$, and dualizing the co-unit,
(i.e, $\eps\colon Q_W\to Q$ seen as an element of $Q_W^*$ is the unit).
The algebra $Q_W^*$ is commutative if and only if $Q_W$ is co-commutative.
As a $Q$-vector space, addition and scaling on $Q_W^*$ is the usual point-wise operations
\[
(f+g)(\de_w)=f(\de_w)+g(\de_w),\quad\text{and}\quad (qf)(\de_w)=q(f(\de_w))
\]
for $f,g\in Q_W^*$, $w\in W$ and $q\in Q$.  The multiplication induced by $\DI$ is given by
\[
(f\cdot g)(\de_w)=f(\de_w)g(\de_w).
\]
Let $\{\psi_w\}_{w\in W}$ and  $\{\xi_w\}_{w\in W}$ denote the Kronecker dual bases to
$\{\de_w\}_{w\in W}$ and $\{Y_w\}_{w\in W}$ respectively.  In other words:
\[
\psi_w(\de_v)=\xi_w(Y_v):=\de_{w,v}.
\]
We call  $\{\psi_w\}_{w\in W}$ the \emph{group basis} and $\{\xi_w\}_{w\in W}$ the \emph{Schubert basis} of $Q_W^*$.
Using the group basis, the product structure can be interpreted as the component-wise multiplication
\begin{equation}\label{eqn:group_basis_product}
\Big(\sum_{w\in W} q_w\psi_w\Big)\cdot \Big(\sum_{w\in W} q_w' \psi_w\Big)=\sum_{w\in W}  q_wq_{w}'\psi_w.
\end{equation}
In terms of these bases, the unit in $Q_W^*$ is given by
\[
\xi_e=\sum_{w\in W}\psi_w.
\]


\subsection{\it Transformation matrices and coefficients}
We define the coefficients $d_{w,v}$ by writing $\xi_w$ in terms of the group basis
\[
\xi_w=\sum_{v\in W}d_{w,v} \psi_v.
\]
By definition we have
\[
\xi_w(\de_v)=d_{w,v}.
\]
We also define the analogous coefficients $c_{w,v}$ by expanding
\begin{equation}\label{eqn:c_wv_coeficients}
Y_w=\sum_{v\in W} c_{w,v}\de_v
\end{equation}
in the twisted group algebra $Q_W$.
By definition we have that
\[
\de_{w,v}=\xi_w(Y_v)=\sum_{u\in W}c_{v,u}\xi_w(\de_u)=\sum_{u,u'\in W}c_{v,u}d_{w,u'}.
\]
If we define matrices $C:=[c_{w,v}]_{w,v\in W}$ and $D:=[d_{w,v}]_{w,v\in W}$, then the equation above translates to
\[
D^\mathrm{t}=C^{-1}.
\]
The following proposition appears in \cite{KK86} and \cite{Ku96}.

\begin{prop}\label{prop:transistion_coefficients}
Let $w$, $v \in W$, $w=s_{i_1}\ldots s_{i_r}$ be a reduced word, and let $\ii:=(i_1,\ldots,i_r)$.
For any $k\leq r$ and subsequence $\jj=(j_1,\ldots, j_s)\subseteq \ii$, let $(j_1,\ldots, j_t)$ denote the prefix of $\jj$ that appears strictly before the entry $i_k$ in $\ii$ and define
\[
\be_{k,\jj}:=s_{j_1}\cdots s_{j_{t}}(\al_{i_k}).
\]
Then
\begin{equation}\label{eqn:cuv_formula}
c_{w,v}=(-1)^{\ell(w)-\ell(v)}\sum_{\jj\subseteq \ii }\Big( \prod_{k\in \ii} \be_{k,\jj}^{-1}\Big),
\end{equation}
where the sum is taken over all subsequences $\jj=(j_1,\ldots, j_s) \subseteq \ii$ for which
$v=s_{j_1}\ldots s_{j_s}$ (not necessarily reduced).

In particular, the following properties hold:
\begin{enumerate}
\item If $v\nleq w$, then $c_{w,v}=d_{v,w}=0$,
\item $\displaystyle c_{w,w}^{-1}=d_{w,w}=\be_{1,\ii}\cdots \be_{r,\ii}=\prod_{\al\in \RS^+\cap w\RS^-} \al$, and
\item $d_{e,v}=1$.
\end{enumerate}
\end{prop}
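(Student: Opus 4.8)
The plan is to expand $Y_w$ directly in the $\de$-basis of $Q_W$ by multiplying out its definition. Fix the reduced word $w=s_{i_1}\cdots s_{i_r}$; since $Y_w$ does not depend on the chosen reduced expression, we have
\[
Y_w=Y_{i_1}\cdots Y_{i_r}=\tfrac{1}{\al_{i_1}}(\de_{s_{i_1}}-\de_e)\cdots \tfrac{1}{\al_{i_r}}(\de_{s_{i_r}}-\de_e).
\]
Distributing over the two choices in each factor produces one summand for every subset $J=\{l_1<\cdots<l_s\}\subseteq[r]$: in position $k$ one keeps $\de_{s_{i_k}}$ when $k\in J$ and $-\de_e$ otherwise, so the group part of the $J$-summand is the (not necessarily reduced) product $v_J:=s_{i_{l_1}}\cdots s_{i_{l_s}}$, carrying a sign $(-1)^{r-s}$.

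The core step is to normalize each summand to the form $(\text{scalar})\cdot\de_{v_J}$ by sweeping every factor $\tfrac{1}{\al_{i_k}}$ to the left past the $\de$'s standing before it, using the twisted relation $\de_u\,p=u(p)\,\de_u$ for $p\in Q$ and $u\in W$. Moving $\tfrac{1}{\al_{i_k}}$ all the way to the left applies to it exactly the reflections $s_{i_l}$ with $l\in J$ and $l<k$, in increasing order of $l$; hence the $J$-summand equals $(-1)^{r-s}\bigl(\prod_{k=1}^{r}\be_{k,\jj}^{-1}\bigr)\de_{v_J}$, where $\jj$ is the subsequence of $\ii$ indexed by $J$ and $\be_{k,\jj}=s_{i_{l_1}}\cdots s_{i_{l_t}}(\al_{i_k})$ with $l_t<k\le l_{t+1}$, i.e.\ precisely the root in the statement (and $\be_{k,\jj}$ is a root, hence regular in $Q$, so its inverse makes sense). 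Collecting the summands with a common value $v_J=v$ and using that every word spelling $v$ has length $\equiv\ell(v)\pmod 2$, so that $(-1)^{r-s}=(-1)^{\ell(w)-\ell(v)}$ is the same for all of them, then gives~\eqref{eqn:cuv_formula}. I expect this bookkeeping to be the only genuine obstacle: keeping the order of the accumulated reflections straight, and checking that although several subsets $J$ may contribute to the same $\de_v$ they all carry the uniform sign.

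The three additional claims follow quickly. For (1): if a subword $\jj$ of $\ii$ spells $v$, then repeated application of the Deletion Condition produces a \emph{reduced} subword of $\ii$ still spelling $v$, whence $v\le w$ by the subword characterization of the Bruhat order; contrapositively the sum in~\eqref{eqn:cuv_formula} is empty, so $c_{w,v}=0$ whenever $v\nleq w$. Hence $C=[c_{w,v}]$ is triangular for the Bruhat order with diagonal entries $c_{w,w}=\bigl(\prod_{k=1}^{r}\be_{k,\ii}\bigr)^{-1}$ invertible in $Q$ (only $J=[r]$ spells $w$, since $\ell(v_J)\le|J|\le r$), so $C^{-1}$ is triangular for the same order, and then $d_{v,w}=(C^{-1})_{w,v}=0$ for $v\nleq w$ since $D^{\mathrm t}=C^{-1}$. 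For (2): the same observation gives $c_{w,w}=\bigl(\prod_{k=1}^{r}\be_{k,\ii}\bigr)^{-1}$ with $\be_{k,\ii}=s_{i_1}\cdots s_{i_{k-1}}(\al_{i_k})$, and the classical identification of $\{\be_{k,\ii}\}_{k=1}^{r}$ with $\RS^+\cap w\RS^-$ yields the product formula, while $d_{w,w}=(C^{-1})_{w,w}=c_{w,w}^{-1}$ by triangularity. For (3): the counit $\eps\colon Q_W\to Q$ is a ring homomorphism with $\eps(\de_u)=1$ for every $u\in W$, so $\eps(Y_i)=\tfrac{1}{\al_i}(1-1)=0$ and therefore $\eps(Y_v)=\de_{e,v}$; this means $\eps=\xi_e$, and reading off $\eps(\de_v)=1$ gives $d_{e,v}=1$.
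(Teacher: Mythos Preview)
Your proof is correct and follows essentially the same approach as the paper: both expand $Y_w$ as the product of the $\tfrac{1}{\al_{i_k}}(\de_{s_{i_k}}-\de_e)$, distribute over the two choices in each factor, push the scalars past the $\de$'s using the twisted relation, and then deduce (1)--(2) from the resulting Bruhat-triangularity of $C$ together with $D^{\mathrm t}=C^{-1}$. Your treatment of (3) via the counit $\eps$ is a minor variant of the paper's (which quotes that $\xi_e=\sum_v\psi_v$ is the unit of $Q_W^*$), but the two arguments are equivalent.
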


\begin{proof}
Let $w=s_{i_1}\ldots s_{i_r}$ be a reduced expression.
Then expanding $Y_w$ gives
\begin{align*}
Y_w=Y_{i_1}\ldots Y_{i_r}=\tfrac{1}{\al_{i_1}}(\de_{s_{i_1}}-\de_e)\ldots \tfrac{1}{\al_{i_r}}(\de_{s_{i_r}}-\de_e).
\end{align*}
For any subset $\jj=(j_1,\ldots, j_s)\subseteq \ii$ the coefficient of $\de_{s_{j_1}}\ldots \de_{s_{j_s}}$ in the expansion of $Y_w$ is
\[
(-1)^{\ell(w)-s}\prod_{k\in \ii} \be_{k,\jj}^{-1}.
\]
If $v=s_{j_1}\ldots s_{j_s}$, then $\ell(v)=s\hspace{-5pt}\mod 2$ which proves the first part of the proposition.
For the additional properties, note that the matrix $C$ is lower triangular with respect to the Bruhat order.
Parts~(1) and (2) now follow from the fact that $D^\mathrm{t}=C^{-1}$ and
applying the formula given in Equation~\eqref{eqn:cuv_formula} to $c_{w,w}$.
To prove part~(3), note that
\[
\xi_e=\sum_{v\in W}\psi_v
\]
is the identity element in $Q_W^*$ and, therefore, $d_{e,v}=1$ for all $v\in W$.
\end{proof}

For an example of Equation \eqref{eqn:cuv_formula}, see Example \ref{ex:smooth}.  We will later prove an analogous formula for the coefficient $d_{w,v}$ similar to Equation \eqref{eqn:cuv_formula}.
The coefficients above can be used to express the products of group basis elements with Schubert basis elements.

\begin{lem}\label{lem:group-schubert_product}
Let $u,v\in W$.  Then
\[\psi_u\cdot \xi_v=\sum_{w\in W}(c_{w,u}\cdot d_{v,u})\, \xi_w.\]
\end{lem}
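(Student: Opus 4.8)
The plan is to compute the product $\psi_u\cdot\xi_v$ by first expanding $\xi_v$ in the group basis and then using the component-wise multiplication rule for the group basis, and finally re-expanding back into the Schubert basis. First I would write, using the definition of the coefficients $d_{v,u}$ and formula~\eqref{eqn:group_basis_product}, that
\[
\psi_u\cdot\xi_v=\psi_u\cdot\Big(\sum_{z\in W}d_{v,z}\,\psi_z\Big)=d_{v,u}\,\psi_u,
\]
since $\psi_u\cdot\psi_z=\delta_{u,z}\,\psi_u$ by the component-wise product~\eqref{eqn:group_basis_product}. So the whole computation collapses to expressing the single group-basis element $\psi_u$ in the Schubert basis.

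Next I would invert the transition between the two bases. We have $\xi_w=\sum_{v}d_{w,v}\,\psi_v$, i.e. the matrix $D=[d_{w,v}]$ records the change of basis from $\{\psi_v\}$ to $\{\xi_w\}$; equivalently $(\psi_v)_v = (D^{-1})^{\mathrm t}$ applied to $(\xi_w)_w$ in the appropriate sense. By Proposition~\ref{prop:transistion_coefficients} we know $D^{\mathrm t}=C^{-1}$, hence $D^{-1}=C^{\mathrm t}$, so the inverse change of basis is governed by $C=[c_{w,v}]$. Concretely, writing $\psi_u=\sum_{w}e_{u,w}\,\xi_w$ and pairing both sides against $Y_w$ (using $\xi_w(Y_v)=\delta_{w,v}$) gives $e_{u,w}=\psi_u(Y_w)=\sum_{v}c_{w,v}\,\psi_u(\delta_v)=c_{w,u}$, using~\eqref{eqn:c_wv_coeficients}. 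Thus $\psi_u=\sum_{w\in W}c_{w,u}\,\xi_w$. Substituting into the displayed collapse above yields
\[
\psi_u\cdot\xi_v=d_{v,u}\,\psi_u=\sum_{w\in W}\big(c_{w,u}\cdot d_{v,u}\big)\,\xi_w,
\]
which is exactly the claimed identity.

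There is essentially no hard step here; the only thing requiring a little care is bookkeeping with which index of $c$ and $d$ is being summed over and making sure the pairing $\psi_u(Y_w)=c_{w,u}$ is read off correctly from~\eqref{eqn:c_wv_coeficients} rather than its transpose. (One could alternatively avoid even expanding $\psi_u$ in the Schubert basis and instead dualize the coproduct directly: $(\psi_u\cdot\xi_v)(Y_w)=(\psi_u\otimes\xi_v)(\DI(Y_w))$, but this route reintroduces the coproduct coefficients $p^w_{u,v}$ and is strictly more work than the group-basis argument above.) So the main — and really only — obstacle is notational consistency, and I would simply present the two short computations in the order given.
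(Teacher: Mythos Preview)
Your proof is correct and uses essentially the same ingredients as the paper's proof: the paper also extracts the coefficient of $\xi_w$ by evaluating $(\psi_u\cdot\xi_v)(Y_w)$, expanding $Y_w=\sum_{w'}c_{w,w'}\de_{w'}$, and using $\psi_u(\de_{w'})=\de_{u,w'}$ and $\xi_v(\de_{w'})=d_{v,w'}$. Your version differs only in organization---you first collapse $\psi_u\cdot\xi_v$ to $d_{v,u}\psi_u$ via the component-wise product and then expand $\psi_u$ in the Schubert basis, whereas the paper does both steps in a single evaluation---but the underlying computation $\psi_u(Y_w)=c_{w,u}$ is identical.
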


\begin{proof}
Let
$\psi_u\cdot \xi_v=\sum_{w\in W}b_{u,v}^w\, \xi_w$
for some coefficients $b_{u,v}^w\in Q$ and note that $b_{u,v}^w=\psi_u\cdot \xi_v(Y_w)$.
If we write $\displaystyle Y_w=\sum_{w'\in W}c_{w,w'} \de_{w'}$, then we get
\[
b_{u,v}^w=\psi_u\cdot \xi_v\Big(\sum_{w'\in W}c_{w,w'} \de_{w'}\Big)=\sum_{w'\in W}c_{w,w'} \psi_u(\de_{w'})\cdot \xi_v(\de_{w'})=c_{w,u}\cdot d_{v,u}.\qedhere
\]
\end{proof}


\subsection{\it The Hecke action}
Next, we define an action of $Q_W$ on $Q_W^*$ called the Hecke action.
 This action plays an important role in understanding the structure of the dual $Q_W^*$.  Many results from this subsection also appear in \cite[Chapter 11]{Ku02}.

\begin{dfn}\label{Def:Hecke_action}
We define the  \emph{Hecke action} of $Q_W$ on $Q_W^*$ by
\[
q\de_v\bullet f(\de_w):=f(\de_w(q\de_v))=f(w(q)\de_{wv})=w(q)f(\de_{wv}).
\]
\end{dfn}

For the group basis $\{\psi_w\}_{w\in W}$ it can be shown that
\begin{equation}\label{eqn:Hecke_action_group_basis}
q\de_v\bullet \psi_w=wv^{-1}(q) \psi_{wv^{-1}}
\end{equation}
and
\begin{equation}\label{eqn:Hecke_action_group_basis2}
Y_i\bullet \psi_w=\frac{1}{w(\al_i)} (\psi_{ws_i}-\psi_w).
\end{equation}
In the next two lemmas we show how $Q_W$ acts on the Schubert basis of $Q_W^*$.
\begin{lem}\label{lem:bullet_Schuberts}
For any $v,w\in W$, we have
\[
Y_v\bullet \xi_w=\begin{cases} \xi_{wv^{-1}} & \text{if $\ell(w)=\ell(v)+\ell(wv^{-1})$}\\
0 & \text{otherwise}.
\end{cases}
\]
\end{lem}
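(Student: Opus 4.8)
The plan is to observe that, after passing to $Q$-linear duals, the Hecke action of $Q_W$ on $Q_W^*$ is adjoint to right multiplication in $Q_W$; the statement then falls out of the nil-Coxeter product rule together with the definition of the Schubert basis.

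First I would establish the adjunction identity $(z\bullet f)(a)=f(a\,z)$ for all $z,a\in Q_W$ and $f\in Q_W^*$. By additivity in $a$ and in $z$ it suffices to check it when $a=q\de_p$ and $z=q'\de_u$ with $q,q'\in Q$ and $p,u\in W$: the left side equals $q\cdot(q'\de_u\bullet f)(\de_p)=q\,p(q')\,f(\de_{pu})$ --- using that $q'\de_u\bullet f$ is $Q$-linear and Definition~\ref{Def:Hecke_action} --- while the right side is $f\big(q\,p(q')\,\de_{pu}\big)=q\,p(q')\,f(\de_{pu})$ by the twisted commutation relation in $Q_W$. I expect this to be the one genuinely delicate point, since $z\bullet f$ is \emph{not} $Q$-linear in the left coefficient of $z$ and so the identity has to be verified on the $\de$-basis rather than by a formal manipulation; everything after this is bookkeeping.

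Next, since $\{Y_u\}_{u\in W}$ is a $Q$-basis of $Q_W$ (as shown in the proof of Proposition~\ref{prop:nil_coxeter_in_Q_w}) with Kronecker dual basis $\{\xi_u\}_{u\in W}$, the element $Y_v\bullet\xi_w$ is pinned down by the scalars $(Y_v\bullet\xi_w)(Y_u)$, $u\in W$; by the adjunction these equal $\xi_w(Y_uY_v)$. Using $Y_x=(-1)^{\ell(x)}X_x$, Lemma~\ref{lem:Nil-Coxeter} gives $Y_uY_v=Y_{uv}$ when $\ell(uv)=\ell(u)+\ell(v)$ and $Y_uY_v=0$ otherwise, so $\xi_w(Y_uY_v)$ equals $\de_{w,uv}$ in the first case and $0$ in the second.

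Finally I would run the case analysis on $u$. A scalar $(Y_v\bullet\xi_w)(Y_u)$ is nonzero only if $uv=w$, i.e. $u=wv^{-1}$, and $\ell(w)=\ell(u)+\ell(v)=\ell(wv^{-1})+\ell(v)$. Hence, if $\ell(w)=\ell(v)+\ell(wv^{-1})$ then $u=wv^{-1}$ gives the value $1$ and all other $u$ give $0$, so $Y_v\bullet\xi_w=\xi_{wv^{-1}}$; and if $\ell(w)\neq\ell(v)+\ell(wv^{-1})$ then no $u$ gives a nonzero value, so $Y_v\bullet\xi_w=0$ --- which is the claimed dichotomy. (As an alternative one could expand $\xi_w=\sum_v d_{w,v}\psi_v$ and iterate formula~\eqref{eqn:Hecke_action_group_basis2}, but the $Y$-basis route above is cleaner.)
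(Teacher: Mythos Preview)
Your proof is correct and follows essentially the same approach as the paper: both hinge on the adjunction $(Y_v\bullet\xi_w)(Y_u)=\xi_w(Y_uY_v)$ together with the nil-Coxeter product rule. The only difference is organizational---the paper first treats the case $v=s_i$ (proving formula~\eqref{eqn:bullet_dual_action_Y}) and then iterates along a reduced word of $v$, whereas you state the adjunction for arbitrary $z\in Q_W$ and invoke Lemma~\ref{lem:Nil-Coxeter} once for the full product $Y_uY_v$; this is the same argument packaged slightly more directly.
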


\begin{proof}
Observe that the Hecke action of push-pull elements on the Schubert basis gives
\begin{equation*}
(Y_i\bullet \xi_w)(Y_u)=\xi_w(Y_uY_i)=\begin{cases} 1 & \text{if $w=us_i$ with $\ell(w)=\ell(u)+1$}\\
0 & \text{otherwise}
\end{cases}
\end{equation*}
and hence
\begin{equation}\label{eqn:bullet_dual_action_Y}
Y_i\bullet \xi_w=\begin{cases} \xi_{ws_i} & \text{if $s_i\in D_R(w)$}\\
0 & \text{otherwise}.
\end{cases}
\end{equation}
If $v=s_{i_1}\cdots s_{i_r}$ is a reduced word,
then applying formula~\eqref{eqn:bullet_dual_action_Y} to $Y_v:=Y_{i_1}\cdots Y_{i_r}$ proves the lemma.
\end{proof}

\begin{lem}\label{lem:bullet_group_basis}
Let $p\in\cl$ and $w\in W$.  Then
\begin{equation}\label{eqn:bullet_group_action1}
(p\de_e)\bullet\xi_w=w(p)\xi_w-\sum_{w\xra{\be}v} \be^{\vee}(p)\, \xi_v.
\end{equation}
Furthermore, for any $1\leq i\leq n$, we have
\[
(\de_{s_i})\bullet\xi_w=
\begin{cases}
 \xi_w- w(\al_i)\xi_{ws_i}-\sum_{ws_i\xra{\be}v} \be^{\vee}(\al_i)\, \xi_v &\text{if $s_i\in D_R(w)$},\\
\xi_w &\text{otherwise}.
\end{cases}
\]
\end{lem}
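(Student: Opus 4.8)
The plan is to prove both formulas by testing the resulting elements of $Q_W^*$ against the push-pull (Schubert) basis $\{Y_v\}_{v\in W}$, using the duality $\xi_w(Y_v)=\de_{w,v}$ together with the fact that the Hecke action is a linear left action satisfying $(z\bullet f)(x)=f(xz)$ for all $z,x\in Q_W$ and $f\in Q_W^*$ (this last identity extends the defining formula of Definition~\ref{Def:Hecke_action} from the basis $\{\de_w\}$ to all of $Q_W$ by left $Q$-linearity of $f$).

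First I would compute the coefficients $a_v$ in $(p\de_e)\bullet\xi_w=\sum_{v\in W}a_v\,\xi_v$ as $a_v=\big((p\de_e)\bullet\xi_w\big)(Y_v)=\xi_w(Y_v\cdot p)$. Expanding $Y_v\,p$ by the affine relation~\eqref{eqn:general_aff2}, namely $Y_v\,p=v(p)\,Y_v-\sum_{v'\xra{\be}v}\be^\vee(p)\,Y_{v'}$, and pairing with $\xi_w$ gives $a_v=v(p)\,\de_{w,v}-\sum_{v'\xra{\be}v}\be^\vee(p)\,\de_{w,v'}$. The first term survives only for $v=w$, contributing $w(p)$; in the second, $\de_{w,v'}$ survives only when $v'=w$, i.e. when $w\xra{\be}v$, and such a $\be$ is unique since $ws_\be=ws_{\be'}$ forces $\be=\be'$ on positive roots, so the contribution is $-\be^\vee(p)$. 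Reassembling yields $(p\de_e)\bullet\xi_w=w(p)\,\xi_w-\sum_{w\xra{\be}v}\be^\vee(p)\,\xi_v$, which is the first claim.

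For the second formula I would reduce to the first one. From $Y_i=\tfrac{1}{\al_i}(\de_{s_i}-\de_e)$ we get the identity $\de_{s_i}=\de_e+\al_i Y_i$ in $Q_W$ (writing $\al_i$ for $\al_i\de_e$), so by linearity and the left-action property, $\de_{s_i}\bullet\xi_w=\xi_w+(\al_i\de_e)\bullet(Y_i\bullet\xi_w)$. By~\eqref{eqn:bullet_dual_action_Y} (equivalently Lemma~\ref{lem:bullet_Schuberts} with $v=s_i$) we have $Y_i\bullet\xi_w=0$ when $s_i\notin D_R(w)$, giving $\de_{s_i}\bullet\xi_w=\xi_w$; and $Y_i\bullet\xi_w=\xi_{ws_i}$ when $s_i\in D_R(w)$. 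In the latter case I would apply the first formula with $p=\al_i$ and $w$ replaced by $ws_i$, and use $(ws_i)(\al_i)=-w(\al_i)$, to obtain $(\al_i\de_e)\bullet\xi_{ws_i}=-w(\al_i)\,\xi_{ws_i}-\sum_{ws_i\xra{\be}v}\be^\vee(\al_i)\,\xi_v$, which gives exactly the stated expression.

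The argument is essentially bookkeeping and I do not expect a genuine obstacle; the only points that need care are the right-multiplication convention in the Hecke action (so that one pairs $\xi_w$ with $Y_v\,p$, not $p\,Y_v$), identifying precisely which $v$ acquire a nonzero coefficient when expanding in the $\xi_v$-basis, and the sign $s_i(\al_i)=-\al_i$ entering the second formula.
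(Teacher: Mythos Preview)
Your proposal is correct and follows essentially the same approach as the paper: compute the coefficients by pairing with $Y_v$ and invoking the affine relation~\eqref{eqn:general_aff2}, then obtain the second formula from the first via the identity $\de_{s_i}=\de_e+\al_i Y_i$ together with Lemma~\ref{lem:bullet_Schuberts}. You have in fact been slightly more careful than the paper in spelling out the sign $(ws_i)(\al_i)=-w(\al_i)$ and the uniqueness of $\be$ in the cover relation.
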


\begin{proof}
For the first part, write $$(p\de_e)\bullet\xi_w=\sum_{v\in W} c_v \xi_v$$
for some coefficients $c_v\in Q$.  By duality of bases and relation~\eqref{eqn:general_aff2}, we have
\begin{align*}
c_v=(p\de_e\bullet\xi_w)(Y_v)&=\xi_w(Y_vp)\\
&=\xi_w(v(p)\, Y_v - \sum_{v'\xra{\be} v} \be^{\vee}(p)\, Y_{v'}).
\end{align*}
This implies
\[
c_v=
\begin{cases} w(p) & \text{if $v=w$},\\
\be^\vee(p) & \text{if $w\xra{\be} v$},\\
0 & \text{otherwise.}
\end{cases}
\]
proving formula~\eqref{eqn:bullet_group_action1}.
As for the second formula note that $\de_{s_i}=\al_i Y_i+\de_e$ and, hence,
\[
\de_{s_i}\bullet \xi_w=\xi_w+ (\al_iY_i)\bullet \xi_w=\xi_w+ \al_i\de_e\bullet (Y_i\bullet \xi_w).
\]
The second part of the lemma follows
from formulas~\eqref{eqn:bullet_dual_action_Y} and~\eqref{eqn:bullet_group_action1}.
\end{proof}

\begin{prop}\label{prop:local_coefficients_prop2}
The local coefficients $d_{w,v}$ satisfy the recursive formula
\[
d_{w,vs_i}-d_{w,v}=
\begin{cases} v(\al_i)d_{ws_i,v} & \text{if $s_i\in D_R(w)$},\\
0 & \text{otherwise.}\end{cases}
\]
Furthermore, we have
\[
d_{s_i,v}=\om_i-v(\om_i)
\] where $\om_i$ denotes the $i$-th fundamental weight.
In particular, if $w\leq v$, then the local coefficient $d_{w,v}$ is a homogeneous polynomial in the simple roots
$\{\al_1,\ldots,\al_n\}$ of degree $\ell(w)$.
\end{prop}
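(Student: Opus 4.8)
The plan is to derive all three parts from the Hecke action of $Q_W$ on $Q_W^*$ (Definition~\ref{Def:Hecke_action} and formula~\eqref{eqn:bullet_dual_action_Y}) together with the transition data recorded in Proposition~\ref{prop:transistion_coefficients}, with no input from geometry. For the recursion, the starting point is the identity $\de_{s_i}=\al_i Y_i+\de_e$, immediate from $Y_i=\tfrac1{\al_i}(\de_{s_i}-\de_e)$. Multiplying on the left by $\de_v$ and using the twisted relation $\de_v\al_i=v(\al_i)\de_v$ gives $\de_{vs_i}-\de_v=v(\al_i)\,\de_v Y_i$ in $Q_W$. Since $d_{w,u}=\xi_w(\de_u)$ and $\xi_w$ is $Q$-linear, this yields $d_{w,vs_i}-d_{w,v}=v(\al_i)\,\xi_w(\de_v Y_i)=v(\al_i)\,(Y_i\bullet\xi_w)(\de_v)$, the last equality being the definition of the Hecke action; formula~\eqref{eqn:bullet_dual_action_Y} then evaluates $Y_i\bullet\xi_w$ as $\xi_{ws_i}$ if $s_i\in D_R(w)$ and as $0$ otherwise, and $\xi_{ws_i}(\de_v)=d_{ws_i,v}$ gives the asserted recursion.

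Next I would prove $d_{s_i,v}=\om_i-v(\om_i)$ by induction on $\ell(v)$, the base case being $d_{s_i,e}=\xi_{s_i}(Y_e)=\de_{s_i,e}=0=\om_i-e(\om_i)$. For the step, choose $s_j\in D_R(v)$ and write $v=v's_j$ with $\ell(v')=\ell(v)-1$, and apply the recursion with $w=s_i$ (so $D_R(s_i)=\{s_i\}$). If $j\neq i$ then $s_j(\om_i)=\om_i$, so $v(\om_i)=v'(\om_i)$, while the recursion gives $d_{s_i,v}=d_{s_i,v'}$; the inductive hypothesis closes the case. If $j=i$ then $s_i(\om_i)=\om_i-\al_i$, so $v(\om_i)=v'(\om_i)-v'(\al_i)$, while the recursion together with $d_{e,v'}=1$ gives $d_{s_i,v}=d_{s_i,v'}+v'(\al_i)$, and the inductive hypothesis $d_{s_i,v'}=\om_i-v'(\om_i)$ again closes the case. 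Note that $\om_i-v(\om_i)$ always lies in the root lattice $\cl_r$, hence is a homogeneous polynomial of degree $1$ in the simple roots.

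For the degree statement I would induct on $\ell(v)$, proving the slightly stronger assertion that $d_{w,v}\in\Sym^{\ell(w)}_R(\cl_r)$ for \emph{all} $w$ (with the convention that $0$ is homogeneous of every degree); the case $w\le v$ is what is claimed. If $\ell(v)\le\ell(w)$ then either $w\not\le v$ and $d_{w,v}=0$ by Proposition~\ref{prop:transistion_coefficients}(1), or $v=w$ and $d_{w,w}=\prod_{\al\in\RS^+\cap w\RS^-}\al$ by Proposition~\ref{prop:transistion_coefficients}(2) is a product of $\ell(w)$ roots, hence homogeneous of degree $\ell(w)$. If $\ell(v)>\ell(w)$, pick $s_i\in D_R(v)$, write $v=v's_i$, and use the recursion: when $s_i\notin D_R(w)$ it gives $d_{w,v}=d_{w,v'}$, which has degree $\ell(w)$ by induction; when $s_i\in D_R(w)$ it gives $d_{w,v}=d_{w,v'}+v'(\al_i)\,d_{ws_i,v'}$, where by induction $d_{w,v'}$ has degree $\ell(w)$, the root $v'(\al_i)$ has degree $1$, and $d_{ws_i,v'}$ has degree $\ell(ws_i)=\ell(w)-1$, so the sum again lies in $\Sym^{\ell(w)}_R(\cl_r)$. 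Both $v'$-terms are reached by the inductive hypothesis since $\ell(v')<\ell(v)$, so a single induction on $\ell(v)$ (uniform over all first indices) suffices and no separate induction on $\ell(w)$ is needed.

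The bulk of the work is routine twisted-commutation bookkeeping and checking that everything stays inside $\Sym_R(\cl_r)$ rather than the larger $\Sym_R(\cl)$ (which works because the inversion roots of $w$ and the elements $v'(\al_i)$ all lie in $\cl_r$). The one point requiring a little thought is the induction scheme in the last step: although the recursion lowers the first index in the term $d_{ws_i,v'}$, that term also has strictly shorter second index, so a plain induction on the length of the second argument, invoking Proposition~\ref{prop:transistion_coefficients}(1),(2) for the base range $\ell(v)\le\ell(w)$, closes the argument.
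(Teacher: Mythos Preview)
Your proof is correct and follows essentially the same strategy as the paper: the recursion is derived from the Hecke action together with Lemma~\ref{lem:bullet_Schuberts} (equivalently formula~\eqref{eqn:bullet_dual_action_Y}), the formula for $d_{s_i,v}$ is proved by induction on $\ell(v)$, and the degree statement follows from the recursion and Proposition~\ref{prop:transistion_coefficients}. Your organization differs in two minor but pleasant ways: for the recursion you work directly in $Q_W$ via the identity $\de_{vs_i}-\de_v=v(\al_i)\,\de_vY_i$ and then evaluate $\xi_w$, whereas the paper expands $\xi_w$ in the group basis and applies~\eqref{eqn:Hecke_action_group_basis2} termwise; and for $d_{s_i,v}$ you choose an arbitrary right descent $s_j\in D_R(v)$, which makes the induction step uniform, while the paper fixes the same index $i$ throughout and splits on whether $s_i\in D_R(v)$.
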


\begin{proof}
First note that $d_{e,v}=1$ for all $v\in W$ and thus the proposition is true for $w=e$.
Suppose that $\ell(w)\geq 1$.
Formula~\eqref{eqn:Hecke_action_group_basis2} implies that the Hecke action
\[
Y_i\bullet\xi_w=\sum_{v\in W} d_{w,v}\, (Y_i\bullet\psi_v)=\sum_{v\in W} \frac{d_{w,v}}{v(\al_i)}\, (\psi_{vs_i}-\psi_v).
\]
The recursive formula for $d_{w,v}$ now follows from Lemma \ref{lem:bullet_Schuberts}.

We prove the second part of the proposition by induction.
First note that $d_{s_i,e}=0$ by Proposition \ref{prop:transistion_coefficients} part (1).
Also note that simple reflections act on their corresponding fundamental weights by  $s_i(\om_i)=\om_i-\al_i$.
Now assume $\ell(v)\geq 1$ and suppose that $s_i\in D_R(v)$.
Then the recursive formula gives
\begin{align*}
d_{s_i,v}=d_{s_i, vs_i}-v(\al_i)d_{e,v} &= (\om_i-vs_i(\om_i))-v(\al_i)\\ &= \om_i-v(s_i(\om_i)+\al_i)=\om_i-v(\om_i).
\end{align*}
Otherwise, if $s_i\notin D_R(v)$, then again by Proposition \ref{prop:transistion_coefficients}~part (4), we have
\begin{align*}
d_{s_i, vs_i}=d_{s_i,v}+v(\al_i)d_{e,v} &= (\om_i-v(\om_i))+v(\al_i) \\ &=\om_i-v(\om_i-\al_i)=\om_i-vs_i(\om_i).
\end{align*}
Finally, the fact that $d_{w,v}$ are homogeneous polynomials in the simple roots of degree $\ell(w)$ follows inductively
from the recursive formula and that $d_{e,v}=1$ for all $v\in W$.
\end{proof}


\subsection{\it Dual nil-Hecke ring}
In this section we focus on the dual to the nil-Hecke ring and
give several formulas for the structure constants with respect to the Schubert basis.
Define $\NH^*$ the $S$-submodule of $Q_W^*$ generated by the Schubert basis
$\{\xi_w\}_{w\in W}$.
It is easy to see that
\[
\NH^*=\Hom_{S}(\NH,S)
\]
is the $S$-linear dual to the nil-Hecke ring $\NH$.
Theorem \ref{thm:coproduct_recursion} implies that
\begin{equation}\label{eqn:Schubert_structure_constants}
\xi_u\cdot \xi_v=\sum_{w\in W}p_{u,v}^w\, \xi_w
\end{equation}
with all coefficients $p_{u,v}^w\in S$.
Hence $\NH^*$ is an $S$-subalgebra of the $Q$-algebra $Q_W^*$.
Lemma \ref{lem:bullet_Schuberts} implies that $\NH^*$ is stable under the Hecke action restricted to the nil-Hecke ring $\NH\subset Q_W$.
It is easy to check that the twisted Leibniz rule from Lemma~\ref{lem:bullet_properties}
translates to the following analogous property on Hecke action
\begin{equation}\label{eqn:Leibniz_Hecke}
Y_i\bullet (f\cdot g)=(Y_i\bullet f)\cdot g + (\de_{s_i}\bullet f)\cdot(Y_i\bullet g)
\end{equation}
Using the twisted Leibniz rule, we can prove the following formula for the structure coefficients $p_{u,v}^w$.

\begin{thm}\label{thm:Hecke_product_formula}
Let $w\in W$ and fix a reduced word $w=s_{i_1}\cdots s_{i_r}$, then for any $u,v\in W$ we have
\[
p_{u,v}^w=\sum_{(j_1,\ldots, j_s)} (Y_{i_1}\cdots \hat{Y}_{j_1}\cdots \hat{Y}_{j_s}\cdots Y_{i_r})\bullet \xi_v(\de_e)
\]
where $\hat{Y}_{j}$ means replace $Y_j$ with $\de_{s_j}$ and
the sum is over all subsequences $(j_1,\ldots, j_s)$ of $(i_1,\ldots,i_r)$
for which $s_{j_1}\cdots s_{j_r}$ is a reduced word of $u$.
\end{thm}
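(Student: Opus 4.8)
The plan is to realize $p_{u,v}^w$ as a value of the Hecke action evaluated at $\de_e$ and then expand it with the twisted Leibniz rule~\eqref{eqn:Leibniz_Hecke}. The starting observation is the identity
\[
f(z)=(z\bullet f)(\de_e),\qquad z\in Q_W,\ f\in Q_W^*,
\]
which is immediate from the definition of the Hecke action, since $(z\bullet f)(\de_e)=f(\de_e z)=f(z)$; one also uses that $\bullet$ is a left action, i.e. $(z_1z_2)\bullet f=z_1\bullet(z_2\bullet f)$. Combining this with the product formula~\eqref{eqn:Schubert_structure_constants}, the duality $\xi_z(Y_w)=\de_{z,w}$, and the cocommutativity of $Q_W^*$ (so $\xi_u\cdot\xi_v=\xi_v\cdot\xi_u$ and $p_{u,v}^w=p_{v,u}^w$), I get
\[
p_{u,v}^w=(\xi_v\cdot\xi_u)(Y_w)=\big(Y_w\bullet(\xi_v\cdot\xi_u)\big)(\de_e).
\]

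Next I would prove a general Leibniz-type expansion by induction on $r$ using~\eqref{eqn:Leibniz_Hecke}: for any $f,g\in Q_W^*$ and any word $\ii=(i_1,\dots,i_r)\in[n]^r$,
\[
(Y_{i_1}\cdots Y_{i_r})\bullet(f\cdot g)=\sum_{J\subseteq[r]}\big(Z^J\bullet f\big)\cdot\big(Y^J\bullet g\big),
\]
where, writing $J=\{j_1<\cdots<j_s\}$, one sets $Y^J:=Y_{i_{j_1}}\cdots Y_{i_{j_s}}\in\NH$ and $Z^J:=Z_1\cdots Z_r\in Q_W$ with $Z_k:=\de_{s_{i_k}}$ for $k\in J$ and $Z_k:=Y_{i_k}$ for $k\notin J$. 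In the inductive step one applies $Y_{i_1}$ to each summand coming from the tail $(i_2,\dots,i_r)$; by~\eqref{eqn:Leibniz_Hecke} this either leaves the right factor alone and prepends $Y_{i_1}$ to $Z^J$ (keeping $1\notin J$), or prepends $\de_{s_{i_1}}$ to $Z^J$ and $Y_{i_1}$ to $Y^J$ (adding $1$ to $J$), using again that iterated Hecke actions compose, so the accumulated operators on each side assemble into a single element of $Q_W$ in the order of the positions.

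Specializing $f=\xi_v$, $g=\xi_u$ and evaluating at $\de_e$ using $(f\cdot g)(\de_e)=f(\de_e)g(\de_e)$ together with the identity of the first step, I obtain
\[
p_{u,v}^w=\sum_{J\subseteq[r]}\big(Z^J\bullet\xi_v\big)(\de_e)\cdot\xi_u(Y^J).
\]
By Lemma~\ref{lem:Nil-Coxeter} the product $Y^J=Y_{i_{j_1}}\cdots Y_{i_{j_s}}$ equals $Y_z$ when the subword $\ii_J$ is a reduced expression for $z$ and vanishes otherwise, so $\xi_u(Y^J)$ is $1$ exactly when $\ii_J$ is a reduced expression for $u$ and $0$ otherwise. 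Hence only those $J$ survive, and for each such $J$ the surviving factor $\big(Z^J\bullet\xi_v\big)(\de_e)$ is precisely $\big(Y_{i_1}\cdots\hat Y_{j_1}\cdots\hat Y_{j_s}\cdots Y_{i_r}\big)\bullet\xi_v(\de_e)$ in the notation of the statement (with the positions $j_1,\dots,j_s$ identified with the corresponding entries of $\ii$, as is implicit there). This is the asserted formula; note that it automatically returns $0$ when $u\not\le w$, consistent with the fact that $p_{u,v}^w\neq 0$ only if $u\le w$.

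I expect the main technical point to be the careful bookkeeping in the Leibniz expansion, in particular verifying that every position $k$ contributes exactly one factor ($Y_{i_k}$ or $\de_{s_{i_k}}$) to the $f$-side, that these together with the $Y_{i_k}$ on the $g$-side accumulate in the correct left-to-right order, and that one may legitimately interchange iterated Hecke actions with multiplication in $Q_W$. The remaining ingredients — the two evaluation identities and the collapse of the sum via Lemma~\ref{lem:Nil-Coxeter} — are then routine once the expansion is in hand.
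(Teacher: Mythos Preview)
Your proof is correct and follows essentially the same route as the paper: realize $p_{u,v}^w$ as $\big(Y_w\bullet(\xi_u\cdot\xi_v)\big)(\de_e)$, iterate the twisted Leibniz rule~\eqref{eqn:Leibniz_Hecke} to expand over all subsets $J\subseteq[r]$, and then filter so that only reduced subwords for $u$ survive. The only cosmetic difference is in the filtering step: you evaluate at $\de_e$ first and use $\big(Y^J\bullet\xi_u\big)(\de_e)=\xi_u(Y^J)$ together with Lemma~\ref{lem:Nil-Coxeter}, whereas the paper invokes Lemma~\ref{lem:bullet_Schuberts} to replace $(Y_{j_1}\cdots Y_{j_s})\bullet\xi_u$ by $\xi_e$ before evaluating; both collapse the sum in the same way.
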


\begin{proof}
First note that
\[
Y_w\bullet (\xi_u\cdot \xi_v)=\sum_{w'\in W}p_{u,v}^{w'}\, (Y_w\bullet\xi_{w'}).
\]
Since $Y_e=\de_e$, Lemma \ref{lem:bullet_Schuberts} implies that
\[
p_{u,v}^{w}=Y_w\bullet (\xi_u\cdot \xi_v)(\de_e).
\]
Write $Y_w=Y_{i_1}\cdots Y_{i_r}$.
Lemma \ref{lem:bullet_Schuberts} together with repeated applications of the twisted Leibniz rule in formula~\eqref{eqn:Leibniz_Hecke} gives
\begin{align*}
Y_w\bullet (\xi_u\cdot \xi_v)&=\sum_{(j_1,\ldots, j_s)} ((Y_{j_1}\cdots Y_{j_s})\bullet\xi_u)\cdot((Y_{i_1}\cdots \hat{Y}_{j_1}\cdots \hat{Y}_{j_s}\cdots Y_{i_r})\bullet \xi_v)\\
&=\sum_{(j_1,\ldots, j_s)} \xi_e\cdot((Y_{i_1}\cdots \hat{Y}_{j_1}\cdots \hat{Y}_{j_s}\cdots Y_{i_r})\bullet \xi_v),
\end{align*}
where the sum is over all subsequences $(j_1,\ldots, j_s)$ of $(i_1,\ldots,i_r)$ for which $s_{j_1}\cdots s_{j_r}$ is a reduced word of $u$.
Evaluating both sides at $\de_e$ gives the desired result.
\end{proof}
In the next example, we compute some coefficients $p_{u,v}^w$ using Theorem \ref{thm:Hecke_product_formula} and compare them to the recursive formula given in Theorem \ref{thm:coproduct_recursion}.
\begin{ex}
We consider some examples of type $\tA_2$.  As in Example \ref{ex:coproduct_recursion}, we use $``i"$ to denote $s_i$.
We use Lemmas \ref{lem:bullet_Schuberts} and \ref{lem:bullet_group_basis} to calculate the Hecke action of $Y_i$ and $\de_{s_i}$ respectively. First we have
\[
p_{1,12}^{121}=(\de_1Y_{21}\bullet \xi_{12})(\de_e)+ (Y_{12}\,\de_1\bullet\xi_{12})(\de_e)=0+ (Y_{12}\bullet\xi_{12})(\de_e)=\xi_e(\de_e)=1.
\]
Next, we have
\begin{align*}
p_{1,21}^{121}&=(\de_1Y_{21}\bullet \xi_{21})(\de_e)+ (Y_{12}\,\de_1\bullet\xi_{21})(\de_e)\\
&=1 + Y_{12}\bullet\big(\xi_{21}+s_2(\al_1)\xi_2-\al_1^{\vee}(\al_1)\xi_{21}-\al_1^{\vee}(s_2(\al_1))\xi_{12}\big)(\de_e)\\
&=1 + Y_{12}\bullet(s_2(\al_1)\xi_2-\xi_{21}+\xi_{12})(\de_e)\\
&=1-\xi_e(\de_e)=0.
\end{align*}
We contrast this calculation with other formulas for $p_{u,v}^w$.  Using the recursive formula from Theorem \ref{thm:coproduct_recursion}, we get
$$p_{1,12}^{121}=Y_1(p_{1,12}^{21})+s_1(p_{e,12}^{21}+p_{1,2}^{21})+\al_1s_1(p_{e,2}^{21})=s_1s_2(p_{1,e}^1)=1$$
and
$$p_{1,21}^{121}=Y_1(p_{1,21}^{21})+ s_1(p_{e,21}^{21})=Y_1(s_2(\al_1))+ 1=(1-2)+1=0.$$
Using $\ii$-admissible bounded bijections from Theorem \ref{thm:bbijections2} with $\ii=(1,2,1)$, we get
$$p_{1,12}^{121}=p^\ii_{\{3\},\{1,2\}}+p^\ii_{\{1\},\{1,2\}}=1+0=1$$
and
$$p_{1,21}^{121}=p^\ii_{\{3\},\{2,3\}}+p^\ii_{\{1\},\{2,3\}}=-1+1=0.$$
\end{ex}

In the case of $\ell(u)=1$, we can use Proposition \ref{prop:local_coefficients_prop2} to recover Chevalley's formula.

\begin{thm}
For any $v, s_i\in W$, we have
\[\xi_{s_i}\cdot\xi_v=d_{s_i,v}\cdot\xi_v+\sum_{v\xra{\be} w} \be^{\vee}(\om_i)\cdot\xi_w.\]
\end{thm}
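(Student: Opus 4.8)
The plan is to compute the product $\xi_{s_i}\cdot\xi_v$ directly from the coproduct structure coefficients using the Hecke action, following the same strategy as Theorem~\ref{thm:Hecke_product_formula} but exploiting that $u=s_i$ has length one. Write $\xi_{s_i}\cdot\xi_v=\sum_{w\in W}p_{s_i,v}^w\,\xi_w$, and recall from the proof of Theorem~\ref{thm:Hecke_product_formula} that $p_{s_i,v}^w=(Y_w\bullet(\xi_{s_i}\cdot\xi_v))(\de_e)$. First I would observe that by part~(i) of the Corollary following Theorem~\ref{thm:coproduct_recursion}, a nonzero $p_{s_i,v}^w$ forces $\ell(w)=\ell(v)+\ell(w)-\ldots$; more precisely $\deg p_{s_i,v}^w=1+\ell(v)-\ell(w)$, so $w$ ranges over elements with $\ell(w)\in\{\ell(v),\ell(v)+1\}$ and $w\geq v$. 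Thus only two families of terms occur: the ``degree-zero in the leading sense'' case $w=v$ (giving a polynomial coefficient of degree $1$), and the cases $\ell(w)=\ell(v)+1$ (giving an integer, in fact a coroot-evaluation).

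For the terms with $\ell(w)=\ell(v)+1$, I would extract them from the augmented coproduct: these are exactly the $c_{s_i,v}^w$, so by the formula preceding Corollary~\ref{cor:bbijections3} together with the combinatorics of adding a single simple reflection, $c_{s_i,v}^w\neq 0$ only when $w=vs_\be$ for a root $\be\in\RS^+$ with $\ell(w)=\ell(v)+1$, i.e. $v\xra{\be}w$. The value of this coefficient should come out to $\be^\vee(\om_i)$; the cleanest way to see this is to use the Hecke-action computation $p_{s_i,v}^w=(Y_w\bullet(\xi_{s_i}\cdot\xi_v))(\de_e)$, expand via the twisted Leibniz rule~\eqref{eqn:Leibniz_Hecke} along a reduced word for $w$, and use Lemma~\ref{lem:bullet_Schuberts} to see that $(Y_{j}\bullet\xi_{s_i})$ is $\xi_e$ precisely when $s_j=s_i$; what remains is $(\de_{s_j}\cdots\bullet\xi_v)(\de_e)$ for the complementary subword, and Lemma~\ref{lem:bullet_group_basis} identifies the relevant coefficient as a value $\be^\vee$ of something. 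To convert $\be^\vee(p)$-type expressions into $\be^\vee(\om_i)$ I would invoke Proposition~\ref{prop:local_coefficients_prop2}, whose formula $d_{s_i,v}=\om_i-v(\om_i)$ is exactly tailored to this: the length-one Schubert class $\xi_{s_i}$ has group-basis coefficients $d_{s_i,v}=\om_i-v(\om_i)$, and these feed through the Hecke action.

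For the remaining term $w=v$, I would compute $p_{s_i,v}^v$ directly: by Lemma~\ref{lem:group-schubert_product} and the relation $d_{s_i,v}=\om_i-v(\om_i)$, or more simply by noting $\psi_v\cdot\xi_{s_i}=\sum_w(c_{w,v}d_{s_i,v})\xi_w$ and comparing the coefficient of $\xi_v$, the leading coefficient is $d_{s_i,v}$. Alternatively, pair $\xi_{s_i}\cdot\xi_v$ with $Y_v$: since $Y_v\bullet\xi_v$ need not be $\xi_e$, one uses instead that the coefficient of $\xi_v$ equals $(\xi_{s_i}\cdot\xi_v)(Y_v)$ expanded in the group basis, which gives $\xi_{s_i}(\de_v)\cdot\xi_v(\de_v)$ summed appropriately; cleanest is to say the diagonal transition $\xi_{s_i}=\sum_v d_{s_i,v}\psi_v$ together with component-wise multiplication~\eqref{eqn:group_basis_product} forces $p_{s_i,v}^v=d_{s_i,v}$. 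Assembling the two contributions yields
\[
\xi_{s_i}\cdot\xi_v=d_{s_i,v}\,\xi_v+\sum_{v\xra{\be}w}\be^\vee(\om_i)\,\xi_w,
\]
which is the claim. The main obstacle I anticipate is bookkeeping the single-reflection case of the Hecke-action/Leibniz expansion carefully enough to pin down that the coefficient is precisely $\be^\vee(\om_i)$ (and not, say, $\be^\vee$ of some $v$-translated weight), and to check that no extra terms with $\ell(w)=\ell(v)+1$ but $w\not>v$ appear; Proposition~\ref{prop:local_coefficients_prop2} and the Bruhat-order support statement in the Corollary to Theorem~\ref{thm:coproduct_recursion} are exactly what close that gap.
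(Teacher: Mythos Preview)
Your approach is genuinely different from the paper's, and the gap you yourself flag is real.

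You set things up as a case analysis: use degree and Bruhat--support constraints to reduce to the two families $w=v$ and $v\xra{\be}w$, then try to compute $p_{s_i,v}^w$ separately in each. The $w=v$ part is fine (the support argument $v\le x\le w$ in the identity $p_{s_i,v}^w=\sum_x c_{w,x}d_{s_i,x}d_{v,x}$ collapses to $x=v$, and $c_{v,v}d_{v,v}=1$). But for $v\xra{\be}w$, your proposed route via Theorem~\ref{thm:Hecke_product_formula} gives
\[
p_{s_i,v}^w=\sum_{j:\,i_j=i}\big(Y_{i_1}\cdots Y_{i_{j-1}}\,\de_{s_i}\,Y_{i_{j+1}}\cdots Y_{i_r}\big)\bullet\xi_v(\de_e),
\]
a sum over \emph{all} positions in the chosen reduced word of $w$ where $s_i$ occurs. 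There can be several such positions, and each term individually is not $\be^\vee(\om_i)$; you would still need a nontrivial cancellation/recombination argument to collapse the sum to a single coroot evaluation. Neither Lemma~\ref{lem:bullet_group_basis} nor Proposition~\ref{prop:local_coefficients_prop2} does this for you directly. So as written the proposal does not close.

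The paper avoids this bookkeeping entirely by a one-line algebraic identity. Using $d_{s_i,w}=\om_i-w(\om_i)$ from Proposition~\ref{prop:local_coefficients_prop2} and the Hecke action on the group basis \eqref{eqn:Hecke_action_group_basis}, one has
\[
\xi_{s_i}\cdot\xi_v=\sum_{w}(\om_i-w(\om_i))\,\psi_w\cdot\xi_v=\om_i\,\xi_v-(\om_i\de_e)\bullet\xi_v,
\]
since $\sum_w\psi_w=\xi_e$ is the unit and $w(\om_i)\psi_w=(\om_i\de_e)\bullet\psi_w$. Then Lemma~\ref{lem:bullet_group_basis} computes $(\om_i\de_e)\bullet\xi_v$ in one stroke, producing both the $d_{s_i,v}\xi_v$ term and the $\be^\vee(\om_i)$ terms simultaneously, with no sum over reduced-word positions to manage. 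If you want to salvage your case analysis, the quickest fix is to feed the identity $d_{s_i,x}=\om_i-x(\om_i)$ into $p_{s_i,v}^w=\sum_x c_{w,x}d_{s_i,x}d_{v,x}$ and recognize the resulting expression as $\om_i\,\de_{w,v}-((\om_i\de_e)\bullet\xi_v)(Y_w)$, which again reduces to Lemma~\ref{lem:bullet_group_basis}; but at that point you have essentially reproduced the paper's argument.
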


\begin{proof}
Using Proposition~\ref{prop:local_coefficients_prop2} and formula~\eqref{eqn:Hecke_action_group_basis}, we have that
\begin{align*}
\xi_{s_i}\cdot\xi_v=\Big(\sum_{w\in W} d_{s_i,w} \psi_w\Big)\cdot\xi_v &=\sum_{w\in W} (\om_i-w(\om_i)) \psi_w\cdot\xi_v\\
&= \sum_{w\in W}(\om_i\xi_v)\cdot\psi_w-\sum_{w\in W}((\om_i\de_e)\bullet \psi_w)\cdot\xi_v\\
&= \Big(\sum_{w\in W} \psi_w\Big)(\om_i\xi_v-(\om_i\de_e)\bullet \xi_v)\\
&= \om_i\xi_v-(\om_i\de_e)\bullet \xi_v.
\end{align*}
Formula~\eqref{eqn:bullet_group_action1} from Lemma \ref{lem:bullet_group_basis} says that
\[(\om_i\de_e)\bullet \xi_v=v(\om_i)\xi_v-\sum_{v\xra{\be} w}\be^{\vee}(\om_i) \xi_w.\]
Combining these two formulas proves the Chevalley formula.
\end{proof}

Next, we give a formula for $p_{u,v}^w$ in terms of the transition coefficients. As a consequence, we an analogue of the formula given in~\eqref{eqn:cuv_formula} for the coefficients $d_{u,v}$.  This formula is sometimes called the Billey localization formula and was first described in \cite{Bi99}.  For a survey on Billey's formula and its applications, see \cite{Ty}.

\begin{thm}
For any $u,v\in W$, let
\[\xi_u\cdot \xi_v=\sum_{w\in W}p_{u,v}^w\, \xi_w.\]
Then
\[p_{u,v}^w=\sum_{x\in W}d_{u,x}\cdot d_{v,x}\cdot c_{w,x}.\]
In particular, $p_{u,v}^v=d_{u,v}$ and for any reduced word  $v=s_{i_1}\cdots s_{i_r}$, we have
\begin{equation}\label{eqn:Billey_formula}
d_{u,v}=\sum_{(j_1,\ldots, j_s)} \be_{j_1}\cdots \be_{j_s}
\end{equation}
where the sum is over all subsequences $(j_1,\ldots, j_s)\subseteq(i_1,\ldots,i_r)$ such that $s_{j_1}\cdots s_{j_s}$ is a reduced word for $u$ and $\be_k:=s_{i_1}\cdots s_{i_{k-1}}(\al_{i_k})$.
\end{thm}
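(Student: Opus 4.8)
The plan is to work in the dual algebra $Q_W^*$ and to move freely between the group basis $\{\psi_w\}$ and the Schubert basis $\{\xi_w\}$ using the transition matrices $C=[c_{w,v}]$ and $D=[d_{w,v}]$. First I would prove the identity $p_{u,v}^w=\sum_{x\in W}d_{u,x}d_{v,x}c_{w,x}$. Writing $\xi_u=\sum_x d_{u,x}\psi_x$ and $\xi_v=\sum_x d_{v,x}\psi_x$ and using the component-wise multiplication of the group basis in~\eqref{eqn:group_basis_product} gives $\xi_u\cdot\xi_v=\sum_x d_{u,x}d_{v,x}\psi_x$. On the other hand, from $\xi_x=\sum_w d_{x,w}\psi_w$ together with the relation $D^{\mathrm t}=C^{-1}$ (hence $D^{-1}=C^{\mathrm t}$) one gets $\psi_x=\sum_w c_{w,x}\xi_w$; substituting this into the previous equation and comparing coefficients of $\xi_w$ yields the claimed formula.

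For the specialization $p_{u,v}^v=d_{u,v}$ I would set $w=v$ in the formula just proved and invoke the support statements of Proposition~\ref{prop:transistion_coefficients}(1): $d_{v,x}=0$ unless $v\le x$, and $c_{v,x}=0$ unless $x\le v$. Therefore only the term $x=v$ survives, and using $c_{v,v}=d_{v,v}^{-1}$ from Proposition~\ref{prop:transistion_coefficients}(2) we obtain $p_{u,v}^v=d_{u,v}\,d_{v,v}\,c_{v,v}=d_{u,v}$.

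The main step is the subword formula~\eqref{eqn:Billey_formula}. The key observation is that $d_{u,v}$ is the coefficient of $Y_u$ when $\de_v$ is expanded in the $S$-basis $\{Y_w\}$ of $\NH$: inverting $Y_w=\sum_x c_{w,x}\de_x$ and using $C^{-1}=D^{\mathrm t}$ gives $\de_v=\sum_w d_{w,v}Y_w$ (equivalently, $d_{u,v}=\xi_u(\de_v)$ while $\xi_u(Y_w)=\de_{u,w}$). I would then prove by induction on $r$ that for a reduced word $v=s_{i_1}\cdots s_{i_r}$,
\[
\de_v=\de_{s_{i_1}}\cdots\de_{s_{i_r}}=\sum_{J=\{k_1<\cdots<k_s\}\subseteq[r]}\Big(\prod_{k\in J}\be_k\Big)\,Y_{i_{k_1}}\cdots Y_{i_{k_s}},\qquad \be_k:=s_{i_1}\cdots s_{i_{k-1}}(\al_{i_k}).
\]
For the inductive step I would write $\de_{s_{i_r}}=1+\al_{i_r}Y_{i_r}$, use $\de_{s_{i_1}}\cdots\de_{s_{i_{r-1}}}=\de_{s_{i_1}\cdots s_{i_{r-1}}}$ together with the twisted commutation $\de_w\,p=w(p)\,\de_w$ to move $\al_{i_r}$ past the product (it becomes precisely $\be_r$), and then reorganize the two resulting sums — one over subsets of $[r-1]$, one over subsets of $[r]$ containing $r$ — into a single sum over subsets of $[r]$. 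Finally, by Lemma~\ref{lem:Nil-Coxeter} a product $Y_{i_{k_1}}\cdots Y_{i_{k_s}}$ equals $Y_u$ exactly when $(i_{k_1},\ldots,i_{k_s})$ is a reduced word for $u$, and vanishes otherwise; extracting the coefficient of $Y_u$ from the displayed expansion gives exactly~\eqref{eqn:Billey_formula}.

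I expect the only real obstacle to be the bookkeeping in this induction, specifically making sure the accumulated coefficient at position $k$ is $s_{i_1}\cdots s_{i_{k-1}}(\al_{i_k})$ — built from the full prefix of the chosen reduced word for $v$, not from the selected subword — which is exactly what the identity $\de_{s_{i_1}}\cdots\de_{s_{i_{r-1}}}=\de_{s_{i_1}\cdots s_{i_{r-1}}}$ and the twisted commutation relation arrange. Everything else is formal manipulation with the bases and transition matrices already established in this section.
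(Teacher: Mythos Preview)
Your argument is correct. The derivation of $p_{u,v}^w=\sum_x d_{u,x}d_{v,x}c_{w,x}$ and of $p_{u,v}^v=d_{u,v}$ matches the paper's essentially verbatim (the paper phrases the first step via Lemma~\ref{lem:group-schubert_product}, but that lemma is exactly your computation packaged up).

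Where you diverge from the paper is in the proof of Billey's formula~\eqref{eqn:Billey_formula}. The paper derives it by observing that $d_{u,v}=p_{u,v}^v$ and then tracking this coefficient through the coproduct recursion of Theorem~\ref{thm:coproduct_recursion}; this is efficient but leaves the reader to unwind that recursion to see the subword sum emerge. Your route is more direct and arguably cleaner: you expand $\de_v$ in the $\{Y_w\}$ basis by writing $\de_{s_i}=1+\al_i Y_i$, using $\de_{v'}\al_{i_r}=v'(\al_{i_r})\de_{v'}=\be_r\de_{v'}$, and inducting on $r$. This makes the subword structure and the factors $\be_k=s_{i_1}\cdots s_{i_{k-1}}(\al_{i_k})$ appear transparently, and Lemma~\ref{lem:Nil-Coxeter} then kills the non-reduced subwords. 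The advantage of your approach is that it is self-contained and does not appeal back to the coproduct machinery; the paper's approach, on the other hand, ties the formula into the structure-constant recursion already developed, which is thematically consistent with the section.
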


\begin{proof}
By Lemma \ref{lem:group-schubert_product}, we have
\begin{align*}
\xi_u\cdot\xi_v=\sum_{x\in W} d_{u,x}(\psi_{x}\cdot\xi_v)&=\sum_{x\in W}d_{u,x} \sum_{w\in W}(c_{w,x}\cdot d_{v,x})\, \xi_{w}\\
&=\sum_{w\in W}\Big(\sum_{x\in W}d_{u,x}\cdot c_{w,x}\cdot d_{v,x}\Big)\, \xi_{w}.
\end{align*}
Now if $d_{u,x}\cdot c_{w,x}\cdot d_{v,x}\neq 0$, then Proposition \ref{prop:transistion_coefficients} part (1) implies that $v\leq x\leq w$ (note the we also must have $u\leq x\leq w$).  Hence if $w=v$, then \[p_{u,v}^v=d_{u,v}\cdot (c_{v,v}\cdot d_{v,v})=d_{u,v}\]
by Proposition \ref{prop:transistion_coefficients} part (2).  Finally, applying the recursive formula from Theorem \ref{thm:coproduct_recursion} to the coefficient $p_{u,v}^v$ shows that it satisfies formula~\eqref{eqn:Billey_formula} and hence so does $d_{u,v}$.
\end{proof}


\section{Equivariant cohomology of the flag variety}\label{sec:eqcoh}

In this section, we outline the classical connections between torus-equivariant singular cohomology of flag varieties and nil-Hecke rings following \cite[Chapter 11.3]{Ku02}.
For more details on the definition and properties of equivariant cohomology of flag varieties see \cite{Anderson-Fulton-book} and \cite{CZZ2}.

\subsection{\it The localization map}
Let $G$ be a split semi-simple linear algebraic group over a field $\kk$ of characteristic $0$ together with
a split maximal torus $T$ and a Borel subgroup $B$ containing $T$.
The (complete) flag variety of $G$ is defined to be the quotient space $G/B$.
It is a smooth projective algebraic variety with a left $T$-action given by the usual group product.
The Weyl group $W:=N_G(T)/T$ of $G$ indexes the $B$-orbits of the flag variety in the Bruhat decomposition
\[
G/B=\bigsqcup_{w\in W} BwB/B.
\]
The $B$-orbits $BwB/B$ are called the Schubert cells and their closures $\overline{BwB/B}$ are called the Schubert varieties.

Consider the $T$-equivariant singular cohomology ring $\SC_T^*(G/B;\zz)$ of $G/B$ with integer coefficients.
Its odd-degree part vanishes (since $G/B$ is a cellular space),
and its even-degree part can be identified with the equivariant Chow ring $\CH_T^*(G/B)$ (see \cite{Br}).
Via this identification the coefficient ring gives the symmetric algebra:
\[
\SC_T^{2*}(pt;\zz)\simeq S=\Sym_{\zz}(\cl),
\]
where $\cl$ is the character group of $T$.  For instance, if $G$ is simply-connected,
then $\cl$ is the weight lattice and $\SC_T^{2*}(pt;\zz)\simeq S$ (the isomorphism of graded rings)
is the polynomial ring in fundamental weights.
In what follows, we will always deal with the even-degree part $\SC^{2i}_T(G/B;\zz)$
which we denote by $\CH_T^i(G/B)$.

For each Schubert variety, we define the $T$-equivariant Schubert class
\[
\sigma^T_w:=[\overline{BwB/B}]\in \CH_T^*(G/B).
\]
It is well known that as $S$-modules:
\[
\CH_T^*(G/B)\simeq S\otimes_{\zz}\left( \bigoplus_{w\in W} \zz\cdot \sigma^T_w\right).
\]
Hence, the Schubert classes form a basis of $\CH_T^*(G/B)$ as a $S$-module.
Each $B$-orbit contains a unique $T$-fixed point $wB/B$ and, therefore, the Weyl group also indexes the fixed point set
\[
(G/B)^T=\bigsqcup_{w\in W}\{wB/B\}.
\]
The inclusion
\[
\imath\colon (G/B)^T\hra G/B
\]
induces an injective ring homomorphism on $T$-equviarant cohomology
\[
\imath^*\colon \CH_T^*(G/B)\to \CH_T^*((G/B)^T)\simeq \bigoplus_{w\in W}S.
\]
The induced map $i^*$ is called the localization (moment) map on $T$-equivariant cohomology.
Recall that the dual to the nil-Hecke ring $\NH^*$ is the $S$-submodule of $Q^*_W$ with Schubert basis $\{\xi_w\}_{w\in W}$.
We define a $S$-linear map
\[
\nu\colon \NH^*\to \bigoplus_{w\in W}S
\]
by mapping each Schubert basis element
\[
\nu(\xi_w):=\bigoplus_{v\in W}\xi_w(\de_v)=\bigoplus_{v\in W}d_{w,v}.
\]
Proposition \ref{prop:local_coefficients_prop2} and Lemma \ref{lem:semisimple_lattices} imply that
each $d_{w,v}\in S$ and hence $\nu$ is well-defined.
In fact, it is not difficult to show that $\nu$ is an injective $S$-algebra homomorphism.
Thus, the image of $\nu$ identifies the dual of the nil-Hecke ring $\NH^*$ as a $S$-subalgebra of $\CH_T^*((G/B)^T).$

\begin{thm}\label{thm:equiv_cohomology}
The image $\image(\imath^*)=\nu(\NH^*)\subseteq \CH_T^*((G/B)^T)$, and thus there is a $S$-algebra isomorphism
\[
\imath^*\colon \CH_T^*(G/B)\to\nu(\NH^*)\simeq \NH^*
\]
with $\imath^*(\sigma_w^T)=\nu(\xi_w)$ for any $w\in W$.
In particular, the $T$-equviariant Schubert structure constants $p_{u,v}^w$ defined by the cup product
\[
\sigma_v^T\cdot \sigma_u^T=\sum_{w\in W}p_{u,v}^w\, \sigma_w^T
\]
are the same as the structure constants given in~\eqref{eqn:Schubert_structure_constants}.
\end{thm}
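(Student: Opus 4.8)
The plan is to reduce the whole theorem to the single identity
\[
\imath^*(\sigma_w^T)=\nu(\xi_w)\qquad\text{for all }w\in W,
\]
i.e.\ to the computation of each equivariant Schubert class at each $T$-fixed point, $\sigma_w^T|_{vB/B}=d_{w,v}$. Once this is known, the rest is formal: $\{\sigma_w^T\}_{w\in W}$ is an $S$-basis of $\CH_T^*(G/B)$, $\{\xi_w\}_{w\in W}$ is an $S$-basis of $\NH^*$, $\imath^*$ is a ring homomorphism that is $S$-linear (as $S=\CH_T^*(pt)$), and $\nu$ is an injective $S$-algebra homomorphism; hence $\imath^*$ carries the $S$-basis $\{\sigma_w^T\}$ bijectively onto the $S$-basis $\{\nu(\xi_w)\}$ of $\nu(\NH^*)$, so it restricts to an $S$-algebra isomorphism $\CH_T^*(G/B)\to\nu(\NH^*)\simeq\NH^*$ with $\sigma_w^T\mapsto\xi_w$, and the claim that the cup-product constants equal the $p_{u,v}^w$ of~\eqref{eqn:Schubert_structure_constants} is immediate.

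To establish the identity I would bring in the BGG--Demazure operators. For each simple reflection $s_i$ the projection $\pi_i\colon G/B\to G/P_i$ onto the minimal parabolic quotient is a $\mathbb{P}^1$-bundle, and the push-pull operator $A_i:=\pi_i^{*}(\pi_i)_{*}$ on $\CH_T^*(G/B)$ is $\CH_T^*(G/P_i)$-linear, satisfies the nilpotent and braid relations, and acts on Schubert classes by $A_i(\sigma_w^T)=\sigma_{ws_i}^T$ if $s_i\in D_R(w)$ and $A_i(\sigma_w^T)=0$ otherwise. Since $\pi_i$ collapses the fixed-point pairs $\{vB/B,\,vs_iB/B\}$, the localization map $\imath^*$ intertwines $A_i$ with the Hecke action of $Y_i$ on $\CH_T^*((G/B)^T)=\bigoplus_{v\in W}S\subseteq Q_W^*$; this is the fixed-point incarnation of the single-step Demazure recursion~\eqref{eqn:bullet_dual_action_Y}. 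Using the braid relations, $A_u:=A_{i_1}\cdots A_{i_r}$ is well defined for a reduced word $u=s_{i_1}\cdots s_{i_r}$, and iterating the single-step rule gives $A_u(\sigma_w^T)=\sigma_{wu^{-1}}^T$ when $\ell(wu^{-1})=\ell(w)-\ell(u)$ and $A_u(\sigma_w^T)=0$ otherwise, exactly parallel to the action of $Y_u\bullet(-)$ on the Schubert basis in Lemma~\ref{lem:bullet_Schuberts}.

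Next I would restrict these classes to the base point. Since $\sigma_e^T=[G/B]=1$ restricts to $1$ at every fixed point while $\sigma_v^T|_{eB/B}=0$ for $v\neq e$ (the point $eB/B$ lies in the Schubert variety indexed by $v$ only for $v=e$), the formula for $A_u(\sigma_w^T)$ above gives $A_u(\sigma_w^T)|_{eB/B}=\de_{u,w}$ for all $u,w\in W$. On the other hand, for any $z\in\CH_T^*(G/B)$ one has $\imath^*(z)(\de_e)=z|_{eB/B}$, and, using $\de_e=1$ together with the intertwining,
\[
\imath^*(z)(Y_u)=\bigl(Y_u\bullet\imath^*(z)\bigr)(\de_e)=\imath^*(A_uz)(\de_e)=(A_uz)|_{eB/B}.
\]
Applying this with $z=\sigma_w^T$ yields $\imath^*(\sigma_w^T)(Y_u)=\de_{u,w}$ for every $u\in W$, and since $\{\xi_w\}_{w\in W}$ is by definition the $Q$-basis of $Q_W^*$ dual to $\{Y_w\}_{w\in W}$, this forces $\imath^*(\sigma_w^T)=\xi_w$, i.e.\ $\imath^*(\sigma_w^T)=\nu(\xi_w)$, completing the argument.

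The main obstacle is the geometric input of the middle step: constructing the operators $A_i$ from the parabolic $\mathbb{P}^1$-bundles, establishing their braid, nilpotent and $\CH_T^*(G/P_i)$-linearity properties together with their effect on Schubert classes, and verifying that $\imath^*$ really does intertwine them with the algebraic Hecke action of $Y_i$ on $\bigoplus_{v\in W}S$; one also uses the standard support fact $\sigma_v^T|_{eB/B}=\de_{v,e}$. With these geometric ingredients in hand everything else is bookkeeping with the dual bases $\{\psi_w\}_{w\in W}$ and $\{\xi_w\}_{w\in W}$, and I would follow \cite[Chapter~11.3]{Ku02} for the details.
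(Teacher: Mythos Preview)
Your proposal is correct and is precisely the standard Kostant--Kumar argument that the paper is invoking. Note that the paper does not actually give a proof of this theorem: it is stated without proof in Section~\ref{sec:eqcoh}, with the surrounding text explicitly pointing to \cite[Chapter~11.3]{Ku02} for the construction. Your outline---reduce to $\imath^*(\sigma_w^T)=\nu(\xi_w)$, realize the BGG--Demazure push-pull operators $A_i=\pi_i^*(\pi_i)_*$ geometrically via the $\mathbb{P}^1$-bundles $G/B\to G/P_i$, check that $\imath^*$ intertwines $A_i$ with the Hecke action of $Y_i$, and then pair against $Y_u$ using the base-point restriction---is exactly the route taken in Kumar's book, so you are filling in what the paper leaves implicit rather than deviating from it.

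One small caution on conventions: you write ``$\sigma_e^T=[G/B]=1$'' and ``$\sigma_v^T|_{eB/B}=0$ for $v\neq e$,'' which is the correct behaviour required by the theorem (since $\xi_e$ is the unit and $d_{v,e}=0$ for $v\neq e$), but the paper literally sets $\sigma_w^T:=[\overline{BwB/B}]$. For the stated identification $\sigma_w^T\mapsto\xi_w$ to hold with $\xi_w$ of degree $\ell(w)$, the class $\sigma_w^T$ must have codimension $\ell(w)$, i.e.\ one should read this as the opposite Schubert class (or re-index by $w_0w$). Your argument is internally consistent with the intended convention; just be aware of this when matching against the paper's notation.
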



\subsection{\it The evaluation map}
The base-point inclusion map induces a canonical evaluation map
\[
\eta\colon \CH_T^*(G/B)\to \CH^*(G/B).
\]
In particular, the map $\eta$ induces a graded $\zz$-algebra isomorphism
\[
\bar\eta\colon \zz\otimes_{S} \CH_T^*(G/B)\to \CH^*(G/B),
\]
where $\bar\eta(1\otimes p):=\eta(p)$.
Recall from Section \ref{S:augmented_coproduct} that the augmentation map $\veps\colon S\to R=\zz$ induces a map
\[
\veps_*\colon \NH \to \veps\NH\simeq \zz\otimes_{S}\NH.
\]
Augmentation also gives rise to an analgous map on the dual of the nil-Hecke ring
\[
\veps^*\colon \NH^*\to \veps\NH^*:=\zz\otimes_{S}\NH^*
\]
such that dualizing the co-commutative coproduct $\DI^{\veps}$ on $\veps\NH$
gives a commutative product structure on $\veps\NH^*$.
The $\zz$-algebra $\veps\NH^*$ and a Schubert basis $\{\veps_w\}_{w\in W}$
where $\veps_w:=\veps^*(\xi_w)$ and
\[
\veps_u\cdot \veps_v=\sum_{w\in W} c_{u,v}^w\, \veps_w.
\]
Define the map
\[
\bar{\imath}^*\colon \CH^*(G/B)\to \veps\NH^*
\]
by $\bar{\imath}^*(p):=\imath^*(\bar p)$ where $1\otimes \bar p:=\bar\eta^{-1}(p)$.

\begin{thm}\label{thm:cohom_eval}
The map $\bar{\imath}^*\colon \CH^*(G/B)\to \veps\NH^*$ is a $\zz$-algebra isomorphism such that the following diagram commutes:
\[
\xymatrix{
\CH^*_T(G/B) \ar[r]^{\eta} \ar[d]_{\tilde \imath^*}& \CH^*(G/B)  \ar[d]^{\bar{\imath}^*} \\
\NH^* \ar[r]^{\veps^*}& \veps\NH^*
}
\]
Specifically, if we denote the Schubert classes in $\CH^*(G/B)$ by $\sigma_w:=\eta(\sigma_w^T)$ , then $\bar{\imath}^*(\sigma_w)=\veps_w$ and
\[
\sigma_u\cdot \sigma_v=\sum_{w\in W} c_{u,v}^w\, \sigma_w.
\]
\end{thm}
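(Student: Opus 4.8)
The plan is to deduce the non-equivariant statement from the equivariant Theorem~\ref{thm:equiv_cohomology} by base change along the augmentation $\veps\colon S\to\zz$, the only genuinely new inputs being that $\bar\eta$ is a $\zz$-algebra isomorphism and that $\veps^*$ respects the ring structures. First I would set up the algebraic picture. Since $\NH$ is a free $S$-module of finite rank, so is $\NH^*=\Hom_S(\NH,S)$, and $\zz\otimes_S(-)$ commutes with $S$-linear dualization; this identifies $\veps\NH^*=\zz\otimes_S\NH^*$ with $\Hom_\zz(\veps\NH,\zz)$ and exhibits $\veps^*\colon\NH^*\to\veps\NH^*$, $f\mapsto 1\otimes f$, as the $S$-dual of $\veps_*\colon\NH\to\veps\NH$. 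Dualizing the commutative square relating $\DI$ and $\DI^\veps$ from Section~\ref{S:augmented_coproduct} then shows that the product on $\veps\NH^*$ coming from $\DI^\veps$ coincides with the product induced on the quotient $\zz\otimes_S\NH^*$ of the $S$-algebra $\NH^*$; equivalently, $\veps^*$ is a surjective $\zz$-algebra homomorphism with kernel $S^+\cdot\NH^*$, where $S^+:=\ker\veps$. Applying $\zz\otimes_S(-)$ to the $S$-algebra isomorphism $\tilde\imath^*\colon\CH_T^*(G/B)\xra{\sim}\NH^*$ of Theorem~\ref{thm:equiv_cohomology} then produces a $\zz$-algebra isomorphism $\zz\otimes_S\tilde\imath^*\colon\zz\otimes_S\CH_T^*(G/B)\xra{\sim}\veps\NH^*$.

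Next I would identify $\bar\imath^*$ with the composite $(\zz\otimes_S\tilde\imath^*)\circ\bar\eta^{-1}$: by its definition $\bar\imath^*(p)$ is the class in $\zz\otimes_S\NH^*$ of $\tilde\imath^*(\bar p)$ for any lift $\bar p$ with $\bar\eta^{-1}(p)=1\otimes\bar p$, and this is independent of the lift precisely because $\tilde\imath^*$ is $S$-linear (two lifts differ by an element of $S^+\cdot\CH_T^*(G/B)$). Since $\bar\eta$ is a graded $\zz$-algebra isomorphism by hypothesis and $\zz\otimes_S\tilde\imath^*$ is a $\zz$-algebra isomorphism by the previous step, $\bar\imath^*$ is a $\zz$-algebra isomorphism. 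For commutativity of the square I would write $\bar\imath^*\circ\eta=(\zz\otimes_S\tilde\imath^*)\circ(\bar\eta^{-1}\circ\eta)$ and observe that $\bar\eta^{-1}\circ\eta$ is the canonical map $p\mapsto 1\otimes p$ (since $\bar\eta(1\otimes p)=\eta(p)$), so that $\bar\imath^*\circ\eta$ sends $p$ to $1\otimes\tilde\imath^*(p)=\veps^*(\tilde\imath^*(p))$, i.e.\ $\bar\imath^*\circ\eta=\veps^*\circ\tilde\imath^*$.

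Finally I would read off the asserted formulas. Commutativity of the square together with $\tilde\imath^*(\sigma_w^T)=\xi_w$ (Theorem~\ref{thm:equiv_cohomology}) gives $\bar\imath^*(\sigma_w)=\bar\imath^*(\eta(\sigma_w^T))=\veps^*(\tilde\imath^*(\sigma_w^T))=\veps^*(\xi_w)=\veps_w$. Since $\bar\imath^*$ is a $\zz$-algebra isomorphism carrying $\sigma_w$ to $\veps_w$ and $\veps_u\cdot\veps_v=\sum_w c_{u,v}^w\,\veps_w$ by the definition of the augmented structure constants, applying $(\bar\imath^*)^{-1}$ yields $\sigma_u\cdot\sigma_v=\sum_w c_{u,v}^w\,\sigma_w$; alternatively one applies the ring homomorphism $\eta$ (which restricts to $\veps$ on the coefficient ring $S$) to the equivariant identity $\sigma_v^T\cdot\sigma_u^T=\sum_w p_{u,v}^w\,\sigma_w^T$ and uses $\veps(p_{u,v}^w)=c_{u,v}^w$, which holds because $p_{u,v}^w$ is homogeneous of degree $\ell(u)+\ell(v)-\ell(w)$. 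The step I expect to require the most care is the first one: verifying that the $\DI^\veps$-dual product on $\veps\NH^*$ really is the quotient product inherited from the $S$-algebra $\NH^*$, so that $\zz\otimes_S\tilde\imath^*$ lands as a ring map in the intended ring; once this is pinned down the rest is formal base change.
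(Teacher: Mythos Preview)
The paper does not actually supply a proof of this theorem; it is stated as part of the ``outline of classical connections'' following \cite[Chapter~11.3]{Ku02}, so there is no in-paper argument to compare against. Your base-change approach---tensoring the $S$-algebra isomorphism $\CH_T^*(G/B)\simeq\NH^*$ of Theorem~\ref{thm:equiv_cohomology} along $\veps\colon S\to\zz$ and composing with $\bar\eta^{-1}$---is the standard and correct way to deduce the non-equivariant statement, and your verification that $\veps^*$ is a ring map (by dualizing the $\DI$/$\DI^\veps$ square from Section~\ref{S:augmented_coproduct}, using that $\NH$ is $S$-free of finite rank) is exactly the point that needs checking. One small notational remark: the paper writes $\bar\imath^*(p):=\imath^*(\bar p)$ with target $\veps\NH^*$, which as you observed only makes sense after passing to the class in $\zz\otimes_S\NH^*$; your reading of this as $(\zz\otimes_S\tilde\imath^*)\circ\bar\eta^{-1}$ is the intended one.
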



\subsection{\it Partial flag varieties}
Let $\mathcal{R}$ denote the simple generating set of the Weyl group $W$ corresponding to the Borel subgroup $B$.
For any subset $J\subseteq \mathcal{R}$, let $W_J$ denote the subgroup generated by $J$ and
let \[P_J:=BW_JB\] denote the standard parabolic subgroup of $J$.
As with the complete flag variety, the partial flag variety, defined as the quotient space $G/P_J$,
is also a smooth, projective algebraic variety with a left $T$-action.
The $T$-fixed points and Schubert subvarieties of $G/P_J$ are indexed
by the set of minimal length coset representatives $W^J\simeq W/W_J$.
For any $w\in W^J$, we have the Schubert classes
\[
\sigma_{w,J}^T:=[\overline{BwP/P}]\in \CH_T^*(G/P_J)
\]
and
\[
\CH_T^*(G/P_J)\simeq S \otimes_{\zz}\left( \bigoplus_{w\in W^J} \zz\cdot \sigma^T_{w,J}\right).
\]
Since $B\subseteq P_J$, there is a natural projection map
\[
\pi\colon G/B\twoheadrightarrow G/P_J
\] given by $\pi(gB)=gP$ which induces an injective ring homomorphism
\[
\pi^*\colon \CH^*_T(G/P_J)\to \CH^*_T(G/B).
\]
On Schubert classes, we have for any $w\in W^J$, that $\pi^*(\sigma_{w,J}^T)=\sigma_w^T$.
The ring $\CH_T(G/P_J)$ can be realized as a certain sub-ring of invariants in $\NH^*$.
Recall the Hecke action of $Q_W$ on $Q_W^*$ given in Definition \ref{Def:Hecke_action}.
Lemma \ref{lem:bullet_group_basis} implies that twisted group algebra $S_W$
preserves the dual of the nil-Hecke ring $\NH^*\subseteq Q_W^*$.
Hence the Hecke action gives rise to an action of the Weyl group $W$ on $\NH^*$ given by
\[
w\bullet \xi_v:=(\de_w)\bullet \xi_v.
\]
Let $(\NH^*)^{J}$ denote the sub-algebra of $W_J$-invariants in $\NH^*$ and let
\[
\tilde\pi^*\colon (\NH^*)^{J}\to \NH^*
\]
denote the inclusion of algebras.
Lemma \ref{lem:bullet_group_basis} implies $\xi_w\in (\NH^*)^{J}$ if and only if $w\in W^J$.
Consider the composition of maps
\[
\imath^*\circ\pi^*\colon \CH_T(G/P_J)\to \CH_T^*((G/B)^T).
\]
We denote this composition by $\tilde\imath^*:=i^*\circ\pi^*$.

\begin{thm}
The map $\tilde \imath^*\colon \CH^*_T(G/P_J)\to \CH_T^*((G/B)^T)$ induces an $S$-algebra isomorphism
\[
\tilde \imath^*\colon \CH^*_T(G/P_J)\to \nu((\NH^*)^{J})\simeq (\NH^*)^{J}
\]
such that the following diagram commutes:
\[
\xymatrix{
\CH^*_T(G/P_J) \ar[r]^{\pi^*} \ar[d]^{\tilde \imath^*}& \CH^*_T(G/B)  \ar[d]^{\imath^*} \\
(\NH^*)^{J} \ar[r]^{\tilde\pi^*}& \NH^*
}
\]
In particular, for any $w\in W^J$, we have $\tilde \imath^*(\sigma_{w,J}^T)=\xi_w$ and hence
\[
\sigma_{u,J}^T\cdot \sigma_{v,J}^T=\sum_{w\in W^J}p_{u,v}^w\, \sigma_{w,J}^T.
\]
\end{thm}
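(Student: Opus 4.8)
The plan is to deduce the statement from the complete-flag case (Theorem~\ref{thm:equiv_cohomology}) together with an explicit description of the $W_J$-invariant part of $\NH^*$ under the Hecke action. Recall from the discussion above that $\tilde\imath^*=\imath^*\circ\pi^*$, that $\pi^*$ is an injective ring homomorphism with $\pi^*(\sigma_{w,J}^T)=\sigma_w^T$ for $w\in W^J$, and that $\imath^*$ is an $S$-algebra isomorphism $\CH_T^*(G/B)\xrightarrow{\sim}\nu(\NH^*)$ sending $\sigma_w^T$ to $\nu(\xi_w)$. Hence $\tilde\imath^*$ is an injective $S$-algebra homomorphism carrying the $S$-basis $\{\sigma_{w,J}^T\}_{w\in W^J}$ of $\CH_T^*(G/P_J)$ to $\{\nu(\xi_w)\}_{w\in W^J}$, and since $\nu$ is an injective $S$-algebra homomorphism on all of $\NH^*$, it remains to show that $\{\xi_w\}_{w\in W^J}$ is an $S$-basis of $(\NH^*)^J$; the rest is a diagram chase.

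First I would determine $(\NH^*)^J$. The $W_J$-action on $\NH^*$ is generated by the operators $\de_{s_i}\bullet(-)$ for $s_i\in J$, each of which is an $S$-linear ring automorphism of $\NH^*$ with $\de_{s_i}^2=1$ (the ring property is immediate from $(fg)(\de_v)=f(\de_v)g(\de_v)$, and stability of $\NH^*$ is Lemma~\ref{lem:bullet_group_basis}). Using $\al_iY_i=\de_{s_i}-\de_e$ in $Q_W$, one gets $\de_{s_i}\bullet f=f+(\al_i\de_e)\bullet(Y_i\bullet f)$, and since $((\al_i\de_e)\bullet g)(\de_v)=v(\al_i)\,g(\de_v)$ with $v(\al_i)$ a regular root, the identity $\de_{s_i}\bullet f=f$ is equivalent to $Y_i\bullet f=0$. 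Writing $f=\sum_{v\in W}a_v\xi_v$ with $a_v\in S$, Lemma~\ref{lem:bullet_Schuberts} gives $Y_i\bullet f=\sum_{v:\,s_i\in D_R(v)}a_v\,\xi_{vs_i}$; as $v\mapsto vs_i$ is injective on $\{v:s_i\in D_R(v)\}$, this vanishes precisely when $a_v=0$ for all $v$ with $s_i\in D_R(v)$. Intersecting over $s_i\in J$ and using $v\in W^J\iff D_R(v)\cap J=\emptyset$, I obtain
\[
(\NH^*)^J=\bigoplus_{w\in W^J}S\,\xi_w ,
\]
which is the desired free $S$-module (this refines the ``only if'' remark already noted from Lemma~\ref{lem:bullet_group_basis}).

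Assembling: $\nu$ restricts to an isomorphism $(\NH^*)^J\xrightarrow{\sim}\nu((\NH^*)^J)$, and $\tilde\imath^*$ sends the $S$-basis $\{\sigma_{w,J}^T\}$ bijectively onto the $S$-basis $\{\nu(\xi_w)\}_{w\in W^J}$ of $\nu((\NH^*)^J)$; being moreover an injective ring homomorphism, it is an $S$-algebra isomorphism $\CH_T^*(G/P_J)\xrightarrow{\sim}\nu((\NH^*)^J)\simeq(\NH^*)^J$ with $\tilde\imath^*(\sigma_{w,J}^T)=\xi_w$. Commutativity of the square with $\pi^*$ and the inclusion $\tilde\pi^*\colon(\NH^*)^J\hookrightarrow\NH^*$ follows by evaluating both composites on the basis $\{\sigma_{w,J}^T\}$, where each side produces $\xi_w$ (identified with $\nu(\xi_w)$). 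For the structure constants, note that $\xi_u\xi_v\in(\NH^*)^J$ because the $\de_{s_i}\bullet(-)$ are ring automorphisms fixing $\xi_u$ and $\xi_v$; comparing the expansion $\xi_u\xi_v=\sum_{w\in W}p_{u,v}^w\xi_w$ from~\eqref{eqn:Schubert_structure_constants} (equivalently Theorem~\ref{thm:coproduct_recursion}) with the basis of $(\NH^*)^J$ forces $p_{u,v}^w=0$ for $w\notin W^J$, and transporting the product through the isomorphism $\tilde\imath^*$ yields $\sigma_{u,J}^T\cdot\sigma_{v,J}^T=\sum_{w\in W^J}p_{u,v}^w\,\sigma_{w,J}^T$.

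The step I expect to be the crux is the identification $(\NH^*)^J=\bigoplus_{w\in W^J}S\,\xi_w$: passing from ``$\xi_w$ is $W_J$-invariant iff $w\in W^J$'' to ``these exhaust the invariants'' needs the reduction of $\de_{s_i}$-invariance to vanishing under the Bruhat-triangular operator $Y_i\bullet(-)$, together with the small regularity argument that kills the factor $(\al_i\de_e)\bullet(-)$. Everything else is formal, resting on the already-established $G/B$ isomorphism and the stated properties of $\pi^*$.
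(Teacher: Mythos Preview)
Your proof is correct, and in fact supplies more than the paper does: the paper states this theorem without proof, as part of the expository overview in Section~\ref{sec:eqcoh} (citing \cite[Chapter~11.3]{Ku02}). The surrounding text only asserts that ``Lemma~\ref{lem:bullet_group_basis} implies $\xi_w\in(\NH^*)^J$ if and only if $w\in W^J$,'' which characterizes the invariant basis vectors but does not by itself show that they span the invariants. Your argument fills precisely this gap: reducing $\de_{s_i}$-invariance to $Y_i\bullet f=0$ via $\de_{s_i}=\de_e+\al_iY_i$ and the regularity of roots, and then reading off the vanishing coefficients from Lemma~\ref{lem:bullet_Schuberts}, gives the clean identification $(\NH^*)^J=\bigoplus_{w\in W^J}S\,\xi_w$. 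The remainder of your proof is the expected transport along $\pi^*$ and Theorem~\ref{thm:equiv_cohomology}, and the observation that the $\de_{s_i}\bullet(-)$ are ring automorphisms forces $p_{u,v}^w=0$ for $w\notin W^J$ is a nice way to close the loop on the structure-constant statement.
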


As with the full flag variety, the partial flag variety also has an evaluation map
\[
\eta\colon \CH_T^*(G/P_J)\to \CH^*(G/P_J).
\]
We also have the corresponding subalgebra of $W_J$-invariants on the dual to the augmented nil-Hecke ring
\[
(\veps\NH^*)^{J}=\zz\otimes_{S} (\NH^*)^{J}.
\]
The following theorem is a generalization of Theorem \ref{thm:cohom_eval} for partial flag varieties

\begin{thm}
The map $\bar \imath^*\colon \CH^*(G/P_J)\to (\veps\NH^*)^{W_{J}}$ is a $\zz$-algebra isomorphism
such that the following diagram commutes:
\[
\xymatrix{
\CH^*_T(G/P_J) \ar[r]^{\eta} \ar[d]_{\tilde \imath^*}& \CH^*(G/P_J)  \ar[d]^{\bar \imath^*} \\
(\NH^*)^{J} \ar[r]^{\veps^*}& (\veps\NH^*)^{J}
}
\]
If we denote the Schubert classes in $\CH^*(G/P_J)$ by $\sigma_{w,J}:=\eta(\sigma_{w,J}^T)$ for $w\in W^J$,
then $\bar{\imath}^*(\sigma_{w,J})=\veps_w$ and
\[
\sigma_{u,J}\cdot \sigma_{v,J}=\sum_{w\in W^P} c_{u,v}^w\, \sigma_{w,J}.
\]
\end{thm}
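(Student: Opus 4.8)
The plan is to deduce this statement from the preceding equivariant partial-flag theorem by the base change $\veps\colon S\to\zz$, in exact parallel to how Theorem~\ref{thm:cohom_eval} is obtained from Theorem~\ref{thm:equiv_cohomology}. Two preliminary ingredients should be set up first. First, since $G/P_J$ admits a Bruhat decomposition into affine cells indexed by $W^J$, the ring $\CH^*_T(G/P_J)$ is a free $S$-module on the Schubert classes $\{\sigma^T_{w,J}\}_{w\in W^J}$, and the evaluation map induces a graded $\zz$-algebra isomorphism $\bar\eta\colon\zz\otimes_S\CH^*_T(G/P_J)\xrightarrow{\sim}\CH^*(G/P_J)$, $\bar\eta(1\otimes p)=\eta(p)$; this is the partial-flag analogue of the isomorphism recorded for $G/B$ (it can also be read off from the $G/B$ case, since $\pi^*$ intertwines the two evaluation maps). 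Second, by definition $(\veps\NH^*)^{J}=\zz\otimes_S(\NH^*)^{J}$, and the natural base-change map is exactly $\veps^*$ restricted to $(\NH^*)^{J}$.

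With these in place, I would apply the functor $\zz\otimes_S(-)$ to the $S$-algebra isomorphism $\tilde\imath^*\colon\CH^*_T(G/P_J)\xrightarrow{\sim}(\NH^*)^{J}$ from the previous theorem; both sides are free $S$-modules, so the module structures are unharmed. This yields an isomorphism of $\zz$-algebras
\[
\zz\otimes_S\tilde\imath^*\colon\zz\otimes_S\CH^*_T(G/P_J)\xrightarrow{\sim}\zz\otimes_S(\NH^*)^{J}=(\veps\NH^*)^{J},
\]
and I define $\bar\imath^*:=(\zz\otimes_S\tilde\imath^*)\circ\bar\eta^{-1}$, which is then a $\zz$-algebra isomorphism $\CH^*(G/P_J)\to(\veps\NH^*)^{J}$. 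Chasing a Schubert class, $\bar\eta^{-1}(\sigma_{w,J})=1\otimes\sigma^T_{w,J}$ and $(\zz\otimes_S\tilde\imath^*)(1\otimes\sigma^T_{w,J})=1\otimes\xi_w=\veps_w$, so $\bar\imath^*(\sigma_{w,J})=\veps_w$ for all $w\in W^J$.

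For the commutative square I would note that $\bar\eta^{-1}\circ\eta$ and $\veps^*$ are, on the nose, the natural base-change maps $p\mapsto 1\otimes p$ out of $\CH^*_T(G/P_J)$ and $(\NH^*)^{J}$; hence $\bar\imath^*\circ\eta(p)=(\zz\otimes_S\tilde\imath^*)(1\otimes p)=1\otimes\tilde\imath^*(p)=\veps^*\circ\tilde\imath^*(p)$, which is precisely commutativity of the diagram. For the structure-constant formula: since $(\NH^*)^{J}$ is a subalgebra of $\NH^*$ closed under the Schubert product~\eqref{eqn:Schubert_structure_constants}, for $u,v\in W^J$ one has $\xi_u\cdot\xi_v=\sum_{w\in W^J}p^w_{u,v}\,\xi_w$, and applying $\veps^*$ leaves only the degree-zero terms, giving $\veps_u\cdot\veps_v=\sum_{w\in W^J}c^w_{u,v}\,\veps_w$ in $(\veps\NH^*)^{J}$; transporting this along $(\bar\imath^*)^{-1}$, which sends $\veps_w\mapsto\sigma_{w,J}$, yields $\sigma_{u,J}\cdot\sigma_{v,J}=\sum_{w\in W^J}c^w_{u,v}\,\sigma_{w,J}$.

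The step carrying the actual content — and the one I would be most careful about — is the first preliminary ingredient: that passage from the $T$-equivariant ring to the ordinary cohomology ring of $G/P_J$ is exactly the base change $\zz\otimes_S(-)$ and is compatible with cup products. Everything after that is formal naturality of base change together with the already-established equivariant identification $\CH^*_T(G/P_J)\simeq(\NH^*)^{J}$; in particular the $W_J$-invariance requires no further argument, since it is built into the source objects on both sides of the square.
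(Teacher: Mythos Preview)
The paper does not supply a proof of this theorem: it is stated as ``a generalization of Theorem~\ref{thm:cohom_eval} for partial flag varieties'' and left at that (and Theorem~\ref{thm:cohom_eval} itself is stated without proof). Your argument---base-changing the equivariant isomorphism $\tilde\imath^*\colon\CH^*_T(G/P_J)\xrightarrow{\sim}(\NH^*)^J$ along $\veps\colon S\to\zz$ and identifying the result with $\CH^*(G/P_J)$ via $\bar\eta$---is correct and is exactly the argument the paper is gesturing toward, carried out in parallel to the full-flag case; in particular your definition $\bar\imath^*=(\zz\otimes_S\tilde\imath^*)\circ\bar\eta^{-1}$ matches the paper's definition of $\bar\imath^*$ in the $G/B$ setting.
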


\begin{rem}  It has been classically known for a long time that the augmented Schubert structure constants $c_{u,v}^w$ are non-negative integers.  The proof is geometric in nature and uses the intersection theory of flag varieties.  In \cite{Ga01}, Graham proved, using invariant cycles, that the $T$-equivariant Schubert structure constants $p_{u,v}^w$ are non-negative in $S(\cl_r)\subseteq S(\cl)$.  In other words, he shows that $p_{u,v}^w$, as a polynomial in the simple roots, has non-negative integer coefficients.  Later in \cite{An07}, Anderson gave an alternate proof of this fact using intersection theory which is analogous to the classical geometric proof for augmented coefficients.  It is an open question in algebraic combinatorics to find a combinatorial explanation for the positivity properties of the coefficients $c_{u,v}^w$ and $p_{u,v}^w$.  While the nil-Hecke ring provides many algebraic formulas for these coefficients, none have been shown to be combinatorially or algebraically positive outside a few exceptional cases.
\end{rem}

\subsection{\it Smoothness criterion}
We finish this section with a smoothness criteria for Schubert varieties derived from the nil-Hecke ring.  This formula is proved by Kumar in \cite{Ku96}.  By definition the Schubert variety is the closure of a $B$-orbit in $G/B$ and its boundary is given by the Bruhat partial order on $W$.
\[X(w):=\overline{BwB/B} =\bigsqcup_{v\leq w}BvB/B\]

In general, Schubert varieties are singular with its singular locus comprising of a union of $B$-orbits on the boundary of their open Schubert cell.  To determine if a $B$-orbit is singular, it suffices to check if the corresponding $T$-fixed point within the orbit is singular.  The singularity/smoothness of such points can be checked using the nil-Hecke ring.  In fact, the nil-Hecke ring can also detect when a $T$-fixed point (and hence, a $B$-orbit) is rationally smooth.  Recall the change of basis coefficients $c_{w,v}\in Q$ defined in~\eqref{eqn:c_wv_coeficients}
and for $v\leq w$, define the set $$S(w,v):=\{\be\in \RS^+ \ |\ s_\be v\leq w\}.$$
The next proposition is proved in \cite{Ku96}:

\begin{prop}
  For $v\leq w$, there exists a unique homogeneous polynomial $c'_{w,v}\in S$ of degree $|S(w,v)|-\ell(w)$ such that
  $$c_{w,v}= c'_{w,v}\cdot (-1)^{\ell(w)-\ell(v)} \cdot\prod_{\be\in S(w,v)} \be^{-1}.$$
\end{prop}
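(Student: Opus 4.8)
The plan is to read $c'_{w,v}$ off the displayed identity and prove it is a polynomial; uniqueness and the degree come almost for free. Since $S$ is an integral domain and every root is regular in it, $\prod_{\be\in S(w,v)}\be$ is a nonzero element of $Q$, so if $c'_{w,v}$ exists it must equal $(-1)^{\ell(w)-\ell(v)}c_{w,v}\prod_{\be\in S(w,v)}\be$ computed in $Q$, which gives uniqueness. Moreover the formula for $c_{w,v}$ in Proposition~\ref{prop:transistion_coefficients} writes it as a sum of terms, each having exactly $\ell(w)$ linear forms in the denominator, so $c_{w,v}$ is homogeneous of degree $-\ell(w)$; hence once $c'_{w,v}$ is known to lie in $S$ it is automatically homogeneous of degree $|S(w,v)|-\ell(w)$. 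Thus the whole content is the divisibility $c_{w,v}\cdot\prod_{\be\in S(w,v)}\be\in S$.

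To prove this I would first reduce to the root lattice: the inclusion $\rl\hra\cl$ induces an injection $S(\rl)\hra S(\cl)$ (injective by semisimplicity) which leaves both $c_{w,v}$ and $\prod_{\be\in S(w,v)}\be$ unchanged, since all the roots occurring lie in $\rl$. So one may assume $\cl=\rl$ and $S=\OF[\al_1,\dots,\al_n]$, in which case every root generates a prime ideal of the normal noetherian domain $S$ and distinct positive roots are non-associate. Writing $\nu_\be$ for the valuation at $(\be)$ and noting that $c_{w,v}\in Q$ can only have poles along roots, the divisibility becomes the uniform estimate
\[
\nu_\be(c_{w,v})\ \ge\ -\,[\,\be\in S(w,v)\,]\qquad\text{for all }\be\in\RS^+,
\]
which I would prove by induction on $\ell(w)$ (trivial for $w=e$) using the recursion
\[
c_{w,v}=\tfrac{1}{\al_i}\bigl(s_i(c_{s_iw,s_iv})-c_{s_iw,v}\bigr),\qquad s_i\in D_L(w),
\]
obtained by expanding $Y_w=Y_iY_{s_iw}$ in $Q_W$.

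For a positive root $\be\neq\al_i$ the estimate drops out: $\al_i$ is a unit at $(\be)$, $s_i(\be)$ is again a positive root, and the inductive bounds applied to $c_{s_iw,v}$ and $c_{s_iw,s_iv}$ give $\nu_\be(c_{w,v})\ge-1$, with equality forcing $\be\in S(s_iw,v)$ or $s_i(\be)\in S(s_iw,s_iv)$; the Bruhat lifting property (in the forms ``$y\le w,\ s_i\in D_L(w)\Rightarrow s_iy\le w$'' and ``$y\le u,\ s_i\notin D_L(u)\Rightarrow s_iy\le s_iu$'') then forces $s_\be v\le w$, i.e.\ $\be\in S(w,v)$. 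For $\be=\al_i$, lifting already gives $\al_i\in S(w,v)$, so one only needs $\nu_{\al_i}(c_{w,v})\ge-1$; a short case analysis — on whether $v\le s_iw$ and whether $s_iv\le s_iw$ — reduces, via lifting again, every case but one to the vanishing of one of the two terms together with the inductive bound on the other.

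The hard case is $v\le s_iw$ and $s_iv\le s_iw$, where both terms can carry a simple pole along $\al_i$; here one must prove the residue cancellation $\nu_{\al_i}\bigl(s_i(c_{s_iw,s_iv})-c_{s_iw,v}\bigr)\ge0$, i.e.\ that $\al_i$ divides $s_i(c_{s_iw,s_iv})-c_{s_iw,v}$. I expect this to be the main obstacle. My plan for it is a secondary induction on $\ell(s_iw)$: applying the recursion once more to $c_{s_iw,\cdot}$ splits the expression into ``cross terms'' which pair off so that one pairing is exactly the cancellation statement one length down, while the others are controlled by the first-level estimate. As a sanity check and an alternative in the classical setting, the same divisibility follows from identifying $c_{w,v}$, up to the sign $(-1)^{\ell(w)-\ell(v)}$, with the equivariant multiplicity of the Schubert variety $X(w)$ at the $T$-fixed point $vB/B$, whose polar divisor consists exactly of the hyperplanes $\{\be=0\}$ with $\be\in S(w,v)$ — the weights of the $T$-stable curves of $X(w)$ through that point.
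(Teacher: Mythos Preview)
The paper does not give its own proof of this proposition; it simply cites Kumar~\cite{Ku96}. Kumar's argument there is the geometric one: he identifies $(-1)^{\ell(w)-\ell(v)}c_{w,v}$ with the equivariant multiplicity $e_v(X(w))$ of the Schubert variety at the $T$-fixed point $vB$, and then invokes the general fact (due to Rossmann/Arabia/Brion) that the equivariant multiplicity of a $T$-stable subvariety at a fixed point has denominator dividing the product of the weights of the $T$-stable curves through that point --- which for $X(w)$ at $vB$ are exactly the $\be\in S(w,v)$. This is precisely the ``alternative'' you mention at the very end of your proposal.

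Your primary route is different and more elementary in spirit: a direct valuation argument via the recursion $c_{w,v}=\al_i^{-1}\bigl(s_i(c_{s_iw,s_iv})-c_{s_iw,v}\bigr)$. Uniqueness, homogeneity, the reduction to $\cl=\rl$, the recursion itself, and the case $\be\neq\al_i$ are all handled correctly; the Bruhat lifting you use there does exactly what you claim. The gap is the ``hard case'' $\be=\al_i$ with both $v\le s_iw$ and $s_iv\le s_iw$: here the entire content of the proposition is concentrated in the residue cancellation $\nu_{\al_i}\bigl(s_i(c_{s_iw,s_iv})-c_{s_iw,v}\bigr)\ge0$, and your plan --- a secondary induction by applying the recursion once more to $c_{s_iw,\cdot}$ with some $s_j\in D_L(s_iw)$ --- does not close as stated. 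Expanding introduces terms like $s_is_j(c_{s_js_iw,\,s_js_iv})$, which are not instances of the same cancellation one length down (the reflection on the outside is $s_is_j$, not a simple one), so the ``cross terms pair off'' claim needs substantially more structure than you have written. As it stands this is a genuine gap in the primary argument.

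If you want a self-contained combinatorial proof rather than the geometric one, a cleaner way to get the residue cancellation is to prove directly the congruence $c_{u,v}\equiv c_{u,s_\be v}\pmod{\be}$ in $S_{(\be)}$ whenever $v,s_\be v\le u$: this is dual to the GKM condition $d_{w,v}\equiv d_{w,s_\be v}\pmod{\be}$ on the $d$-coefficients (which follows immediately from Billey's formula~\eqref{eqn:Billey_formula}), and can be obtained from it via the relation $D^{\mathrm t}=C^{-1}$ and a triangular inversion argument. That supplies exactly the missing divisibility at $\al_i$ and completes your inductive scheme.
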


By Proposition \ref{prop:transistion_coefficients}, if $v=w$, then $S(w,w)=\RS^+\cap w\RS^-$ and $c'_{w,w}=1$.  The following theorem states Kumar's criterion for smoothness of a $T$-fixed point of a Schubert variety.


\begin{thm}\label{thm:smoothness}
    The following are true:
    \begin{enumerate}
        \item[(i)] The point $vB$ is a smooth in $X(w)$ if and only if $c'_{w,v}=1$.
        \item[(ii)] The point $vB$ is a rationally smooth in $X(w)$ if and only if $c'_{w,y}$ is a positive integer for all $v\leq y\leq w$.
    \end{enumerate}
\end{thm}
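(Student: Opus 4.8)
The statement is Kumar's criterion, and since the paper explicitly attributes both parts to \cite{Ku96}, the cleanest route is to reduce everything to the algebraic data already developed: the transition coefficients $c_{w,v}$, the factorization $c_{w,v}=c'_{w,v}\cdot(-1)^{\ell(w)-\ell(v)}\prod_{\be\in S(w,v)}\be^{-1}$, and the Hecke action on $Q_W^*$. First I would recall the geometric input that makes the bridge: the localization of the equivariant fundamental class $[X(w)]\in\CH_T^*(G/B)$ at the fixed point $vB$ is, up to sign, the coefficient $c'_{w,v}\prod_{\be\in S(w,v)}\be^{-1}$ times the product of the positive roots, i.e.\ the class $c_{w,v}$ read through $\nu$ and the identification of Theorem~\ref{thm:equiv_cohomology}; equivalently, $c'_{w,v}$ is exactly the ratio of the equivariant multiplicity of $X(w)$ at $vB$ to the equivariant multiplicity of the smooth point (where it equals $1$). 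So the content of (i) is the classical fact that a point of a variety with a torus action and isolated fixed points is smooth if and only if its equivariant multiplicity equals that of a smooth point, combined with the identification of that equivariant multiplicity with $c'_{w,v}$ via the nil-Hecke formalism.

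For part (i), the key steps in order: (1) express the equivariant multiplicity $e_{vB}X(w)$ purely in terms of $c_{w,v}$ using the localization isomorphism $\imath^*\colon \CH_T^*(G/B)\xrightarrow{\sim}\nu(\NH^*)$ of Theorem~\ref{thm:equiv_cohomology} and the dual-basis description $\xi_w(\de_v)=d_{w,v}$, together with the relation $D^{\mathrm t}=C^{-1}$ from Proposition~\ref{prop:transistion_coefficients}; (2) identify the weights of the tangent space $T_{vB}(G/B)$ (namely $\{v(\al)\colon \al\in\RS^-\}$) and observe that $X(w)$ contains the $T$-curves through $vB$ corresponding exactly to the roots $\be$ with $s_\be v\le w$, i.e.\ to $S(w,v)$, so that its tangent cone at $vB$ has $|S(w,v)|$ "coordinate directions"; (3) invoke the smoothness$\Leftrightarrow$equivariant-multiplicity-one criterion (a point is smooth iff $e_{vB}X(w)=1$, equivalently iff the tangent cone is linear), which via step (1) translates into $c'_{w,v}=1$. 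Here I would cite \cite[§2]{Ku96} for the precise form of the equivariant multiplicity in terms of $c_{w,v}$; that computation is the one routine-but-essential verification I would not reproduce in full.

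For part (ii), rational smoothness of $vB$ in $X(w)$ is equivalent (by Deodhar/Carrell–Peterson type results, or directly from the Poincaré-polynomial characterization) to the Kazhdan–Lusztig polynomials $P_{y,w}$ being trivial for all $y$ with $v\le y\le w$, and this in turn is captured by positivity of the $c'_{w,y}$: the point $vB$ is rationally smooth iff the local intersection cohomology is that of a manifold at every fixed point $yB$ above it, which holds iff each $c'_{w,y}$ is a positive integer. The steps: (1) reduce to showing $vB$ rationally smooth $\iff$ $X(w)$ is rationally smooth along the whole orbit closure $\overline{ByB/B}$ for each $y\ge v$, hence $\iff$ every $yB$ with $v\le y\le w$ is rationally smooth; (2) for a single fixed point $yB$, rational smoothness is equivalent to the equivariant multiplicity $e_{yB}X(w)$ being the reciprocal of a product of exactly $\ell(w)$ linear forms with positive-integer numerator $1$, which in our notation is $c'_{w,y}$ being a positive integer; (3) conclude. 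I would reference \cite{Ku96} (building on Deodhar and Kazhdan–Lusztig) for step (2).

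\textbf{Main obstacle.} The genuine difficulty is step (1)/(2) of part (i): pinning down the exact identity between the purely algebraic coefficient $c'_{w,v}$ and the geometric equivariant multiplicity $e_{vB}X(w)$, and then invoking the smoothness criterion in the correct normalization (keeping track of the sign $(-1)^{\ell(w)-\ell(v)}$ and the fact that $X(w)$ has dimension $\ell(w)$ so the "smooth" equivariant multiplicity is $\prod_{\al\in\RS^+\cap w\RS^-}\al^{-1}$, which forces $c'_{w,w}=1$). Once that dictionary is in place, both (i) and (ii) follow from standard torus-localization smoothness and rational-smoothness criteria applied fixed-point by fixed-point; the rest is bookkeeping with the transition matrices $C,D$ already established in Proposition~\ref{prop:transistion_coefficients}. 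Since the paper positions this as an exposition of \cite{Ku96}, I would state the equivariant-multiplicity identity, give the short argument that it implies the stated criterion, and refer to \cite{Ku96} for the detailed verification of that identity.
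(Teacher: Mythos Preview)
The paper does not actually prove this theorem: it simply states Kumar's criterion and attributes both parts to \cite{Ku96}, with no proof given in the text (the only follow-up is the one-line remark that if $c'_{w,y}$ is constant then it is a positive integer, and the corollary about partial flags). Your proposal therefore goes well beyond what the paper supplies, and your outline---identifying $c'_{w,v}$ with the equivariant multiplicity $e_{vB}X(w)$ via the localization isomorphism of Theorem~\ref{thm:equiv_cohomology}, then invoking the smoothness $\Leftrightarrow$ equivariant-multiplicity-one criterion for (i), and the Carrell--Peterson/Deodhar characterization for (ii)---is indeed the shape of Kumar's original argument in \cite{Ku96}. So there is no discrepancy in approach to flag: the paper defers entirely to \cite{Ku96}, and your sketch is a faithful summary of that reference.

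One small caution on part (ii): the characterization you state (``rationally smooth at $vB$ iff $c'_{w,y}$ is a positive integer for a \emph{single} $y$'') is not quite what Kumar proves; as you correctly write in the reduction step, one needs the condition at \emph{every} $y$ in the interval $[v,w]$, and the equivalence at a single fixed point is with $c'_{w,y}$ being constant (not a priori a positive integer---that positivity is a separate fact). Your step (2) conflates these slightly, but the overall structure is right.
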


We remark that if $c'_{w,y}$ is a constant, then it can be shown that it is in fact a positive integer.  Since the singular locus of $X(w)$ is closed, to check for global smoothness of $X(w)$, it suffices to check smoothness only at the point $eB$ (note the same is not true for rational smoothness).

Kumar's criterion readily extends to Schubert varieties of partial flags.  Let $J\subseteq \mathcal{R}$ denote a subset of simple generators and let $W_J\subseteq W$ denote the Weyl subgroup generated by $J$.  For any $v\in W^J$, let
\[X^J(v):=\overline{BvP/P}\]
denote the corresponding Schubert variety.  The preimage of the map $\pi\colon G/B\to G/P_J$ restricted to the $X^J(v)$ is
\[\pi^{-1}(X^J(v))=X(vu_0)\] where $u_0$ denotes the maximal element of $W_J$.  Furthermore the projection $\pi$ is a $P_J/B$-fiber bundle on $G/B$ and hence, restricts to $P_J/B$-fiber bundle on $X(vu_0)$ with base $X^J(v)$.  Since the fiber $P_J/B$ is smooth, we get the following:

\begin{cor}
Let $v\in W^J$.  The following are equivalent:
\begin{enumerate}
\item[(i)] The Schubert variety $X^J(v)$ is smooth.
\item[(ii)] The Schubert variety $X(vu_0)$ is smooth.
\item[(iii)] The coefficient $c'_{vu_0,e}=1$.
\end{enumerate}
\end{cor}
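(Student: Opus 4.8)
The plan is to read off all three equivalences from Theorem~\ref{thm:smoothness} together with the fiber bundle structure of $\pi$ recalled just before the statement, so the argument is short. I would first treat (ii) $\Leftrightarrow$ (iii). Apply Theorem~\ref{thm:smoothness}(i) to the Schubert variety $X(vu_0)\subseteq G/B$ at the $T$-fixed point $eB$: this says that $eB$ is a smooth point of $X(vu_0)$ if and only if $c'_{vu_0,e}=1$. It then suffices to observe that $X(vu_0)$ is smooth everywhere precisely when it is smooth at $eB$, which is the remark following Theorem~\ref{thm:smoothness}: the singular locus of a Schubert variety is closed and $B$-stable, hence a union of Schubert cells, so if it is nonempty it contains the unique closed cell $\{eB\}$. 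Combining these gives (ii) $\Leftrightarrow$ (iii).

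Next I would prove (i) $\Leftrightarrow$ (ii) using that $\pi^{-1}(X^J(v))=X(vu_0)$ and that $\pi$ restricts to a Zariski-locally trivial fiber bundle $X(vu_0)\to X^J(v)$ whose fiber is the smooth projective variety $P_J/B$. Since smoothness is a local condition, every point of $X(vu_0)$ has an open neighborhood isomorphic to $U\times(P_J/B)$ for some open $U\subseteq X^J(v)$, and such a product is smooth if and only if $U$ is; equivalently, the projection $X(vu_0)\to X^J(v)$ is surjective and smooth with smooth fibers, so smoothness of the total space and of the base are equivalent. Hence $X(vu_0)$ is smooth if and only if $X^J(v)$ is, which is (i) $\Leftrightarrow$ (ii), and chaining with the previous paragraph finishes the proof.

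The only step needing care is the claim in the second paragraph that the restriction of $\pi$ to $X(vu_0)$ is Zariski-locally (not merely \'etale-locally) trivial, so that the product description is available directly. This holds because $G\to G/P_J$ is a Zariski-locally trivial $P_J$-bundle --- one trivializes over the big Bruhat cell and its Weyl translates --- and $X(vu_0)=\pi^{-1}(X^J(v))$ is saturated for $\pi$, so those trivializations restrict to $X(vu_0)$. If one wishes to bypass this point, one can instead invoke the standard fact that for a flat local homomorphism of Noetherian local rings with regular source and regular closed fiber the target is regular, applied to the smooth surjection $X(vu_0)\to X^J(v)$; since the fiber bundle statement is already recorded in the text, however, quoting it directly is the cleanest route.
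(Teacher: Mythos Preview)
Your proof is correct and follows essentially the same approach as the paper: the text immediately preceding the corollary records the $P_J/B$-fiber bundle structure of $\pi$ restricted to $X(vu_0)\to X^J(v)$ with smooth fiber, and the remark after Theorem~\ref{thm:smoothness} gives that global smoothness of $X(vu_0)$ is detected at $eB$; the corollary is then stated without further proof as a direct consequence. Your write-up simply makes these two steps explicit, with some additional care about Zariski-local triviality that the paper takes for granted.
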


\begin{ex}\label{ex:smooth}
Consider the root system of type $\tA_3$ and let $w=s_2s_1s_3s_2$. It is well known that the Schubert variety $X(w)$ has a singular point at $\{eB\}$.  We verify this fact using Theorem \ref{thm:smoothness}.  First, we calculate the coefficient $c_{w,e}$ using Proposition \ref{prop:transistion_coefficients}.  There are two subsequences of the reduced word $(s_2,s_1,s_3,s_2)$ corresponding to the identity $e$: the subsequence $(s_2,s_2)$ and the empty sequence.   Equation \eqref{eqn:c_wv_coeficients} from Proposition \ref{prop:transistion_coefficients} implies

\begin{align*}
c_{w,e}&=\frac{1}{\al_2\,s_2(\al_1)\,s_2(\al_3)\,s_2(\al_2)}+\frac{1}{\al_2\,\al_1\,\al_3\,\al_2}=\frac{-\al_1\al_3+s_2(\al_1)s_2(\al_3)}{\al_2^2\,\al_1\,\al_3\,s_2(\al_1)\,s_2(\al_3)}\\
&=\frac{\al_1+\al_2+\al_3}{\al_1\,\al_2\,\al_3\,s_2(\al_1)\,s_2(\al_3)}
\end{align*}
The set $S(w,e)=\{\al_1, \al_2, \al_3, s_2(\al_1), s_2(\al_3)\}$ and hence
\[c'_{w,e}=\al_1+\al_2+\al_3.\]  Theorem \ref{thm:smoothness} implies that $eB$ is a singular point of $X(w)$.  A similar calculation confirms that $c'_{w,v}=1$ for all $e< v\leq w.$  For examples in rank 2 of Kumar's smoothness criterion see \cite[Lemma 6.2]{Ku96}.
\end{ex}


\section{Equivariant connective K-theory of the flag variety}\label{sec:eqkth}

In this section we demonstrate how the results of previous sections can be extended to the context of the so-called connective $K$-theory.
Our main sources here are the paper \cite{KK90} by Kostant-Kumar on the $K$-theory, papers \cite{CZZ,CZZ1,CZZ2} on generalized oriented cohomology theories and \cite{HM20,KM19} on the equivariant connective $K$-theory which we mostly follow.

\subsection{\it Algebraic oriented cohomology and formal group laws} We first turn to generalized cohomology theories and formal group laws.

Let $\hh$ be a graded algebraic {\em oriented cohomology} theory (AOCT)  introduced by Levine and Morel in~\cite{LM}, i.e.~$\hh$ is a contravariant functor from the category of smooth quasi-projective varieties over a field $\kk$ of characteristic $0$ to the category of graded commutative rings which satisfies the axioms of \cite[Def.1.1.2]{LM}.
The word `oriented' here reflects the fact that there is a 1-1 correspondence between the Euler structures (characteristic classes) on $\hh$:
\begin{itemize}
\item
To each line bundle $\Ll$ over a variety $X$ one assigns the characteristic (Euler) class $c_1^{\hh}(\Ll):=s^*s_*(1_X) \in \hh^1(X)$, where $s$ is the zero-section of $\Ll$ and $s^*$ (resp. $s_*$) is the induced pull-back (resp. push-forward).
\end{itemize}
and the choices of a local parameter $x^{\hh}$ (orientations):
\begin{itemize}
\item
Choose $x^{\hh}=c_1^{\hh}(\mathcal{O}(1)) \in \hh^1(\mathbb{P}^\infty)=R[[x]]^{(1)}$, where $R=\hh(pt)$ is the graded coefficient ring. Observe that $x^{\hh}=a_0x+a_1x^2+\ldots$, where $a_0$ is invertible in $R$ and we assign $\deg a_i:=-i$.
\end{itemize}

\begin{ex}
Basic examples of AOCTs are
\begin{itemize}
\item[(i)]
The Chow theory (modulo rational equivalence)
$\hh(X)=\CH(X)$ over $R=\zz$ \cite[Ex.1.1.4]{LM}.
\item[(ii)]
The graded $K$-theory $\hh(X)=\K(X)[t^{\pm 1}]$ over the Lorentz polynomials $R=\zz[t^{\pm 1}]$ (here we assign $\deg t:=-1$ and $\deg x:=0$ for all $x\in \K(X)$) \cite[Ex.1.1.5]{LM}.
\item[(iii)]
The (graded) connective K-theory $\hh(X)=\CK(X)$ of Cai~\cite{Ca08} over the polynomial ring $R=\zz[t]$.
\end{itemize}
\end{ex}

A key property of any such theory is the Quillen formula \cite[Lemma~1.1.3]{LM}: given two line bundles $\Ll_1$, $\Ll_2$ over $X$ one has
\[
c_1^{\hh}(\Ll_1\otimes \Ll_2)=F(c_1^{\hh}(\Ll_1),c_1^{\hh}(\Ll_2)),
\]
where $F\in R[[x,y]]$ is the associated {\em formal group law}  over the coefficient ring $R$.

Recall  that (e.g. see~\cite{La55}) a formal group law (FGL) $F$ over a commutative unital ring $R$ is a power series in two variables $F(x,y)\in R[[x,y]]$ which is commutative $F(x,y)=F(y,x)$, associative $F(x,F(y,z))=F(F(x,y),z)$ and has neutral element $F(x,0)=x$. By definition we have
\[
F(x,y)=x+y+\sum_{i,j>0} a_{ij}x^i y^j,\;\text{ for some coefficients }a_{ij}\in R.
\]
To respect the grading on $\hh$ we set $\deg a_{ij}=1-i-j$ for the coefficients $a_{ij}$ of $F$.

\begin{ex} For the Chow theory $F(x,y)=x+y$ ($a_{ij}=0$ for all $i,j$).

For both the graded $K$-theory and the connective $K$-theory it is
\[F(x,y)=x+y-txy\] with $\deg t=-1$ ($a_{11}=-t$ and $a_{ij}=0$ for all $i+j>2$).
\end{ex}

The quotient of the polynomial ring in the variables $a_{ij}$s modulo relations imposed by the commutativity and associativity defines the Lazard ring $\LL$.
The respective FGL over $\LL$ denoted $F_U$ is the universal FGL in the following sense: for any FGL $F$ over $R$ there is a map $\LL \to R$ given by evaluating the coefficients $a_{ij}$ of $F_U$.

By the main result of \cite{LM} there is an algebraic oriented cohomology theory $\AC(-)$
called algebraic cobordism which has the associated formal group law $F_U$ over $\LL=\AC(pt)$.
Given a FGL~$F$ over $R$ one may define an AOCT called a free theory by a simple base change of the coefficient rings:
\[
\hh(-):=\AC(-)\otimes_{\LL} R
\]
(here $\LL \to R$ is the universal map introduced above).

\begin{ex}
The Chow theory, the graded $K$-theory and the connective $K$-theory are free theories (see \cite{LM}).
Moreover, the connective $K$-theory is the universal theory for both Chow theory and the $K$-theory, i.e., there is a diagram
of base change maps:
\[
\xymatrix{
& \CK(-) \ar[dl]_{t\mapsto 0} \ar[dr]^{\zz[t]\hra \zz[t^{\pm 1}]} & \\
\CH(-) & & \K(-)[t^{\pm 1}]
}
\]
\end{ex}


\subsection{\it Algebraic connective $K$-theory}
We now recall the geometric definition and basic properties of the connective $K$-theory $\hh(-)=\CK(-)$ following \cite{HM20}.

Given a smooth algebraic variety $X$, for every integer $i$ we denote by $\MM_i(X)$
the abelian category of coherent $\OO_X$-modules with dimension of support at most $i$. So
$\MM_i(X)=0$ if $i<0$
and $\MM_i(X)=\MM(X)$ if $i\ge \dim X$. We then set the connective $K$-groups of $X$ to be
\[
\CK_i(X):=\image\big(\K(\MM_i(X))\to \K(\MM_{i+1}(X))\big),
\]
where $\K(\MM_i(X))$ denotes the Grothendieck group of the respective coherent $\OO_X$-modules.

These groups are related to the Chow groups via the exact sequence
\[
\CK_{i-1}(X) \xra{t} \CK_i(X) \to \CH_i(X) \to 0,
\]
where $t$ is the Bott homomorphism.
We can view the graded group $\CK(X)$ as a module over the polynomial ring $\zz[t]$ and there is an isomorphism of oriented theories
\[
\CK(-)/t\CK(-)\simeq \CH(-).
\]
As for the $K$-theory, there is an isomorphism of oriented theories
\[
\CK(-)/(t-1)\CK(-)\simeq \K(-).
\]

Recall that $\K(\MM(X))=\K(X)$, and there is a topological filtration on $\K(X)$ defined by
\[
\K(X)_{(i)}:=\image\big(\K(\MM_i(X))\to \K(X)\big).
\]
The relation between the groups $\CK_i(X)$, $\CH_i(X)$ and $\K(X)_{(i)}$ is given by the following commutative diagram of surjective maps
\[
\xymatrix{
\CK_i(X) \ar@{>>}[r] \ar@{>>}[d] & \K(X)_{(i)} \ar@{>>}[d] \\
\CH_i(X) \ar@{>>}[r] & \K(X)_{(i)}/\K(X)_{(i-1)}.
}
\]


\subsection{\it Connective group rings}
Let $\cl$ be a finitely generated free abelian group. Let $R=\zz[t]$ be the graded polynomial ring in the variable $t$ with $\deg t=-1$.
Consider the polynomial ring $R[x_\la\colon \la\in \cl]$ in variables $x_\la$
indexed by all non-zero elements of $\cl$ (we assume $x_0=0$). It has the induced grading where $\deg x_\la=1$.

Let $I$ be the ideal generated by all elements
\[
x_{\al+\be}-x_\al-x_\be+t x_\al x_\be,\quad \al,\be\in \cl.
\]
The quotient
\[
\SP(\cl):=R[x_\la\colon \la\in \cl]/I
\]
is the graded ring which we call a {\em connective group ring}. Indeed, this is a particular case of a more general construction of a formal group ring of \cite[\S2]{CPZ}. Namely, the case corresponding to the formal group law $F$ of the connective $K$-theory. For simplicity we write $\SP$ instead of $\SP(\cl)$ having in mind the corresponding lattice.

\begin{rem}
If we set $t:=0$, then $\SP$ turns into the symmetric algebra $S=\Sym_R(\cl)$ of the previous sections (polynomial ring in a basis of $\cl$).
If we set $t=1$, then it turns into the usual group ring $R[\cl]$ of $\cl$ via $x_\la\mapsto 1-e^{-\la}$.
Hence, one may think of $\SP$ as a deformation between the polynomial ring $S$ and the usual group ring $R[\cl]$ of the lattice $\cl$.
\end{rem}

Following~\cite[\S5]{HML} given a free theory $\hh$ with the associated FGL $F$ over the coefficient ring $R=\hh(pt)$,
a split torus $T$ and a smooth $T$-variety $X$ over $\kk$, one may define the associated $T$-equivariant oriented cohomology $\hh_T(X)$
as the limit of usual oriented cohomology taken over a certain system of $T$-representations (the so-called Borel construction).
We refer to \cite{To,EG,Br97} for the definition and properties of the equivariant Chow theory and equivariant $K$-theory;
and to \cite{KM19} for the equivariant connective $K$-theory.
All such $T$-equivariant theories satisfy the axioms of \cite[\S2]{CZZ2} and, moreover,
the  $T$-equivariant coefficient ring $\CK_T(pt)$ of the connective $K$-theory is isomorphic to the connective group ring
$\SP(T^*)$ of the group of characters $T^*$ of $T$:
\[
\SP(T^*)\simeq \CK_T(pt).
\]
Under this identification $x_\la$ corresponds to the first characteristic class $c_1^{\CK}(\Ll_\la)$
of the associated $T$-equivariant line bundle $\Ll_\la$ over $pt$.


We now extend most of the constructions and results of previous sections to the context of the connective $K$-theory and connective group rings following results of \cite{CZZ,CZZ1,CZZ2}.

\subsection{\it The connective Hecke ring}
As before let $G$ be a split semisimple linear algebraic group $G$ over a field $\kk$ of characteristic $0$ together with a a split maximal torus $T$ and a Borel subgroup $B$ containing $T$. Consider the associated finite crystallographic (semisimple) root datum $\RS \hra \cl^\vee$ together with its subset of simple roots $\SR$ and the character lattice $\cl:=T^*$. The Weyl group $W$ of $\RS$ acts on $T^*$ and, hence, on the connective group ring $\SP=\SP(\cl)$. It is well-known that this action coincides with the natural action of $W$ on the coefficient ring $\CK_T(pt)$.

Similarly to the definition of the twisted group algebra for $S$
we define the twisted group algebra for $\SP$ to be the free left $\SP$-module \[\SP_W:=\SP \otimes_R R[W]\] so that each element of $\mathcal{S}_W$ can be written uniquely as a linear combination $\sum_{w\in W} p_w\de_w$, where $\{\de_w\}_{w\in W}$ is the standard basis of the group ring $R[W]$ and $p_w\in \SP$.
It has the product induced  by that of $R[W]$ and the twisted commuting relation $w(p)\de_w =\de_w p$,  $p\in \SP$.

\begin{dfn}
We consider the localization $\mathcal{Q}:=\SP(\tfrac{1}{x_\al}, \;\al\in \RS)$ and define the $R$-subalgebra $\DF$ of
$\mathcal{Q}_W$ to be generated by the divided-difference elements corresponding to simple reflections
\[X_i:=\tfrac{1}{x_{\al_i}}(\de_e-\de_{s_i}),\quad i=1\ldots n,\] and multiplications by elements from $\SP$. We call it the {\em connective Hecke ring}.
\end{dfn}

The connective Hecke ring satisfies the same braid relations as the nil-Hecke ring (see e.g. \cite{BE90}).
But instead of the nilpotent relation it satisfies the idempotent-type relation
\begin{align*}
X_i^2 &=\big(\tfrac{1}{x_{\al_i}}-\tfrac{1}{x_{\al_i}}\de_{s_i}\big)\big(\tfrac{1}{x_{\al_i}}-\tfrac{1}{x_{\al_i}}\de_{s_i}\big)\\
 &=\big(\tfrac{1}{x_{-\al_i}^2}+\tfrac{1}{x_{\al_i}x_{-\al_i}}\big)-\big(\tfrac{1}{x_{\al_i}^2}+\tfrac{1}{x_{\al_i}x_{-\al_i}}\big)\de_i\\
 &=\big(\tfrac{1}{x_{\al_i}}+\tfrac{1}{x_{-\al_i}})X_i.
\end{align*}
Since in the connective group ring we have
\[
0=x_{\al_i+(-\al_i)}=x_{\al_i}+x_{-\al_i}-tx_{\al_i}x_{-\al_i},
\]
we obtain
\[
\tfrac{1}{x_{\al_i}}+\tfrac{1}{x_{-\al_i}}=t \in \mathcal{Q}.
\]
So the idempotent relation then becomes
\[
X_i^2=tX_i.
\]
As for the affine relation (Aff), we get
\begin{align*}
X_i x_\la &=\tfrac{1}{x_{\al_i}}(\de_e-\de_{s_i})x_\la=\tfrac{x_\la}{x_{\al_i}}\de_e-\tfrac{s_i(x_\la)}{x_{\al_i}}\de_{\al_i}\\
 &= s_i(x_\la)\tfrac{1}{x_{\al_i}}(\de_e-\de_{\al_i})+\tfrac{x_\la}{x_{\al_i}}-\tfrac{s_i(x_\la)}{x_{\al_i}}\\
  &=s_i(x_\la) X_i+X_i\circ x_\la,
\end{align*}
where $X_i\circ x_\la:=\tfrac{x_\la-s_i(x_\la)}{x_{\al_i}}$.

Set $k:=\al_i^\vee(\la)$. We then obtain
\begin{align*}
s_i(x_\la) &= x_{s_i(\la)}=x_{\la- \al_i^\vee(\la)\al_i}\\
 &= x_\la + x_{-k\al_i} - tx_\la x_{-k\al_i}\\
 &=x_\la -x_{-k\al_i}(tx_\la-1).
\end{align*}
Therefore, in $\SP$ we get
\[
X_i\circ x_\la=\tfrac{x_{-k\al_i}}{x_{\al_i}}(tx_\la -1).
\]
Observe that in the connective group ring $\SP$ for any integer $k$ and $\be\in \cl$ we have
\[
1-t x_{k\be}=(1-tx_\be)^k.
\]
This allows to compute the quotient $\tfrac{x_{-k\al_i}}{x_{\al_i}}$ which will be again in $\SP$.
\begin{ex}
If $t=1$ (the usual $K$-theory case), then in the group ring $\zz[\cl]$ using the exponential notation ($x_\la\mapsto 1-e^{-\la}$) we get
\[
\tfrac{x_{-k\al_i}}{x_{\al_i}}=\tfrac{1-e^{k\al_i}}{1-e^{-\al_i}}=-e^{\al_i}\tfrac{1-e^{k\al_i}}{1-e^{\al_i}},
\]
which gives \[
X_i\circ x_\la=\tfrac{1-e^{k\al_i}}{1-e^{\al_i}}e^{\al_i-\la} .
\]
\end{ex}

\begin{rem}
The algebra $\DF$ can be viewed as a generalization of the nil-Hecke ring $\NH$ (as the latter can be obtained by setting $t:=0$ in $\SP$).
Moreover, if $t=1$, then it gives the classical 0-affine Hecke algebra.
\end{rem}

\subsection{\it The coproduct}
Following arguments of section~\ref{sec:coprod} (see \cite{CZZ}) one constructs the cocommutative coproduct $\DI$ on $\DF$ so that $\DI$ turns the $\SP$-linear dual $\DF^*:=\Hom_{\SP}(\DF,\SP)$ into a commutative $\SP$-algebra.

Given $w\in W$ and its reduced expression
$w=s_{i_1}s_{i_2}\ldots s_{i_r}$
we set \[X_w:=X_{i_1}\ldots X_{i_r}\in \mathcal{Q}_W.\]
We then obtain the following formulas (cf Theorem~\ref{thm:coproduct_recursion}) for the coproduct $\DI(X_w)$ in terms of $X_i$s:
\begin{thm}
For any $i\leq n$, we have
\[
\DI(X_i)=X_i\otimes 1+1\otimes X_i -x_{\al_i} X_i\otimes X_i,
\]
For any $w\in W$ and $s_i\in D_{L}(w)$,  if we write
\[
\DI(X_w)=\sum_{v,u\in W} p_{u,v}^w \ X_u\otimes X_v,
\]
then the coefficients satisfy the recursion
\[
p_{u,v}^w=\begin{cases}
X_i\circ(p_{u,v}^{s_iw})  & \text{if $s_i\notin D_L(u)\cup D_L(v)$}\\
X_i\circ(p_{u,v}^{s_iw}) +s_i(p_{s_iu,v}^{s_iw}+tp_{u,v}^{s_iw})&\text{if $s_i\in D_L(u)\setminus D_L(v)$}\\
X_i\circ(p_{u,v}^{s_iw}) +s_i(p_{u,s_iv}^{s_iw}+tp_{u,v}^{s_iw})&\text{if $s_i\in D_L(v)\setminus D_L(u)$}\\
X_i\circ(p_{u,v}^{s_iw}) +s_i(p_{s_iu,v}^{s_iw}+p_{u,s_iv}^{s_iw}+2tp_{u,v}^{s_iw}) & \\
\qquad\qquad -x_{\al_i}s_i(p_{s_iu,s_iv}^{s_iw}+t^2p_{u,v}^{s_iw})
&\text{if $s_i\in D_L(u)\cap D_L(v)$}
\end{cases}
\]
with $p_{e,e}^e=1$.
\end{thm}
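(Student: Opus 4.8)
The plan is to mirror, step by step, the proof of Theorem~\ref{thm:coproduct_recursion}. As in Section~\ref{sec:coprod}, the assignment $q\de_w\mapsto q\de_w\otimes\de_w$ defines a ring homomorphism $\DI\colon\mathcal{Q}_W\to\mathcal{Q}_W\otimes_{\mathcal{Q}}\mathcal{Q}_W$ for the $\odot$-product, and since $\{X_w\}_{w\in W}$ is a $\mathcal{Q}$-basis of $\mathcal{Q}_W$ it suffices to prove that, writing $\DI(X_w)=\sum_{u,v}p^w_{u,v}\,X_u\otimes X_v$, every coefficient $p^w_{u,v}$ lies in $\SP$ and obeys the stated recursion; this simultaneously shows $\DI(\DF)\subseteq\DF\otimes_{\SP}\DF$. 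I would first dispose of the base case $w=s_i$ by the direct computation $\DI(X_i)=\DI\bigl(\tfrac{1}{x_{\al_i}}(\de_e-\de_{s_i})\bigr)=\tfrac{1}{x_{\al_i}}(\de_e\otimes\de_e-\de_{s_i}\otimes\de_{s_i})$, and then rewriting the right-hand side over $\mathcal{Q}$ exactly as in the proof of Theorem~\ref{thm:coproduct_recursion} (only the sign convention $X_i=-Y_i$ changes): this yields $\DI(X_i)=X_i\otimes 1+1\otimes X_i-x_{\al_i}X_i\otimes X_i$, in particular with coefficients in $\SP$, and also establishes the first displayed formula.

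For the inductive step write $w=s_iw'$ with $s_i\in D_L(w)$, so $X_w=X_iX_{w'}$ and $\DI(X_w)=\DI(X_i)\odot\DI(X_{w'})$. Assuming $\DI(X_{w'})=\sum_{u,v}p^{w'}_{u,v}X_u\otimes X_v$ with all $p^{w'}_{u,v}\in\SP$, and using the identity $\de_{s_i}=\de_e-x_{\al_i}X_i$ (immediate from $x_{\al_i}X_i=\de_e-\de_{s_i}$), the diagonal form of $\DI(X_i)$ gives
\[
\DI(X_w)=\sum_{u,v}\tfrac{1}{x_{\al_i}}\Bigl(p^{w'}_{u,v}\,X_u\otimes X_v-s_i(p^{w'}_{u,v})\,(\de_{s_i}X_u)\otimes(\de_{s_i}X_v)\Bigr).
\]
Substituting $\de_{s_i}X_u=X_u-x_{\al_i}X_iX_u$, expanding the tensor product over $\mathcal{Q}$ into four terms, and simplifying with $X_i\circ p=\tfrac{p-s_i(p)}{x_{\al_i}}$ rewrites the $(u,v)$-summand as
\[
(X_i\circ p^{w'}_{u,v})\,X_u\otimes X_v+s_i(p^{w'}_{u,v})\bigl(X_iX_u\otimes X_v+X_u\otimes X_iX_v\bigr)-x_{\al_i}\,s_i(p^{w'}_{u,v})\,X_iX_u\otimes X_iX_v.
\]
Because $X_i\circ(-)$, $s_i(-)$ and multiplication by elements of $\SP$ preserve $\SP$, this proves $\DI(X_w)\in\DF\otimes_{\SP}\DF$ by induction, hence that $\DF^*:=\Hom_{\SP}(\DF,\SP)$ is a commutative $\SP$-algebra.

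To read off the recursion I would collect the coefficient of a fixed basis element $X_a\otimes X_b$ on both sides. The one structural input that differs from the nil-Hecke situation is the rule for $X_iX_u$: since the connective relation is $X_i^2=tX_i$ (not $Y_i^2=0$), one has $X_iX_u=X_{s_iu}$ when $s_i\notin D_L(u)$ but $X_iX_u=tX_u$ when $s_i\in D_L(u)$; equivalently $\de_{s_i}X_u=(1-tx_{\al_i})X_u$ in the latter case. Splitting into the four cases determined by whether $s_i$ lies in $D_L(u)$ and in $D_L(v)$, and keeping track of which of the at most four source pairs $(u,v)$, $(s_iu,v)$, $(u,s_iv)$, $(s_iu,s_iv)$ can map onto $X_a\otimes X_b$ under these rules, produces the stated four-case recursion, with initial value $p^e_{e,e}=1$.

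The delicate point — and the place where the calculation genuinely departs from Theorem~\ref{thm:coproduct_recursion} — is the last case $s_i\in D_L(u)\cap D_L(v)$. There all four source pairs contribute, and in each of $X_iX_u\otimes X_v$, $X_u\otimes X_iX_v$, $X_iX_u\otimes X_iX_v$ each of the two factors may independently be a length-raising term $X_{s_i\cdot}$ or a scalar term $t(-)$, so one must carefully assemble the $t$-weighted and $x_{\al_i}$-weighted pieces coming from all sources (and expand $(1-tx_{\al_i})$ and $(1-tx_{\al_i})^2$) before collecting; this bookkeeping is the main obstacle, whereas the other three cases are straightforward analogues of the nil-Hecke computation.
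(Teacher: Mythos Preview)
Your proposal is correct and follows essentially the same route as the paper's proof: expand $\DI(X_w)=\DI(X_i)\odot\DI(X_{w'})$ via $\tfrac{1}{x_{\al_i}}(\de_e\otimes\de_e-\de_{s_i}\otimes\de_{s_i})$, arrive at the four-term expression $(X_i\circ p)\,X_u\otimes X_v + s_i(p)(X_iX_u\otimes X_v + X_u\otimes X_iX_v) - x_{\al_i}s_i(p)\,X_iX_u\otimes X_iX_v$, and then read off the recursion using the dichotomy $X_iX_z=X_{s_iz}$ versus $X_iX_z=tX_z$ according to whether $s_i\notin D_L(z)$ or $s_i\in D_L(z)$. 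The paper stops at exactly this point with ``the recursive formulas then follow''; your additional paragraph on the bookkeeping in the fourth case (tracking the four possible source pairs and the resulting $t$- and $x_{\al_i}$-weights) is more explicit than what the paper writes but is the same argument.
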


\begin{proof} We only prove the formulas for the coefficients.
As in the cohomology case consider $w\in W$ with $\ell(w)\geq 2$, and let $w=s_iw'$ from some $s_i\in D_L(w)$.
Write
\[
\DI(X_{w'})=\sum_{u,v} p_{u,v}^{w'}\ X_u\otimes X_v
\]
and assume for the sake of induction that $\DI(X_{w'})\in \DF\otimes_{\SP} \DF$, or equivalently, each $p_{u,v}^{w'}\in \SP$.
Then
\begin{align*}
\DI(X_w)&=\DI(X_i)\odot\DI(X_{w'})\\
&=\tfrac{1}{x_{\al_i}}(\de_e\otimes\de_e - \de_{s_i}\otimes\de_{s_i})\odot\big(\sum_{u,v} p_{u,v}^{w'}\ X_u\otimes X_v\big)\\
&= \sum_{u,v}\ \tfrac{1}{x_{\al_i}}\big( p_{u,v}^{w'} (X_u\otimes X_v) - s_i(p_{u,v}^{w'})\ (\de_{s_i} X_u\otimes\de_{s_i} X_v) \big)\\
&= \sum_{u,v}\ \tfrac{s_i(p_{u,v}^{w'})}{x_{\al_i}}\Big( -(\de_e-\de_{s_i})X_u\otimes(\de_e-\de_{s_i})X_v +(\de_e-\de_{s_i})X_u\otimes X_v\\
&\qquad\qquad + X_u\otimes (\de_e-\de_{s_i}) X_v  - X_u\otimes X_u\Big)+\tfrac{p_{u,v}^{w'}}{x_{\al_i}}(X_u\otimes X_u)\\
&= \sum_{u,v}\ -x_{\al_i}\cdot s_i(p_{u,v}^{w'})\ (X_iX_u\otimes X_iX_v) + s_i(p_{u,v}^{w'})\ (X_iX_u\otimes X_v + X_u\otimes X_iX_v)\\
&\qquad\qquad + X_i\circ p_{u,v}^{w'}\ (X_u\otimes X_v).
\end{align*}
Consider the product $X_iX_z$ in $\DF$.
Suppose that $s_i\notin D_L(z)$. Then $s_i z$ is reduced, and  $X_i X_{z}=X_{s_iz}$ in $\DF$. As in the cohomology case,  this corresponds to $s_i\in D_L(u)$, where $u=s_iz$.
Suppose that $s_i\in D_L(z)$. Then $z=s_iz'$ with $\ell(z')=\ell(z)-1$. So $X_iX_z=X_i^2 X_{z'}=tX_iX_{z'}=tX_z$ which corresponds to the case $s_i\in D_L(u)$, where $u=z$. The recursive formulas then follow.
\end{proof}

\begin{cor}
We have the following properties of the coefficients:
\begin{enumerate}
\item
If $p_{u,v}^w\neq 0$, then $u,v\leq w$ and
$p_{u,v}^w\in \II^k$, where $k=\ell(u)+\ell(v)-\ell(w)$.
\item
$\displaystyle p_{w,w}^w=(-1)^{\ell(w)}\prod_{\al\in \RS^+\cap w(\RS^-)} x_{\al}$ (i.e. the product of inversions of $w$).
\item
$p_{w,e}^w=p_{e,w}^w=1$.
\end{enumerate}
\end{cor}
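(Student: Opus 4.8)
The plan is to establish all three parts simultaneously by induction on $\ell(w)$, feeding on the recursion of the preceding theorem. The base is $p_{e,e}^{e}=1$ together with the rank-one formula $\DI(X_{s_i})=X_i\otimes 1+1\otimes X_i-x_{\al_i}X_i\otimes X_i$, which gives $p_{s_i,e}^{s_i}=p_{e,s_i}^{s_i}=1$, $p_{s_i,s_i}^{s_i}=-x_{\al_i}$ and all other length-one coefficients zero, so all three assertions hold there. For the inductive step I will use three elementary properties of the connective group ring $\SP$ and its augmentation ideal $\II=(x_\mu\colon\mu\in\cl)$: (a) each $s_i$ is a graded ring automorphism of $\SP$ fixing $\II$, so $s_i(\II^m)=\II^m$; (b) multiplication by $x_{\al_i}\in\II$ (or by any $x_\mu$) sends $\II^m$ into $\II^{m+1}$, while multiplication by $t\in R$ preserves $\II^m$; and (c) the divided-difference operator $X_i\circ q=\tfrac{q-s_i(q)}{x_{\al_i}}$ satisfies $X_i\circ(\II^m)\subseteq\II^{m-1}$ for $m\ge 1$ (with the convention $\II^{j}:=\SP$ for $j\le 0$). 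Property (c) follows by induction on $m$ from the twisted Leibniz rule $X_i\circ(pq)=(X_i\circ p)\,q+s_i(p)\,(X_i\circ q)$, whose purely formal proof carries over to $\SP$ verbatim, together with the value $X_i\circ x_\la=\tfrac{x_{-k\al_i}}{x_{\al_i}}(tx_\la-1)\in\SP$ (with $k=\al_i^\vee(\la)$) computed above: writing an element of $\II^{m}$ as a sum of terms $x_{\la_1}q$ with $q\in\II^{m-1}$ and applying Leibniz, the summand $(X_i\circ x_{\la_1})\,q$ lies in $\SP\cdot\II^{m-1}$ and $s_i(x_{\la_1})\,(X_i\circ q)$ lies in $\II\cdot\II^{m-2}$, both inside $\II^{m-1}$.

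For part (1) I would first settle the Bruhat support. In each of the four cases of the recursion, $p_{u,v}^{w}$ is a combination of coefficients $p_{u',v'}^{s_iw}$ with $(u',v')$ among $(u,v)$, $(s_iu,v)$, $(u,s_iv)$, $(s_iu,s_iv)$ (restricted to those allowed by the descent hypothesis of the case), and by induction these vanish unless $u',v'\le s_iw$. Since $s_iw<w$, one gets $v\le w$ (resp.\ $u\le w$) directly whenever $v'=v$ (resp.\ $u'=u$); when $u'=s_iu$, which occurs only when $s_i\in D_L(u)$, i.e.\ $s_i\notin D_L(s_iu)$, the lifting property of the Bruhat order promotes $s_iu\le s_iw<w$ with $s_i\in D_L(w)\setminus D_L(s_iu)$ to $u=s_i(s_iu)\le w$, and likewise for $v$. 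So $p_{u,v}^{w}\ne 0$ forces $u,v\le w$. For the $\II$-adic claim, set $k=\ell(u)+\ell(v)-\ell(w)$; by induction $p_{u,v}^{s_iw}\in\II^{k+1}$, $p_{s_iu,v}^{s_iw},p_{u,s_iv}^{s_iw}\in\II^{k}$ and $p_{s_iu,s_iv}^{s_iw}\in\II^{k-1}$, and applying (a)--(c) to the recursion---$X_i\circ$ drops the order by one, $s_i$ and $t$ preserve it, $x_{\al_i}\cdot$ raises it---every term on the right lands in $\II^{k}$, closing the induction. (More slickly: $\DI$ is a graded ring homomorphism and $X_w$, $X_u\otimes X_v$ are homogeneous of degrees $-\ell(w)$, $-\ell(u)-\ell(v)$ in $\DF\otimes_{\SP}\DF$, so $p_{u,v}^{w}$ is homogeneous of degree $k$; and for $k\ge 0$ any homogeneous degree-$k$ element of $\SP$ is a $\zz[t]$-combination of monomials $t^{a}x_{\mu_1}\cdots x_{\mu_b}$ with $b-a=k$, hence $b\ge k$, so it lies in $\II^{k}$.)

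Parts (2) and (3) then fall out of the recursion, because the Bruhat support from part (1) kills all but one term. Write $w=s_iw'$ with $s_i\in D_L(w)$, so $\ell(w')=\ell(w)-1$ and $w\not\le w'$. For $p_{w,e}^{w}$ we have $u=w$, $v=e$, hence $s_i\in D_L(u)\setminus D_L(v)$, and the recursion reads $p_{w,e}^{w}=X_i\circ(p_{w,e}^{w'})+s_i\big(p_{w',e}^{w'}+t\,p_{w,e}^{w'}\big)$; since $w\not\le w'$ we get $p_{w,e}^{w'}=0$, whence $p_{w,e}^{w}=s_i(p_{w',e}^{w'})=s_i(1)=1$ by induction, and $p_{e,w}^{w}=1$ by cocommutativity. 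For $p_{w,w}^{w}$ we are in the case $s_i\in D_L(u)\cap D_L(v)$; the coefficients $p_{w,w}^{w'}$, $p_{w',w}^{w'}$, $p_{w,w'}^{w'}$ all vanish because $w\not\le w'$, leaving $p_{w,w}^{w}=-x_{\al_i}\,s_i(p_{w',w'}^{w'})$. By induction $p_{w',w'}^{w'}=(-1)^{\ell(w')}\prod_{\al\in\RS^{+}\cap w'\RS^{-}}x_\al$, and the standard inversion-set identity $\RS^{+}\cap w\RS^{-}=\{\al_i\}\sqcup s_i\big(\RS^{+}\cap w'\RS^{-}\big)$---valid since $\ell(w)=\ell(w')+1$---together with $(-1)^{\ell(w')+1}=(-1)^{\ell(w)}$ yields $p_{w,w}^{w}=(-1)^{\ell(w)}\prod_{\al\in\RS^{+}\cap w\RS^{-}}x_\al$, as claimed.

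The main obstacle is the order bookkeeping in part (1). Unlike the Chow-theoretic case, where the coefficients are honest homogeneous polynomials in the simple roots and the degree count is immediate, here $X_i\circ$ does \emph{not} send $\II$ into $\II$ at the level of individual generators---the augmentation of $X_i\circ x_\la$ equals $\al_i^\vee(\la)$, which need not vanish---so the estimate $X_i\circ(\II^m)\subseteq\II^{m-1}$ must be proved for \emph{products} via the twisted Leibniz rule rather than read off directly. The alternative route through the grading of $\DF$ sidesteps this but still asks one to verify that $\DI$ is graded and that the divided-difference elements carry the stated degrees, which is the analogue of the structural input used in the cohomological case.
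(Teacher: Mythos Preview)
Your proof is correct and follows the approach the paper implicitly intends: the corollary is stated without proof, as a direct consequence of the preceding recursion theorem, and your induction on $\ell(w)$ via that recursion is exactly the expected argument. Your bookkeeping is accurate in all three parts---in particular the $\II$-adic estimate $X_i\circ(\II^m)\subseteq\II^{m-1}$ via the twisted Leibniz rule, and the inversion-set identity $\RS^{+}\cap w\RS^{-}=\{\al_i\}\sqcup s_i(\RS^{+}\cap w'\RS^{-})$ for part~(2)---and the alternative grading argument you sketch is a clean shortcut for part~(1) that the paper does not spell out.
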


\begin{ex} Similarly to Example~\ref{ex:rk2co} we then obtain for $w=s_2s_1$:
\begin{align*}
\DI(X_w) = x_{\al_2}s_1(x_{\al_1})X(w) &-s_2(x_{\al_1})X(w,s_1)-x_{\al_2}X(w,s_2)\\
&+X(w,e)-X_2\circ(x_{\al_1})X(s_1)+X(s_1,s_2).
\end{align*}
\end{ex}

Define the push-pull elements in $\mathcal{Q}$ via
\[
Y_i=\tfrac{1}{x_{-\al_i}}\de_e +\tfrac{1}{x_{\al_i}}\de_{s_i},\quad i=1\ldots n.
\]
Observe that for each $i$ we have
\[
X_i+Y_i=(\tfrac{1}{x_{\al_i}}+\tfrac{1}{x_{-\al_i}})\de_e =t\de_e.
\]
Then the formula for the coproduct in terms of $Y_i$s (cf Theorem~\ref{thm:coproduct_recursion}) will be:
\begin{align*}
\DI(Y_i) &=\DI(t -X_i)=t  - X_i\otimes 1-1\otimes X_i +x_{\al_i} X_i\otimes X_i\\
&= t   - (t -Y_i)\otimes 1 -1\otimes (t-Y_i)+x_{\al_i}(t-Y_i)\otimes (t-Y_i)\\
&= t^2x_{\al_i} +(1-tx_{\al_i})(Y_i\otimes 1 +1\otimes Y_i) +x_{\al_i} (Y_i\otimes Y_i).
\end{align*}
(here we identify $\de_e$ and $\de_e\otimes\de_e$ with $1$).

\subsection{\it The localization and the dual connective Hecke rings}
As in the cohomology case one may relate the dual of the connective Hecke ring to the equivariant connective $K$-theory of a flag variety.
We only briefly recall main results in this direction following \cite{CZZ,CZZ1,CZZ2} and \cite{KK90}.

One one side, by \cite[Theorem~10.7]{CZZ1} the natural inclusion $\SP_W \hra \DF$ induces a ring inclusion $\DF^* \hra \SP_W^*$ on the $\SP$-linear duals (called the moment map). On the other side by \cite[Theorem~8.2]{CZZ2} (Theorem~8.2 loc.cit.) the $T$-equivariant connective $K$-theory $\CK_T(G/B)$ can be identified with $\DF^*$ in such a way that the moment map coincides with the pull-back
\[
\imath^*\colon \DF^*=\CK_T(G/B) \hra \CK_T((G/B)^T)=\SP_W^*
\]
induced by the inclusion $\imath \colon (G/B)^T \hra G/B$.
Moreover, if $\II$ denotes the kernel of the augmentation map $\veps\colon \SP \to R$, $x_\la\mapsto 0$,
then there is the induced quotient map (here $\DF^*$ is an $\SP$-module)
\[
\veps^*\colon \DF^* \to \DF^*/\II\DF^*=\veps \DF^*,
\]
where the target can be identified with the usual connective $K$-theory $\CK(G/B)$ and the map $\veps^*$ coincides with the forgetful map
\[
\eta\colon \CK_T(G/B)\simeq \DF^*\to \veps \DF^* \simeq \CK(G/B).
\]
All these results can be expended to the parabolic situation in which case we obtain the forgetful map
$\CK_T(G/P_J) \to \CK(G/P_J)$.

By \cite[Prop.~7.7]{CZZ} the set $\{Y_{w}\}_{w}$ forms an $\SP$-basis of the algebra $\DF$.
We introduce coefficients $c_{w,v}$ via
\[
Y_w=\sum_v c_{w,v} \de_v,\quad c_{w,v}\in \mathcal{Q}.
\]
Let $\{\psi_w\}_{w\in W}$ and  $\{\xi_w\}_{w\in W}$ denote the Kronecker dual bases to
$\{\de_w\}_{w\in W}$ and $\{Y_w\}_{w\in W}$ respectively. We introduce the coefficients $d_{w,v}\in \mathcal{Q}$ as
\[
\xi_w=\sum_{v\in W}d_{w,v} \psi_v, \quad d_{w,v}.
\]
We then obtain the transformation matrices $C=[c_{w,v}]$, $D=[d_{w,v}]$ such that $C^{-1}=D^\mathrm{t}$.

As in the cohomology case, for each Schubert variety, we define the $T$-equivariant Schubert class
\[
\sigma_w:=[\overline{BwB/B}]\in \CK_T^*(G/B).
\]
The Schubert classes $\{\sigma_w\}_{w\in W}$ form a basis of $\DF^*=\CK_T^*(G/B)$ as a $\SP$-module, and
its evaluations $\{\veps^*\sigma_w\}_{w\in W}$ form a basis of $\CK(G/B)$ (see e.g. \cite[\S11]{CZZ}).
The main result of \cite{CZZ1} says that the bases $\{\xi_w\}_w$ and $\{\sigma_w\}_w$ are Poincar\'e-dual to each other.
Observe that
in the connective case the basis $\{\xi_w\}$ does not coincide with the Schubert basis. Indeed, when restricted to the usual $K$-theory it gives the so called ideal-sheaf basis.

\section{Calculations in SageMath}\label{sec:sagemath}

In this section, we present code written in SageMath (Python) for computing Schubert structure constants $p_{u,v}^w$ as defined in Theorem~\ref{thm:coproduct_recursion} for crystallographic root systems of finite type.  These coefficients are also the Schubert structure constants of the $T$-equivariant cohomology ring of the flag variety $G/B$ given in Theorem~\ref{thm:equiv_cohomology}.  The implementation of this code uses a certain coarsening of the formula given in~\eqref{eqn:BS_coeff} in combination with the recursion given in Theorem~\ref{thm:coproduct_recursion}.

Let $w,u,v\in W$ such that $m=\ell(w)$ and let $\ii, \jj, \kkk$ denote sequences corresponding to reduced words of $w,u,v$ respectively.  Define the coefficient
\[p_{\jj,\kkk}^{\ii}:=\sum_{E_1,E_2\subseteq[m]} p_{E_1,E_2}^\ii\]
where $p_{E_1,E_2}^\ii$ is defined in~\eqref{eqn:BS_coeff} and the sum is taken over all pairs $(E_1,E_2)\subseteq [m]^2$ such that $\ii_{E_1}=\jj$ and $\ii_{E_2}=\kkk$.  The next corollary follows from formulas~\eqref{Eqn:BS_coeff_operator} and~\eqref{eqn:BS_coeff}:

\begin{cor}
For any $w,u,v\in W$, and let $\ii$ be a sequence corresponding to a reduced word of $w$.  Then

\begin{equation}\label{eqn:coeff_SageMath}
p_{u,v}^w=\sum p_{\jj,\kkk}^{\ii}
\end{equation}
where the sum is over all sequences $\jj,\kkk$ corresponding to reduced words of $u,v$ respectively.

\smallskip

Furthermore, let $\ii',\jj',\kkk'$ denote subsequences of $\ii,\jj,\kkk$ obtained by removing the first entries $i_1, j_1$ and $k_1$ respectively.  Then $p_{\emptyset,\emptyset}^{\emptyset}=1$ and
\[
p_{\jj,\kkk}^\ii=\begin{cases}
Y_{i_1}\circ(p_{\jj,\kkk}^{\ii'})&\text{if $i_1\neq j_1$ and $i_1\neq k_1$}\\
Y_{i_1}\circ(p_{\jj,\kkk}^{\ii'})+s_{i_1}(p_{\jj',\kkk}^{\ii'})&\text{if $i_1=j_1$ and $i_1\neq k_1$}\\
Y_{i_1}\circ(p_{\jj,\kkk}^{\ii'})+s_{i_1}(p_{\jj,\kkk'}^{\ii'})&\text{if $i_1\neq j_1$ and $i_1= k_1$}\\
Y_{i_1}\circ(p_{\jj,\kkk}^{\ii'})+s_{i_1}(p_{\jj',\kkk}^{\ii'}+p_{\jj,\kkk'}^{\ii'})+\al_{i_1}s_{i_1}(p_{\jj',\kkk'}^{\ii'})&\text{if $i_1=j_1=k_1$.}
\end{cases}
\]
\end{cor}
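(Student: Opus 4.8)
The plan is to deduce both assertions directly from the definitions~\eqref{Eqn:BS_coeff_operator} and~\eqref{eqn:BS_coeff}. For~\eqref{eqn:coeff_SageMath} I would merely regroup the sum in~\eqref{eqn:BS_coeff}: there the pairs $(E_1,E_2)$ range over all subsets of $[m]^2$ for which $\ii_{E_1},\ii_{E_2}$ are reduced expressions of $u,v$, and collecting these pairs according to the reduced words $\jj=\ii_{E_1}$ and $\kkk=\ii_{E_2}$ they produce gives $p_{u,v}^w=\sum_{\jj,\kkk}\big(\sum_{\ii_{E_1}=\jj,\,\ii_{E_2}=\kkk}p_{E_1,E_2}^{\ii}\big)=\sum_{\jj,\kkk}p_{\jj,\kkk}^{\ii}$, which is exactly~\eqref{eqn:coeff_SageMath}.

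For the recursion, the key point is that the operator product defining $p_{E_1,E_2}^{\ii}=B_1B_2\cdots B_m(1)$ factors as $B_1\big(p_{E_1',E_2'}^{\ii'}\big)$, where $\ii'=(i_2,\ldots,i_m)$ and $E_1',E_2'\subseteq[m-1]$ are obtained from $E_1\setminus\{1\},E_2\setminus\{1\}$ by shifting indices down by one; this holds because each $B_j$ with $2\le j\le m$ depends only on $i_j$ and on the membership of $j$ in $E_1,E_2$, all of which are preserved by the shift. I would then split the sum defining $p_{\jj,\kkk}^{\ii}$ into four parts according to whether $1\in E_1$ and whether $1\in E_2$. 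If $1\in E_1$ then necessarily $i_1=j_1$ and $\ii'_{E_1'}=\jj'$, while if $1\notin E_1$ then $\ii'_{E_1'}=\jj$; analogously for $E_2$ and $\kkk$. In each of the four cases the leading operator is read off the definition of $B_1$ (it is $Y_{i_1}\circ(-)$ when $1\notin E_1\cup E_2$, it is $s_{i_1}(-)$ when $1$ lies in exactly one of $E_1,E_2$, and it is $\al_{i_1}s_{i_1}(-)$ when $1\in E_1\cap E_2$), and summing the remaining factor over the shifted pairs yields one of $p_{\jj,\kkk}^{\ii'}$, $p_{\jj',\kkk}^{\ii'}$, $p_{\jj,\kkk'}^{\ii'}$, $p_{\jj',\kkk'}^{\ii'}$.

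Assembling these contributions, and recording which of them can actually occur for a given triple $(i_1,j_1,k_1)$---for instance $i_1\neq j_1$ forbids $1\in E_1$ and hence kills every term involving $\jj'$---reproduces exactly the four-way case split in the statement. The base case is immediate: when $m=0$ the only admissible choice is $E_1=E_2=\emptyset$ and the empty operator product applied to $1$ equals $1$, so $p_{\emptyset,\emptyset}^{\emptyset}=1$.

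I do not anticipate a genuine obstacle, since the corollary is a bookkeeping consequence of the two earlier formulas. The only step demanding a moment's care is the index-shift compatibility underlying the factorization $p_{E_1,E_2}^{\ii}=B_1\big(p_{E_1',E_2'}^{\ii'}\big)$, which should be checked once; everything else is routine.
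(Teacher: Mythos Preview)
Your proposal is correct and is exactly the approach the paper indicates: it states only that the corollary ``follows from formulas~\eqref{Eqn:BS_coeff_operator} and~\eqref{eqn:BS_coeff}'' without further detail, and you have supplied precisely those details---regrouping~\eqref{eqn:BS_coeff} by the reduced words $\jj,\kkk$ for the first part, and peeling off the leading operator $B_1$ in~\eqref{Eqn:BS_coeff_operator} for the recursion.
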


In the proceeding SageMath code, the function \lstinline{Schub(w,u,v)} computes the coefficient $p_{\jj,\kkk}^\ii$ where $\ii,\jj,\kkk$ are fixed reduced words of $w,u,v$ respectively.  The function \lstinline{Schub_complete(w,u,v)} computes the coefficient $p_{u,v}^w$ by taking the sum of all $p_{\jj,\kkk}^\ii$ where $\jj, \kkk$ range over all reduced words of $u,v$ as in formula~\eqref{eqn:coeff_SageMath}.  This code was written using SageMath 9.6.  This version includes the class \lstinline{sage.combinat.root_system.root_system.RootSystem} which is required for this calculation.

\bigskip

\begin{lstlisting}[language=python,backgroundcolor=\color{yellow!10},frame=tlb]
Cartan=['A',8]  #Here you can input any finite Lie type and rank.
R=RootSystem(Cartan)
space = R.root_lattice()
alpha = space.simple_roots()
s = space.simple_reflections()
n = len(space.basis())
Sym=PolynomialRing(QQ, 'a', n)
W = WeylGroup(Cartan, prefix="s")

#Note that roots are labeled alpha[1], alpha[2], alpha[3]..., but the corresponding root variables in Sym are labled a0, a1, a2...

#notation for variables of Sym.
def al(k):
    return(Sym.gens()[k-1])

#converts a linear combination of roots to a linear combination of variables in Sym.
def root2var(X):
    Y=Sym(0) #this is zero in Sym
    for i in range(n):
        Y=Y+X.coefficient(i+1)*al(i+1)
    return(Y)

#converts a linear combination of variables in Sym to a linear combination of roots.
def var2root(Y):
    X=space(0) #this is zero in the root lattice
    for i in range(n):
        X=X+int(Y.coefficient(al(i+1))*alpha[i+1]
    return(X)

#returns the action of the simple reflection s_i on a tuple of roots.
def simple_action_tuple(i,tup):
    List=[]
    for T in tup:
        List.append(root2var(s[i](var2root(T))))
    return(tuple(List))

#returns the polynomial s_i(f) where s_i is a simple reflection.
def simple(f,i):
    if f==0:
        return(0)
    else:
        return(f(simple_action_tuple(i,Sym.gens())))

#returns the polynomial Y_i(f) where Y_i=(s_i-1)/alpha_i (Demazure operator).
def Dem(f,i):
    if f==0:
        return(0)
    else:
        return((simple(f,i)-f)/al(i))

#returns the Schubert coefficient p_{u,v}^w where w,u,v are fixed reduced words.
def Schub(w,u,v):
    if ((w!=u and v==[]) or (w!=v and u==[]) or (len(w)<len(u)) or (len(w)<len(v))):
        return(0)
    if ((w==u and v==[]) or (w==v and u==[])):
        return(Sym(1)) #this is 'one' in Sym
    k=w[0]
    if (k!=v[0] and k!=u[0]):
        return(Dem(Schub(w[1:],u,v),k))
    if (k==u[0] and k!=v[0]):
        return(Dem(Schub(w[1:],u,v),k)+simple(Schub(w[1:],u[1:],v),k))
    if (k==v[0] and k!=u[0]):
        return(Dem(Schub(w[1:],u,v),k)+simple(Schub(w[1:],u,v[1:]),k))
    if (k==v[0] and k==u[0]):
        return(Dem(Schub(w[1:],u,v),k)+simple(Schub(w[1:],u,v[1:])+Schub(w[1:],u[1:],v),k)+al(k)*simple(Schub(w[1:],u[1:],v[1:]),k))

#returns the complete Schubert coefficient p_{u,v}^w.  Here, w is a fixed reduced word and we take sum of Schub(w,u,v) where the sum is over all reduced words of u,v.
def Schub_complete(w,u,v):
    ulist=(W.from_reduced_word(u)).reduced_words()
    vlist=(W.from_reduced_word(v)).reduced_words()
    Total=Sym(0) #this is 'zero' in Sym
    for x in ulist:
        for y in vlist:
            Total=Total+Schub(w,x,y)
    return(Total)

#Examples:

Input: Schub_complete([1,5,4,3,2,3,4],[2,4],[4,3,2,3,4])
Output: 2

Input: Schub_complete([2,4,3,2], [2,3,2],[4,3,2])
Output: a1*a2 + a2^2 + a1*a3 + 2*a2*a3 + a3^2
\end{lstlisting}

\bibliographystyle{alpha}

\end{document}